\newcommand{\partialt}{\frac{\partial}{\partial t}}
\newcommand{\partialb}{\bar \partial}
\newcommand{\Ric}{\operatorname{Ric}}
\newcommand{\OPE}{\mathcal{O}_{\mathbb{P}(E)}(-1)}
\def\paragraph{\@startsection{paragraph}{4}%
	\z@\z@{-\fontdimen2\font}%
	{\normalfont\bfseries}}
\newcommand{\definetitlefootnote}[1]{%
	\newcommand\addtitlefootnote{%
		\makebox[0pt][l]{$^{*}$}%
		\footnote{\protect\@titlefootnotetext}
	}%
	\newcommand\@titlefootnotetext{\spaceskip=\z@skip $^{*}$#1}%
}
\begin{document}
	
	\newtheorem{lemma}{Lemma}[section]
	\newtheorem{prop}[lemma]{Proposition}
	
	\newtheorem{theorem}[lemma]{Theorem}
	\newtheorem{corollary}[lemma]{Corollary}
	\newtheorem{theoremintro}{Theorem}
	\theoremstyle{definition}
	\newtheorem{defi}[lemma]{Definition}
	\newtheorem{example}[lemma]{Example}
	\newtheorem*{notation}{Notation}
	\newtheorem*{claim}{Claim}
	
	\newtheorem{remark}[lemma]{Remark}
	

	\title[Existence and Uniqueness of K\"ahler-Ricci solitons]{Existence and Uniqueness of $S^1$-invariant K\"ahler-Ricci solitons} 
	
	\author{Johannes Sch\"afer}

	\begin{abstract}
		We use the momentum construction for $S^1$-invariant K\"ahler metrics as developed by Hwang-Singer to construct new examples of steady K\"ahler-Ricci solitons.  We also prove that these solitons are unique in their K\"ahler class, provided the vector field and the asymptotic behavior are fixed. 
	\end{abstract}
	
	\maketitle
	

	\section{Introduction}

A \textit{steady K\"ahler-Ricci soliton} is a K\"ahler manifold $(M,g)$ whose K\"ahler form $\omega$ satisfies
\begin{align}\label{soliton equation}
	\Ric(\omega) = - \mathcal{L}_X \omega 
\end{align}
for some  vector field $X$ which is the real part of a holomorphic vector field.  Solutions to (\ref{soliton equation}) are natural generalizations of Ricci-flat metrics and arise as self-similar solutions to Ricci flow.

If the vector field $X$ is non-zero, the manifold must be non-compact \cite{ivey1993ricci}. In general,  there is no classification for steady K\"ahler-Ricci solitons available and only few examples are known. Even if a manifold admits a K\"ahler-Ricci  soliton, it is  not understood which subset of the K\"ahler cone contains further examples of Ricci solitons.  It is also not clear, how many solitons there are in each K\"ahler class.

All known examples with $X\neq 0$ are divided into two classes. One class contains explicitly constructed solutions by using ODE methods  (\cite{hamilton1988ricci}, \cite{Caosoliton}, \cite{chave1996class}, \cite{pedersen1999quasi}, \cite{feldman2003rotationally}, \cite{li2010rotationally}, \cite{dancer2011ricci},\cite{futaki2011constructing}, \cite{yang2012characterization}), while the other examples are obtained by using PDE gluing methods (\cite{Biquard17}). The explicit examples are constructed  on Euclidean space or on holomorphic vector bundles over K\"ahler manifolds, while the gluing method produces solitons on certain crepant resolutions of  orbifolds $\mathbb{C}^n /G$. 

In this article,  we use the momentum construction introduced by Hwang-Singer \cite{hwang2002momentum} to find new examples of steady K\"ahler-Ricci solitons.
More precisely, we prove the following theorem. 
\begin{theorem} \label{existence theorem line bundles}
	Let $\pi: K_M \to (M, g_M)$ be   the canonical line bundle over a compact K\"ahler manifold. Assume  that the Ricci form  of $g_M$ is positive semi-definite and has constant eigenvalues  with respect to $g_M$. Then $K_M$ admits a 1-parameter family of complete steady  K\"ahler-Ricci solitons in the K\"ahler class $[\pi^*\omega_M]$.
\end{theorem}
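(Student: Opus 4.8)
The plan is to realise every member of the family by the Hwang--Singer momentum construction for $S^1$-invariant Kähler metrics on the total space of $K_M$, with the soliton field taken to be a real multiple of the generator $Z$ of the fibrewise $\mathbb C^{*}$-action. Give $K_M$ the Hermitian metric induced by $g_M$; then the $S^1$-moment map $\tau$ vanishes exactly on the zero section, and a momentum metric restricts at a level $\{\tau=\mathrm{const}\}$ to the pullback of $\omega_M+\tau\,\Ric(g_M)$. Because $\Ric(g_M)\ge 0$ has constant eigenvalues $\lambda_1,\dots,\lambda_n$, these forms are Kähler for every $\tau\ge 0$ and $(\omega_M+\tau\,\Ric(g_M))^{n}=\bigl(\textstyle\prod_i(1+\tau\lambda_i)\bigr)\omega_M^{n}$ with a coefficient depending on $\tau$ alone. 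The construction then puts $S^1$-invariant Kähler metrics on $K_M$ which lie in $[\pi^{*}\omega_M]$, extend smoothly across the zero section, and are complete at infinity in bijection with momentum profiles $\varphi\colon[0,\infty)\to\mathbb R$ satisfying $\varphi>0$ on $(0,\infty)$, $\varphi(0)=0$, $\varphi'(0)=1$, and $\int^{\infty}d\tau/\sqrt{\varphi}=\infty$.

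Next I reduce the soliton equation to an ODE. Take $X=\operatorname{Re}(cZ)$ with $c\in\mathbb R$; then $X$ is $S^1$-invariant, its flow scales the fibres of $K_M$, and since $X$ is the real part of a holomorphic field one computes $\mathcal L_X\omega=\tfrac i2\partial\bar\partial(c\tau)$. For the Ricci form, the total space of $K_M$ has holomorphically trivial canonical bundle, so $\Ric(\omega)=-\tfrac i2\partial\bar\partial$ of a globally defined function; writing out the volume density and using the constant-eigenvalue hypothesis to separate the $M$-dependence from the $\tau$-dependence gives $\Ric(\omega)=-\tfrac i2\partial\bar\partial\log\bigl(\varphi(\tau)\textstyle\prod_i(1+\tau\lambda_i)\bigr)+\Ric(g_M)$. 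Matching the base and fibre components of $\Ric(\omega)=-\mathcal L_X\omega$ — the fibre component being a differential consequence of the base one — collapses the equation to
\[
\varphi'(\tau)=1+c\,\varphi(\tau)-\varphi(\tau)\sum_i\frac{\lambda_i}{1+\tau\lambda_i},\qquad\varphi(0)=0.
\]

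This linear ODE is solved with the integrating factor $e^{-c\tau}\prod_i(1+\tau\lambda_i)$:
\[
\varphi(\tau)=\frac{e^{c\tau}}{\prod_i(1+\tau\lambda_i)}\int_0^{\tau}e^{-cs}\prod_i(1+s\lambda_i)\,ds.
\]
This is automatically positive on $(0,\infty)$, with $\varphi(0)=0$ and $\varphi'(0)=1$, so it satisfies the conditions for a metric extending smoothly across the zero section and lying in $[\pi^{*}\omega_M]$. The remaining requirement — completeness at infinity — is the decisive one: an integration by parts gives $\varphi(\tau)\to-1/c$ as $\tau\to\infty$ when $c<0$, so $\int^{\infty}d\tau/\sqrt{\varphi}$ diverges and the metric is complete (the base directions merely expand and cannot obstruct this), while for $c>0$ the profile grows exponentially and the metric is incomplete; the endpoint $c=0$ gives $\varphi(\tau)\sim(\mathrm{const})\,\tau$, a complete metric with $X=0$, i.e.\ the Calabi Ricci-flat metric. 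Hence $c\in(-\infty,0]$ parametrises a genuine $1$-parameter family of complete steady Kähler--Ricci solitons on $K_M$, all lying in $[\pi^{*}\omega_M]$, establishing the theorem.

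The main work, and where the hypotheses are used, is twofold. The separation of variables yielding the clean form of $\Ric(\omega)$ needs the Ricci eigenvalues of $g_M$ to be constant: otherwise $\Ric(\omega)$ carries genuine base-direction Hessian terms and the equation does not become an ODE. And one must run the boundary analysis of the explicit $\varphi$ carefully, checking simultaneously that the unique solution of the soliton ODE meets the zero-section conditions $\varphi(0)=0$, $\varphi'(0)=1$, that $\tau$ sweeps out all of $[0,\infty)$ so that no extra end is created, and that it satisfies the completeness condition at $\tau\to\infty$ — which is exactly what forces $c\le 0$.
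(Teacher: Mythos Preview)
Your argument is correct and follows essentially the same route as the paper: Calabi's ansatz in the Hwang--Singer momentum formulation, reduction of the soliton equation to the linear ODE $\varphi'+(Q'/Q-\mu)\varphi=1$ with $Q(\tau)=\prod_i(1+\tau\lambda_i)$, the explicit integral solution, and the boundary/completeness analysis via $\varphi(0)=0$, $\varphi'(0)=1$, and $\varphi(\tau)\to -1/c$ for $c<0$. The only notable differences are cosmetic: you compute $\Ric(\omega)$ by exploiting the holomorphic triviality of $K_{K_M}$ rather than quoting Hwang--Singer's formula, and you include the limiting Ricci-flat case $c=0$ in the family, whereas the paper parametrises by $\mu<0$ only.
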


Theorem \ref{existence theorem line bundles} generalises results obtained in \cite{Caosoliton}, \cite{chave1996class}, \cite{pedersen1999quasi}, \cite{feldman2003rotationally}, \cite{dancer2011ricci} and \cite{yang2012characterization}. The main difference is that we do \textit{not} assume $(M,g_M)$ to be a K\"ahler-Einstein Fano manifold, but only require that $\operatorname{Ric}(\omega_M)$ has constant eigenvalues.

Under the same assumption, Hwang-Singer \cite{hwang2002momentum} used Calabi's ansatz  to construct K\"ahler-Einstein metrics on line bundles. They observed that the constancy of eigenvalues is sufficient to reduce the K\"ahler-Einstein equation to a single ODE, which is linear after applying a certain transformation. We prove Theorem \ref{existence theorem line bundles} by adapting their construction to the case of steady K\"ahler-Ricci solitons.

Theorem \ref{existence theorem line bundles} produces new examples if the base $M$ is a flag variety. More concretely, consider  the canonical bundle over $M=\mathbb{P}(T^*\mathbb{CP}^n)$, the projectivization of the cotangent bundle $T^*\mathbb{CP}^n$. Previously, it was only known that compactly supported K\"ahler classes admit steady solitons (\cite{pedersen1999quasi}, \cite{dancer2011ricci}, \cite{yang2012characterization}), whereas Theorem \ref{existence theorem line bundles} shows  they sweep out the \textit{entire} K\"ahler cone.

Another interesting feature of Hwang-Singer's construction is that it can also be applied to certain vector bundles of rank $\geq2$. Then we obtain a result analogue to Theorem \ref{existence theorem line bundles}. 

\begin{theorem} \label{existence theorem VB}
	Let $\pi : E \to D$ be a holomorphic vector bundle of rank $m$ over a compact K\"ahler manifold $(D,\omega_D)$. Assume that $E$ admits a Hermitian metric $h$ such that the corresponding  curvature form $\gamma$ of the tautological bundle $(\mathcal{O}_{\mathbb{P}(E)} (-1),h )$ is negative semi-definite and has constant eigenvalues with respect to the K\"ahler metric $\omega_M= p^*\omega_D -\gamma$, where $p:M=\mathbb{P}(E) \to D$ is the natural projection. Additionally, suppose that 
	\begin{align} \label{topological restriction on gamma and ricM}
	\Ric(\omega_M) = - m \gamma.
	\end{align}
	Then $E$ admits a 1-parameter family of complete steady K\"ahler-Ricci solitons in the class $[\pi^* \omega_D]$. 
\end{theorem}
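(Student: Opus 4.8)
The plan is to reduce Theorem~\ref{existence theorem VB} to a momentum construction on a line bundle, run in parallel with the proof of Theorem~\ref{existence theorem line bundles}, and then to check that the metric so produced actually lives on the total space $E$.

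First I would replace $E$ by the tautological line bundle $L := \mathcal{O}_{\mathbb{P}(E)}(-1) \subset p^*E$ over $M = \mathbb{P}(E)$. The total space of $L$ is the blow-up of the total space $E$ along its zero section $D$, and blowing down the exceptional divisor --- which is exactly the zero section of $L \to M$ --- gives a proper holomorphic map $b : L \to E$ that restricts to a biholomorphism $L \setminus M \xrightarrow{\ \sim\ } E \setminus D$ and intertwines the fibrewise $S^1$-actions. Under $b$ the class $[\pi^*\omega_D]$ pulls back to the pull-back to $L$ of $[p^*\omega_D]$. Hence it suffices to construct, on $L \setminus M$, a complete $S^1$-invariant steady K\"ahler--Ricci soliton in the class $[p^*\omega_D]$ whose metric is the pull-back under $b$ of a smooth K\"ahler metric on $E$. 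Following Hwang--Singer, I would look for a metric of momentum-ansatz form: fix a Hermitian metric on $L$ with curvature $\gamma$, write $t$ for the logarithm of the squared fibre-norm, take the K\"ahler potential to be $p^*(\text{a potential for }\omega_D) + U(t)$, and pass to the moment coordinate $\tau = U'(t)$, which runs over an interval $(0,b)$ with momentum profile $\varphi(\tau) = U''(t) > 0$. The resulting form is $p^*\omega_D - \tau\gamma$ plus a fibre term with coefficient $\varphi$; since $\omega_M = p^*\omega_D - \gamma$ is K\"ahler and $-\gamma$ is semi-positive, one checks that $p^*\omega_D - \tau\gamma > 0$ for all $\tau > 0$ and degenerates, as $\tau \to 0$, exactly along the $\mathbb{P}^{m-1}$-fibres of $b|_M$. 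Thus for a suitable boundary behaviour of $\varphi$ at $\tau = 0$ the metric is the $b$-pull-back of a smooth metric on $E$, and because it restricts to $\omega_D$ on the zero section of $E$ it automatically represents $[\pi^*\omega_D]$.

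Next I would reduce the soliton equation \eqref{soliton equation} to an ODE. I take $X$ to be the real part of the holomorphic generator of the fibrewise $\mathbb{C}^*$-action, rescaled by a real parameter $a$; then $\mathcal{L}_X \omega_\varphi$ equals $a\,dd^c\tau$ up to a constant, which is again of momentum-ansatz shape (a $\tau$-dependent multiple of $\gamma$ plus a fibre term). On the other side $-\Ric(\omega_\varphi) = dd^c\log\det\omega_\varphi$, and here the two hypotheses enter decisively: the constancy of the eigenvalues of $-\gamma$ with respect to $\omega_M$ (they lie in $[0,1]$, with the value $1$ along the $\mathbb{P}^{m-1}$-directions) makes $\log\det_M(p^*\omega_D - \tau\gamma)$ a function of $\tau$ alone plus $\log\det_M\omega_M$, and the assumption $\Ric(\omega_M) = -m\gamma$ converts the leftover base contribution $-dd^c\log\det_M\omega_M = \Ric(\omega_M)$ into a constant multiple of $\gamma$. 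Comparing coefficients of $\gamma$ and of the fibre term --- the two comparisons being consistent since both sides are $dd^c$ of functions of $\tau$ --- collapses \eqref{soliton equation} to a single scalar ODE for $\varphi(\tau)$, which, as in Hwang--Singer's K\"ahler--Einstein computation, becomes linear after a change of variables and can be integrated explicitly.

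The real work, and what I expect to be the main obstacle, is the qualitative analysis of the solutions $\varphi$: for each admissible $a$ one must produce a solution that stays positive on the whole interval, has precisely the boundary behaviour at $\tau \to 0$ that makes $\omega_\varphi$ extend \emph{smoothly} across the rank-$m$ zero section (the condition distinguishing the flat metric on $\mathbb{C}^m$ from a metric on $\mathcal{O}_{\mathbb{P}^{m-1}}(-1)$, which I would extract from the local model and which fixes the remaining integration constant), and has the growth at the far end of the interval that makes the metric complete on $E$. The last point is the delicate one, since for steady solitons --- unlike in the K\"ahler--Einstein case --- the behaviour at infinity is non-standard and completeness does not follow automatically from positivity of $\varphi$; controlling it is where the adaptation of the Hwang--Singer argument requires genuinely new input. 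Putting these together yields the asserted $1$-parameter family of complete steady K\"ahler--Ricci solitons on $E$.
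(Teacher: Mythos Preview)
Your proposal is correct and follows essentially the same route as the paper: reduce to the tautological line bundle $\mathcal{O}_{\mathbb{P}(E)}(-1)$ over $M=\mathbb{P}(E)$, run the momentum construction with background $\omega_M=p^*\omega_D-\gamma$, use constancy of the eigenvalues together with $\Ric(\omega_M)=-m\gamma$ to collapse the soliton equation to the linear ODE $\varphi'+(Q'/Q-\mu)\varphi=m$, solve it explicitly, and then check the boundary behaviour needed for smooth extension across the zero section of $E$ and completeness at infinity.

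One point of emphasis worth correcting: you locate the ``genuinely new input'' at the far end (completeness), but in the paper this is entirely routine---the explicit solution has the expansion $\varphi(\tau)=-m/\mu+O(1/\tau)$, and Proposition~\ref{completness proposition}(ii) applies verbatim. The actual technical wrinkle peculiar to the vector bundle case is at $\tau=0$: here $Q(\tau)=\tau^{m-1}\hat Q(\tau)$ vanishes to order $m-1$, so the integral formula $\varphi(\tau)=e^{\mu\tau}Q(\tau)^{-1}\int_0^\tau m\,e^{-\mu x}Q(x)\,dx$ is a $0/0$ expression and one must check by hand that $\varphi$ and all its derivatives extend smoothly with $\varphi(0)=0$, $\varphi'(0)=1$; the paper does this via the explicit series for $\nu(\tau)$, and the extension criterion itself is again the line-bundle one (Hwang--Singer show it carries over unchanged).
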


This can be applied to certain sums of line bundles and again, if the base is a flag variety, it constructs steady solitons in each K\"ahler class,  generalising  results in \cite{li2010rotationally} and \cite{dancer2011ricci}[Theorem 4.20].

Given a K\"ahler-Ricci soliton, it is an interesting question  whether or not it is unique in its K\"ahler class. It is natural to fix a vector field for this question because there can be families of solitons as in Theorem 1.1 and 1.2 for instance. In general, this question seems to be largely open.

In the special case of  Ricci-flat K\"ahler metrics, the  question of uniqueness is studied under additional assumptions on the asymptotic behaviour of the metric (\cite{joyce2000compact}, 
\cite{goto2012calabi}, \cite{HAJOricciflat}, \cite{haskins2015asymptotically}).  For example,  asymptotically conical Ricci-flat  metrics are unique in their K\"ahler class  \cite{HAJOricciflat}.

In the different setting of solitons with $X\neq 0$, there are only few results such as \cite{Biquard17}. Assuming that two steady solitons $\omega_1,\omega_2$ with the same vector field are related by $\omega_1= \omega_2 + \sqrt{-1} \partial \partialb u$, \cite{Biquard17}[Proposition 1.2] shows that $\omega_1=\omega_2$ provided $u$ and its derivatives tend to zero at infinity. 

In this work, we  extend the previous result for the metrics constructed in Theorem \ref{existence theorem line bundles} and \ref{existence theorem VB}. 

\begin{theorem}\label{uniqueness theorem in intro}
Let $E\to D$ be a holomorphic vector bundle satisfying the assumptions in Theorem \ref{existence theorem line bundles} or \ref{existence theorem VB} and denote  the steady K\"ahler-Ricci solitons constructed in Theorem \ref{existence theorem line bundles} or \ref{existence theorem VB} by $\omega_{\varphi}$. Suppose that $\omega$ is a K\"ahler-Ricci soliton on $E$ with the same vector field as $\omega_{\varphi}$ such that  $[\omega]=[\omega_{\varphi}] \in H^2 (E)$. If moreover $\omega_{\varphi}-\omega \in C_{-\delta}^{\infty}(\Lambda^2 T^*E)$ for some $\delta>2$, then $\omega_{\varphi}= \omega$. 	
\end{theorem}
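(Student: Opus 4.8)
The plan is to reduce the equality $\omega=\omega_{\varphi}$ to a scalar uniqueness statement and then apply the maximum principle. Write $n=\dim_{\mathbb C}E$. Since $[\omega]=[\omega_{\varphi}]\in H^2(E)$ and $\omega_{\varphi}-\omega\in C_{-\delta}^{\infty}(\Lambda^2T^*E)$, the difference $\omega-\omega_{\varphi}$ is a decaying, $d$-exact, real $(1,1)$-form. I would first establish a weighted $\partialb$-lemma on the complete manifold $(E,\omega_{\varphi})$: such a form can be written as $\omega-\omega_{\varphi}=\sqrt{-1}\,\partial\partialb v$ for a real potential $v$ lying two weights better, $v\in C_{2-\delta}^{\infty}$. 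Here the hypothesis $\delta>2$ enters decisively: it guarantees that $v\to0$ at infinity, and (since $\nabla v=O(\rho^{1-\delta})$) that $Xv\to0$ as well, even if the soliton vector field $X$ grows at most linearly — as it does for the momentum-constructed metrics.

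Next I subtract the two soliton equations. Using that $X$ is the real part of a holomorphic vector field, so $\mathcal L_X$ commutes with $\partial$ and $\partialb$ and $\mathcal L_X v=Xv$, one gets
\[ \Ric(\omega)-\Ric(\omega_{\varphi})=-\mathcal L_X(\omega-\omega_{\varphi})=-\sqrt{-1}\,\partial\partialb(Xv),\]
while on the other hand $\Ric(\omega)-\Ric(\omega_{\varphi})=-\sqrt{-1}\,\partial\partialb\log\!\frac{\omega^n}{\omega_{\varphi}^n}$. Hence $h:=\log\!\frac{\omega^n}{\omega_{\varphi}^n}-Xv$ satisfies $\sqrt{-1}\,\partial\partialb h=0$, so $h$ is pluriharmonic, in particular $\omega_{\varphi}$-harmonic. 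By the decay arranged above, $h\to0$ at infinity, so the maximum principle on the complete manifold $(E,\omega_{\varphi})$ forces $h\equiv0$. We are left with the scalar complex Monge–Ampère equation $(\omega_{\varphi}+\sqrt{-1}\,\partial\partialb v)^n=e^{Xv}\,\omega_{\varphi}^n$ with $v\to0$ at infinity.

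For uniqueness of this equation, note that $v$ and the zero function are both solutions; subtracting and using concavity of $\log\det$, write $\omega_t:=(1-t)\,\omega_{\varphi}+t\,\omega$, which is K\"ahler for all $t\in[0,1]$ by convexity of positivity, and set $\mathcal Lw:=\int_0^1\trace_{\omega_t}(\sqrt{-1}\,\partial\partialb w)\,dt-Xw$. Then $\mathcal Lv=0$, and $\mathcal L$ is a linear second-order elliptic operator with positive-definite leading term, first-order term $-X$, and \emph{no} zeroth-order term. Since $v$ is continuous and tends to $0$ at infinity, if $v\not\equiv0$ it attains a nonzero interior extremum; the strong (Hopf) maximum principle for $\mathcal L$ then forces $v$ to be constant on the connected manifold $E$, hence $v\equiv0$ — a contradiction. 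Therefore $v\equiv0$ and $\omega=\omega_{\varphi}$.

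The main obstacle is the first step, the weighted $\partialb$-lemma: one must show that the decaying exact $(1,1)$-form $\omega-\omega_{\varphi}$ admits a decaying $\sqrt{-1}\,\partial\partialb$-potential. This requires identifying the asymptotic geometry of the Hwang–Singer momentum metrics $\omega_{\varphi}$ precisely enough to run weighted elliptic (Fredholm) theory on the end of $E$ — solving a $\partialb$-equation with estimates and ruling out cohomological obstructions at the relevant weight — with $\delta>2$ chosen to avoid the critical indicial roots and to ensure the potential decays. The remaining pieces (the commutation identities for $\mathcal L_X$, the elementary maximum-principle arguments, and the growth bound on $X$) are routine by comparison.
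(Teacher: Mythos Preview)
Your proposal is correct and follows the same two-step strategy as the paper: first establish a weighted $\partial\partialb$-lemma showing $\omega-\omega_{\varphi}=\sqrt{-1}\,\partial\partialb v$ with $v\in C^{\infty}_{2-\delta}$ (this is the paper's Theorem~\ref{del del bar lemma}, and you rightly flag it as the main obstacle), then conclude via a maximum-principle argument on the resulting scalar equation. The only difference is that the paper outsources the second step to \cite{Biquard17}[Proposition~1.2], whereas you unpack that argument by hand (linearizing the Monge--Amp\`ere equation and applying the strong maximum principle); incidentally, for these metrics $|X|_{g_\varphi}^2=\mu^2 f''(t)$ is in fact bounded, not merely of linear growth.
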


We reduce the proof of Theorem \ref{uniqueness theorem in intro} to \cite{Biquard17}[Proposition 1.2] by proving a $\partial \partialb$-Lemma with controlled growth. Assuming that $\omega_{\varphi}-\omega$ is asymptotic to zero, in a suitable sense, we show that there exists a smooth function $u $ such that $\omega_{\varphi}-\omega =  \sqrt{-1} \partial \partialb u$ and $u\in C^{\infty}_{-\delta+2 }(E)$, i.e. $u$ and all its derivatives tend to zero because $2-\delta<0$.

The strategy for finding such a function $u$ is analogue to \cite{HAJOricciflat}[Section 3]. The main point is proving that all harmonic 1-forms of a certain growth behaviour are identically zero which  requires   non-negative Ricci curvature. We will see that this is indeed true for the metrics $\omega_{\varphi}$ constructed in Theorem \ref{existence theorem line bundles} and \ref{existence theorem VB}.

This article is structured as follows. In Section 2, we recall Hwang-Singer's construction of K\"ahler metrics and prove Theorem \ref{existence theorem line bundles}. For proving Theorem \ref{existence theorem VB}, we have to make some adjustments  which are explained in Section 3. The metrics are studied more closely in Section 4. Here we observe in particular that the curvature of these metrics is bounded and that the Ricci curvature is non-negative. Then, in Section 5, we  prove Theorem \ref{uniqueness theorem in intro} by studying the Laplace operator and 
 harmonic 1-forms of the metrics constructed in Theorem \ref{existence theorem line bundles} and \ref{existence theorem VB}. 
 
 After the first version of this paper was uploaded to the arXiv, Conlon and Deruelle \cite{conlon2020steady} posted a preprint on the arXiv containing a new existence result for steady K\"ahler-Ricci solitons. There is some overlap between their main result \cite{conlon2020steady}[Theorem A] and our Theorems \ref{existence theorem line bundles} and  \ref{existence theorem VB}, compare Remarks \ref{Remak I on Conlon and Deruelle} and \ref{Remak II on Conlon and Deruelle} below.

\subsection*{Acknowledgement}
This article is part of the author's PhD thesis. The author is financially supported  by the graduate school \grqq IMPRS on Moduli Spaces" of the Max-Planck-Institute for Mathematics in Bonn  and  would like to thank his advisor, Prof. Ursula Hamenstädt, for her encouragement as well as helpful discussions. Moreover, the author is grateful to Prof. Hans-Joachim Hein for his interest in this work and his comments on earlier versions of this article.

	\section{Calabi's Ansatz for line bundles} \label{section calabis ansatz for line bundles}

Hwang-Singer's construction  combines Calabi's ansatz with ideas from symplectic geometry (\cite{hwang2002momentum}).  If $\pi: (L,h)\to (M,\omega_M)$ denotes a Hermitian holomorphic  line bundle over a K\"ahler manifold, then Calabi's idea (\cite{calabi1979metriques}) was to  search for K\"ahler metrics of the form
\begin{align}\label{calabi ansatz in introduction}
\pi ^* \omega_M + \sqrt{-1} \partial \partialb f(t).
\end{align}
Here,  $t$ denotes the logarithm of the fibre-wise norm function induced by $h$ and  $f$ is a convex function of one variable. Instead of describing the metric (\ref{calabi ansatz in introduction}) in terms of the potential $f$,  Hwang-Singer  introduced a new variable $\tau=\tau(t)$ and a function $\varphi=\varphi(\tau): (0,\infty) \to \mathbb{R}_+$ which is related to the Legendre transformation $F$ of $f$ by $\varphi= 1/F''$. In particular, $\varphi$ determines the metric (\ref{calabi ansatz in introduction}) uniquely.

Assuming that the curvature form of $h$ has constant eigenvalues, we will see in this section that the non-linear K\"ahler-Ricci soliton equation (\ref{soliton equation}) is equivalent to a single, linear ODE in the function $\varphi$, which can be solved explicitly. This leads to a proof of Theorem \ref{existence theorem line bundles}. Additionally, we discuss the main examples to which Theorem \ref{existence theorem line bundles} applies. 

\subsection{Notation and set-up.} We begin by briefly recalling Calabi's construction of K\"ahler metrics in the special case of the canonical bundle.  We follow the presentation in \cite{hwang2002momentum}[Section 2].

Let $(M^n,\omega_M)$ be a K\"ahler manifold of complex dimension $n$ and equip  its canonical  line bundle $\pi: K_M \to M$ with the Hermitian metric $h$ induced by $\omega_M$. Let $\gamma $ be the curvature form of  $h$ and assume that $-\gamma\geq 0$, i.e. $\gamma$ is negative semi-definite. Recall that  $\gamma$ is  given by
\[
\gamma= - \sqrt{-1} \partial \partialb \log h(s,\bar{s})= - \operatorname{Ric}(\omega_M),
\]
where $s:U\to K_M$ is a local holomorphic section of $K_M$ and $\operatorname{Ric}(\omega_M) $ denotes the Ricci form of $\omega_M$.
We introduce the radial function $r: K_M \to \mathbb{R}_{\geq0}$ defined by $r(v)= \sqrt{h(v,\bar{v})}$ and outside the zero section, we define a new function $t:K_M \setminus M \to \mathbb{R}$ by $t=2 \log r$.  The pullback $\pi^*\gamma$ is a $\partial\partialb$-exact form on $K_M\setminus M$ and satisfies
\begin{align}\label{pullback of curvature form}
\pi^* \gamma = - \sqrt{-1} \partial \partialb t. 
\end{align}

 Suppose $f:\mathbb{R} \to \mathbb{R}$ is a smooth function satisfying
  \begin{align}\label{positivity assumption on f}
  \lim _{t \to - \infty} f'(t) =0 \; \text{ and} \; \; f''>0.
  \end{align}
  Then Calabi's Ansatz searches for K\"ahler metrics $\omega$ of the form
  \begin{align} \label{spelling out calabis ansatz}
  \omega=\pi^* \omega_M + \sqrt{-1} \partial \partialb f (t) =\pi ^* \omega_M -f'(t) \pi^* \gamma + f''(t) \sqrt{-1} \partial t \wedge \partialb t.
  \end{align}
  Note that $\omega$ is defined on $K_M \setminus M$, the canonical bundle with the zero section removed, and it is positive since we assumed $-\gamma\geq0$ and (\ref{positivity assumption on f}). Depending on the behaviour of $f(t)$ as $t\to \pm \infty$,  $\omega$ can be extended to all of $K_M$ and define a complete metric. When this can happen is explained in  the next subsection.

    We conclude this subsection by describing the Calabi metric $\omega$ in terms of the Legendre transformation  of its potential $f$, which is well-defined since $f$ is convex by (\ref{positivity assumption on f}).  
   We  now briefly recall this transformation.
  Let $I= \operatorname{Im} f' \subset \mathbb{R}_{+}$ be the image of $f'$ and define the new variable $\tau:= f'(t) \in I$. We write $I=(0, \tau_2)$, which  means that
  \begin{align*}
  \lim_{s\to -\infty} \tau(s) = \lim_{t\to -\infty} f'(t) = 0 , \;\;\; \lim_{s\to + \infty } \tau(s) = \lim_{t\to + \infty} f'(t) = \tau_2.
  \end{align*}
We point out that in general $\tau_2\leq +\infty$, but in the case considered in subsequent sections,  we have in fact that $\tau_2=+\infty$. 
  The Legendre transform $F: I \to \mathbb{R} $ is defined by the formula
  \begin{align*}
  f(t) + F(\tau) =t \tau.
  \end{align*}
  One can check that $F$  is also strictly convex, so that we can define  a new function $\varphi: I\to \mathbb{R}_{+}$ by 
  \begin{align*}
  \varphi(\tau) = \frac{1}{F''(\tau)}.
  \end{align*}
  Then we obtain the following relations
  \begin{align}\label{new relations after transformation}
  \frac{d\tau}{dt}= f''(t)=\varphi(\tau), \; \;\;  f'''(t)= \frac{d \varphi}{dt}  = \varphi'(\tau ) \varphi(\tau).
  \end{align}
  In particular, (\ref{positivity assumption on f}) translates into 
  \begin{align}\label{positivity assumption on varphi and I}
  \varphi >0 \; \; \text{ on } I=(0,\tau_2).
  \end{align}
  We can then express the metric $\omega$     obtained from Calabi's construction (\ref{spelling out calabis ansatz}) as  
  \begin{align} \label{spelling out calabis ansatz in terms of varphi}
  \omega
  = \pi^* \omega_M - \tau \pi ^* \gamma + \frac{1}{\varphi(\tau)} \sqrt{-1} \partial \tau \wedge \partialb \tau
  \end{align}
  by using equations (\ref{spelling out calabis ansatz}) and  (\ref{new relations after transformation}).  
  
  The function $\varphi$ is called the momentum profile of $\omega$. We note that it is possible to reconstruct the K\"ahler potential $f$ of $\omega$ from its momentum profile by
  \begin{align}\label{compute f from varphi}
  f(t)= \int_{0}^{\tau (t)} \frac{xdx}{\varphi(x)}.
  \end{align}
 Hence, the K\"ahler metric given by Calabi's Ansatz (\ref{spelling out calabis ansatz}) is uniquely determined by its momentum profile. We emphasize this by writing $\omega= \omega_{\varphi}$.

  \paragraph{Completeness of $\omega_\varphi$.}
  The K\"ahler metric $\omega= \omega_{\varphi}$ given by (\ref{spelling out calabis ansatz in terms of varphi}) is a priori only defined on $K_M\setminus M$ and is in general not complete. Whether or not $\omega_{\varphi}$ extends across the zero section to a complete metric is determined by the behaviour of the momentum profile $\varphi$ toward the endpoints of $I=(0,\tau_2)$. This is well-understood and there is the following well-known proposition, whose proof can be found in \cite{hwang2002momentum}[Section 2] or \cite{futaki2011constructing}[Section 6], for example.

  	\begin{prop}  \label{completness proposition}
  		Let  $\omega_\varphi$ be given by (\ref{spelling out calabis ansatz in terms of varphi}). 
  		Suppose the profile $\varphi: I \to \mathbb{R}$ has a zero of integer order at each endpoint of $I=(0,\tau_2)$. Then $\omega_\varphi$ extends across the zero section  if and only if $\varphi(0)=0$   and $\varphi' (0)=1$. 
  		
  		In this case, the resulting metric on $K_M$ is complete if and only if
  		at the upper endpoint $\tau_2$, one of the following conditions (i) and (ii) holds:
  			\begin{itemize}
  				\item [(i)] The endpoint $\tau_2$ is finite and $\varphi$ vanishes at least to second order.
  				\item [(ii)] The endpoint $\tau_2$ is infinite and $\varphi$ grows at most quadratically.
  				\end{itemize}

  	\end{prop}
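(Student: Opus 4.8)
The plan is to analyze the metric $\omega_\varphi$ near each endpoint of $I=(0,\tau_2)$ by converting the conditions on $\varphi$ into statements about the geometry in a geodesic coordinate transverse to the level sets of $\tau$. First I would introduce the arc-length-type variable along the fibre direction: from \eqref{spelling out calabis ansatz in terms of varphi} the part of the metric transverse to the level sets of $\tau$ is $\tfrac{1}{\varphi(\tau)}\sqrt{-1}\,\partial\tau\wedge\partialb\tau$, so setting $ds = d\tau/\sqrt{\varphi(\tau)}$ (equivalently, recalling $d\tau/dt=\varphi(\tau)$, so $t$ itself is a natural parameter with $dt = d\tau/\varphi$) measures distances in the fibre. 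The total space near the zero section is, away from $M$, a punctured-disk bundle, and $\tau\to 0$ corresponds to approaching the zero section $M$, while $\tau\to\tau_2$ is the ``end at infinity''. The key computation is: the level set $\{\tau=c\}$ carries the metric $\pi^*\omega_M - c\,\pi^*\gamma$, which for $c$ small is a small perturbation of $\pi^*\omega_M$ pulled back to the unit circle bundle, and its ``size'' in the circle direction is governed by $\varphi$.

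Next I would treat extension across the zero section. Near $\tau=0$, writing the fibre coordinate $w$ with $|w|^2 \sim e^t$ and using \eqref{compute f from varphi}, one checks that $\omega_\varphi$ extends smoothly over $M$ exactly when the induced metric in the $w$-disk is smooth at $w=0$; a standard model computation (e.g.\ comparing with the flat metric $\sqrt{-1}\,dw\wedge d\bar w$ on $\mathbb{C}$, whose momentum profile is $\varphi(\tau)=\tau$ after the analogous reduction) shows smoothness forces $\varphi(0)=0$ and $\varphi'(0)=1$: the vanishing to first order ensures the circle fibres shrink at the linear rate of a smooth point, and the derivative being exactly $1$ normalizes the cone angle to $2\pi$. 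Since we assume $\varphi$ has a zero of integer order at $0$, ``$\varphi(0)=0$ with $\varphi'(0)=1$'' is precisely a simple zero with the right leading coefficient, and conversely this is enough for smooth extension. This gives the first claim.

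For completeness at $\tau_2$, I would estimate the distance from a fixed point to the end by integrating along the $t$-parameter (or $s$-parameter) and separately check that $M$-directions do not provide a ``short cut''. The fibre-direction distance to $\tau_2$ is $\int^{\tau_2} d\tau/\sqrt{\varphi(\tau)}$. If $\tau_2<\infty$: this integral diverges iff $\varphi$ vanishes at $\tau_2$ at least to second order (since $\varphi(\tau)\sim (\tau_2-\tau)^k$ gives $\int (\tau_2-\tau)^{-k/2}d\tau$, divergent iff $k\ge 2$); and one must also confirm that for $k\ge 2$ the level-set sizes stay bounded so no escaping curve has finite length — they do, as $\pi^*\omega_M - \tau\pi^*\gamma$ stays bounded for $\tau$ in the compact $[0,\tau_2]$. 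If $\tau_2=\infty$: now $\int^\infty d\tau/\sqrt{\varphi(\tau)}$ diverges iff $\varphi$ grows at most quadratically (for $\varphi(\tau)\sim \tau^k$, $\int \tau^{-k/2}d\tau$ diverges iff $k\le 2$); if $\varphi$ grows faster than quadratically this radial integral converges and a fibre geodesic reaches infinity in finite time, so $\omega_\varphi$ is incomplete, while if it grows at most quadratically one checks every divergent curve has infinite length. The main obstacle is this last step: ruling out finite-length escaping curves that move simultaneously in the base and fibre directions. I expect to handle it by observing $\omega_\varphi \ge \pi^*\omega_M$ always, and that along any curve escaping to the end the $t$-coordinate must go to $+\infty$, whose induced length is bounded below by $\int dt/\sqrt{\cdots}$ — more carefully, by $\int |dt|$ times the conformal factor $\sqrt{f''(t)} = \sqrt{\varphi(\tau(t))}$ rewritten back in $t$, giving precisely the growth condition — so the quadratic-growth borderline is exactly the threshold between $\int \sqrt{f''}\,dt$ converging and diverging. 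I would cite \cite{hwang2002momentum}[Section 2] or \cite{futaki2011constructing}[Section 6] for the detailed verification that these fibre-direction estimates are not defeated by base-direction motion.
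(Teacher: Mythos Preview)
The paper does not supply its own proof of this proposition: it states that the result is well-known and refers the reader to \cite{hwang2002momentum}[Section 2] and \cite{futaki2011constructing}[Section 6]. Your sketch is exactly the standard argument found in those references---introducing the arc-length variable $ds=d\tau/\sqrt{\varphi(\tau)}$, reading off the cone-angle normalization $\varphi(0)=0$, $\varphi'(0)=1$ from comparison with the flat disk metric, and testing completeness at $\tau_2$ via divergence of $\int^{\tau_2}\! d\tau/\sqrt{\varphi(\tau)}$---so your approach and the cited one coincide.
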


\begin{remark}
	Note that  \cite{hwang2002momentum}[Proposition 2.3] is identical with Pro\-position \ref{completness proposition}, except that Hwang and Singer require $\varphi'(0)=2$ instead of $\varphi'(0)=1$. This  is due to the fact that our K\"ahler potential $f$ is \textit{twice} the potential function used by Hwang and Singer; compare (\ref{spelling out calabis ansatz}) with \cite{hwang2002momentum}[(1.1)].  
\end{remark}

If the metric $\omega_{\varphi}$ extends to the total space of $K_M$, we would like to identify its de Rham cohomology class. Since we assumed (\ref{positivity assumption on varphi and I}), i.e. $I=(0,\tau_2$), it follows immediately that $[\omega_\varphi] = [\pi ^* \omega_M] \in H^2(K_M)$. We refer to the class $[\pi^*\omega_M]$ as the K\"ahler class of $\omega_{\varphi}$. 

More generally, we define a K\"ahler class on $K_M$ simply to be a class in $H^2(K_M)$ containing positive $(1,1)$ forms and the K\"ahler cone is the set of all such K\"ahler classes. Using this definition, the projection map $   \pi^*: H^2(M) \to H^2(K_M) $ identifies the K\"ahler cone of the compact base $M$ with the K\"ahler cone of $K_M$. Indeed, given a K\"ahler form on $K_M$, its restriction to  $M$ clearly is a K\"ahler form on $M$. Conversely, given a K\"ahler form $\omega_M$ on $M$, Calabi's Ansatz always produces a positive $(1,1)$ form in the class $[\pi^*\omega_M]$, for example consider $\omega_\varphi$ with $\varphi(\tau)= \tau $, which extends to $K_M$ by Proposition \ref{completness proposition}.

  \paragraph{The Ricci form.} In this paragraph,
   we  provide a description of the Ricci-form of $\omega_\varphi$. The computations can be found, for example, in \cite{hwang2002momentum}[Section 2.1].
   
Denote the K\"ahler metric of $\omega_M$ by $g_M$ and the curvature form of $(K_M,h)$ by $\gamma$. It gives rise to an endomorphism $B: T^{1,0}M \to T^{1,0}M$ of the holomorphic tangent bundle, which is locally defined by $B:=g_M^{-1} \gamma= g_M^{ \bar{k} i} \gamma_{j\bar k}$. 
As in Theorem \ref{existence theorem line bundles}, we assume from now on that the eigenvalues of $B$ are constant over $M$. This condition is sufficient to reduce the soliton equation (\ref{soliton equation}) to an ODE.
These conditions  guarantee  that the function $Q:I\times M \to \mathbb{R}_{+}$ defined by 
\begin{align}\label{definition Q LB}
Q= \det \left( g_M^{-1} (\omega_M - \tau \gamma ) \right)= \det \left( \operatorname{Id} - \tau B
 \right) 
\end{align}
 only depends on the parameter $\tau$, i.e. is constant over $M$. Also observe that $Q$ is a positive function because $-\gamma\geq 0$ and $\tau\geq0$. $Q$ naturally appears in the computation of $\Ric (\omega_\varphi)$. Indeed,
 the Ricci form is given by
 \begin{align}\label{Ricci form spelled out}
 \Ric(\omega_\varphi) = \pi ^* \Ric(\omega_M) + \frac{(\varphi Q)^{'}}{Q}  \pi ^* \gamma - \frac{1}{\varphi} \left( \frac{(\varphi Q)^{'}}{Q} \right)^{'}  \sqrt{-1} \partial \tau \wedge \partialb \tau,
 \end{align}
 see \cite{hwang2002momentum}[(2.14)].

\subsection{Reduction to an ODE} We use the previously derived formula for the Ricci curvature to show that the K\"ahler-Ricci soliton equation is equivalent to an ODE in the function  $\varphi(\tau)$. Our presentation is similar to \cite{futaki2011constructing}[Section 4].

By definition, the soliton vector field $X$ must be the real part of a holomorphic vectorfield, i.e. $\mathcal{L}_XJ =0$. On the line bundle $K_M$, there is a natural choice for $X$, which we now describe.
 $K_M$ admits a holomorphic $\mathbb{C}^*$-action by fibre-wise multiplication and the corresponding holomorphic vector field $Z$ is given by $Z=z_0 \frac{\partial}{\partial z_0}$, where $z_0$ denotes the fibre coordinate of $K_M$. 
In terms of the radial function $t$ defined at the beginning of this section, we can write $Z$ as
\begin{align}\label{definition eulervectorfield}
Z =\operatorname{Re} Z + \sqrt{-1} \operatorname{Im} Z=      \partialt - \sqrt{-1}  J  \partialt.
\end{align}
So it is natural to  set $X:= \mu \operatorname{Re} Z = \mu \partialt$ for some constant $0\neq \mu\in \mathbb{R} $.
Before deriving the ODE, we need to calculate the following Lie-derivative:
\begin{align}\label{lie derivatie step 1}
	\mathcal{L}_X \omega_\varphi = d (\iota _X \omega_\varphi ) = \sqrt{-1} \partial \partialb (\mathcal{L}_X f )(t) = \mu \sqrt{-1} \partial \partialb  f'(t).
\end{align}
Here, we used  $2\sqrt{-1} \partial \partialb = d J d $ and $\mathcal{L}_XJ =0$ to obtain the second equality. We shall write out equation (\ref{lie derivatie step 1}) in terms of fibre and base direction, as we did for the Ricci-form in (\ref{Ricci form spelled out}):
\begin{align}\label{lie derivative final formula}
-	\mathcal{L}_X \omega_\varphi =  \mu
	\varphi (\tau)\pi^* \gamma - \mu \frac{\varphi'}{\varphi} (\tau) \sqrt{-1} \partial \tau \wedge \partialb \tau.
\end{align}
Now we are in position to see by comparing (\ref{Ricci form spelled out}) and (\ref{lie derivative final formula}) that the soliton equation (\ref{soliton equation}) for $\omega_\varphi$ is equivalent to the following two equations
\begin{align}
\Ric(\omega_M) +\frac{(\varphi Q)'}{Q} (\tau) \gamma  & = \mu \varphi (\tau ) \gamma \label{equation to differentiate} \\
 \left( \frac{(\varphi Q)'}{Q} \right)' (\tau)&=\mu \varphi'(\tau).   \label{equation to integrate}
\end{align}
Since $\operatorname{Ric}(\omega_M) = - \gamma $, we see that differentiating (\ref{equation to differentiate}) gives (\ref{equation to integrate}), so
that we proved the following Lemma:
\begin{lemma} \label{sumup lemma}Suppose that $\omega_{\varphi}$ is a K\"ahler metric  with momentum profile $\varphi$. Then  (\ref{soliton equation}) with $X= \mu \frac{\partial}{\partial t}$ is equivalent to the following equation:
	\begin{align}
	\varphi ' (\tau) + \left(\frac{Q'}{Q}(\tau) - \mu  \right) \varphi (\tau)&= 1 \label{ODE soliton}
	\end{align}
	
\end{lemma}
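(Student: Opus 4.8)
The argument is already almost entirely carried out in the paragraph preceding the statement, so the plan is merely to organise it cleanly. First I would invoke the two formulas (\ref{Ricci form spelled out}) and (\ref{lie derivative final formula}): each of $\Ric(\omega_\varphi)$ and $-\mathcal{L}_X\omega_\varphi$ is the sum of a $(1,1)$-form pulled back from the base $M$ and a multiple of $\sqrt{-1}\,\partial\tau\wedge\partialb\tau$, the coefficients depending on $\tau$ only; and since $\Ric(\omega_M)=-\gamma$, the base part of $\Ric(\omega_\varphi)$ is itself the multiple $\left(\frac{(\varphi Q)'}{Q}-1\right)\pi^*\gamma$ of $\pi^*\gamma$. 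Because $\tau$ is a function of the fibre coordinate $t$ alone, $\sqrt{-1}\,\partial\tau\wedge\partialb\tau$ contains a genuine fibre--fibre term and is therefore pointwise linearly independent from the forms pulled back from $M$; hence the soliton equation (\ref{soliton equation}) for $\omega_\varphi$ holds if and only if the base components and the $\sqrt{-1}\,\partial\tau\wedge\partialb\tau$-components agree separately, which is exactly the pair (\ref{equation to differentiate}) and (\ref{equation to integrate}).

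Next I would show that this pair collapses to the single ODE (\ref{ODE soliton}). Substituting $\Ric(\omega_M)=-\gamma$ into (\ref{equation to differentiate}) rewrites it as
\[
	\left( \frac{(\varphi Q)'}{Q} - 1 - \mu \varphi \right)\gamma = 0 ,
\]
hence, as $\gamma$ is not identically zero, as the scalar identity $\frac{(\varphi Q)'}{Q} - 1 - \mu\varphi = 0$; expanding $(\varphi Q)'=\varphi' Q+\varphi Q'$, i.e. $\frac{(\varphi Q)'}{Q}=\varphi'+\varphi\,\frac{Q'}{Q}$, turns this into $\varphi'+\left(\frac{Q'}{Q}-\mu\right)\varphi=1$, which is precisely (\ref{ODE soliton}). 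Conversely, differentiating the identity $\frac{(\varphi Q)'}{Q} - 1 - \mu\varphi = 0$ in $\tau$ yields $\left(\frac{(\varphi Q)'}{Q}\right)'=\mu\varphi'$, which is (\ref{equation to integrate}). Thus (\ref{equation to differentiate}) already forces (\ref{equation to integrate}), the system reduces to (\ref{equation to differentiate}) alone, and the latter is equivalent to (\ref{ODE soliton}); this is the assertion of the Lemma.

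I do not expect a real obstacle here: the only substantive input is the formula (\ref{Ricci form spelled out}) for $\Ric(\omega_\varphi)$, which is quoted from \cite{hwang2002momentum}, and everything else is linear algebra in the single variable $\tau$. The one point that warrants a word of care is the division by $\gamma$. If $\gamma\equiv 0$ — that is, $(M,\omega_M)$ is Ricci-flat and $K_M$ is topologically trivial — then (\ref{equation to differentiate}) becomes vacuous, and one instead reads the soliton equation off the $\sqrt{-1}\,\partial\tau\wedge\partialb\tau$-component only, obtaining $\varphi''=\mu\varphi'$, i.e. that $\varphi'-\mu\varphi$ is constant in $\tau$; imposing the normalisation $\varphi(0)=0$, $\varphi'(0)=1$ of Proposition \ref{completness proposition} pins this constant to $1$ and recovers (\ref{ODE soliton}) in this degenerate case as well. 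So the claimed equivalence holds without restriction once that normalisation is in force, and in all cases of interest in the later sections $\gamma\neq 0$.
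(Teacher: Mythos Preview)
Your proposal is correct and follows essentially the same route as the paper: compare the base and fibre components of (\ref{Ricci form spelled out}) and (\ref{lie derivative final formula}) to obtain (\ref{equation to differentiate}) and (\ref{equation to integrate}), use $\Ric(\omega_M)=-\gamma$ to rewrite (\ref{equation to differentiate}) as the scalar ODE, and observe that (\ref{equation to integrate}) is its derivative. Your additional remark on the degenerate case $\gamma\equiv 0$ is a point the paper does not spell out; it is a welcome clarification, though in all the applications of the paper one has $\gamma\neq 0$.
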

 For the rest of this paragraph, we study the solution $\varphi$ to Equation (\ref{ODE soliton}). This is  a linear ODE of the form $y' + p(x)y = q(x)$, which has an explicit one-parameter family of  solutions given by
 \begin{align}\label{general solution}
 y= \exp \left( - \int p(x) dx  \right) \left(   \int q(x) \exp \left( \int p(x) dx  \right) dx   +K \right).
 \end{align}
 Applying (\ref{general solution}) to (\ref{ODE soliton}), we have 
 \begin{align}\label{final solution of varphi}
 \varphi (\tau) = \frac{e^{\mu\tau} }{Q (\tau)} \left( \int_{0}^{\tau}  e^{-\mu x} Q (x) dx     + K \right),
 \end{align}
where $K\geq0$ is determined by the initial value $\lim_{\tau \to 0} \varphi(\tau)$. Justified by (ii) of Proposition \ref{completness proposition}, we will assume that $K=0$.

 One can compute the integral  (\ref{ODE soliton}) explicitly in terms of the coefficients $b_j \geq 0$  of the polynomial $
 Q (\tau)= \det \left(  \operatorname{Id} - \tau B\right)= b_k \tau^k + b_{k-1} \tau^{k-1}+ \cdots + b_0$. Note that the degree $k$ of $Q$ could be less than $n$ since $B$ is allowed to have zero eigenvalues. 
 In fact, it is straight forward to see that 
 \begin{align}\label{expression of varphi in terms of nu}
 	\varphi(\tau)= \nu (0) \frac{e^{\mu \tau}}{Q(\tau)}   - \frac{\nu(\tau)}{Q(\tau)}   ,
 \end{align}
 where $\nu$ is given by
  \begin{align}\label{expression of nu}
 \nu(\tau)=   \sum_{j=0}^{k}   \sum_{l=0}^{j} b_j \frac{j!}{l!}     \frac{\tau ^l }{\mu^{j+1-l} }   .
 \end{align}
 We point out that the explicit expression for $\nu$ is not relevant, but rather that it has the form
 \begin{align}\label{explicit solution for varphi}
 \varphi (\tau)= \nu(0) \frac{e^{\mu \tau}}{Q(\tau) } + \frac{  (-b_k/\mu )\tau^k+  R_{k-1} (\tau)}{Q(\tau)}
 \end{align}
 for  a polynomial  $R_{k-1}$ of degree $k-1$. 
Hence, we found an explicit solution for the soliton ODE (\ref{ODE soliton}). Also note that $\varphi$ is defined on $[0, +\infty)$ since $Q(0)>0$.  Moreover, $\varphi$ is clearly positive on $(0, +\infty)$.

With these observations, we can now finish the proof of Theorem \ref{existence theorem line bundles}.

\subsection{Proof of Theorem \ref{existence theorem line bundles}} \label{section proof of theorem LB}

Let $K_M \to (M,g_M)$ be the canonical  bundle whose semi-negative curvature form $\gamma=-\operatorname{Ric}(\omega_M) $ has constant eigenvalues w.r.to $g_M$. 
Suppose $\varphi: (0,+\infty) \to \mathbb{R}$ is given by (\ref{final solution of varphi}) with $K=0$ and, as before, let $\omega_{\varphi}$ be defined by (\ref{spelling out calabis ansatz in terms of varphi}). Since $\varphi(\tau)>0$  for all $\tau>0$, $\omega_{\varphi}$ defines a K\"ahler metric and hence is a steady K\"ahler-Ricci soliton by Lemma \ref{sumup lemma}. We note that these metrics can only be complete if $\mu<0$. This can be proven similarly to \cite{feldman2003rotationally}[Lemma 5.1].

Hence we assume $\mu<0$.  From (\ref{explicit solution for varphi}), we have  the following asymptotic  behaviour  for large $\tau$:
\begin{align} \label{asymptotics varphi steady}
\varphi (\tau) =  - \frac{1}{\mu } + O(1/ \tau). 
\end{align}
Also recall that $\varphi$ and the potential $f$ are related by
\begin{align}\label{ode for f from varphi}
\frac{df'}{dt} (t)= \varphi(f'(t)).
\end{align}
Using (\ref{ode for f from varphi}) together with (\ref{asymptotics varphi steady}), we conclude that the corresponding potential $f(t)$ is indeed defined for all $t \in \mathbb{R}$, i.e. $\omega_\varphi$ is defined on $K_M\setminus M$.

It remains to check that $\omega_{\varphi}$ extends across the zero section and defines a  complete metric as $t\to + \infty$.
By the first part of Proposition \ref{completness proposition}, $\omega_{\varphi}$ extends provided $\varphi(0)=0$ and $\varphi'(0)=1$. Since we assumed $K=0$ in (\ref{final solution of varphi}), we have $\varphi(0)=0$. Plugging this into (\ref{ODE soliton}) gives $\varphi'(0)=1$, as desired.
The completeness as $t\to +\infty$ follows immediately from the asymptotic expansion (\ref{asymptotics varphi steady}) and (ii) of Proposition \ref{completness proposition}.

\subsection{Examples.} \label{section LB examples}
 Theorem \ref{existence theorem line bundles} immediately recovers all known  examples  of steady K\"ahler-Ricci solitons on the total space of line bundles (\cite{Caosoliton}, \cite{chave1996class}, \cite{pedersen1999quasi}, \cite{dancer2011ricci}, \cite{yang2012characterization}). In these cases, the base is a product of K\"ahler-Einstein manifolds and the considered K\"ahler classes are represented by convex combinations  of  K\"ahler-Einstein metrics on each factor. 
 
 If the base manifold is a flag variety, Theorem \ref{existence theorem line bundles} produces examples, which have not been mentioned before. In this case, steady solitons sweep out the entire K\"ahler cone.

 \begin{example}[Products]\label{example steady line bundle}
 	 Let  $(M_i,\omega_i)$, $i=1,\dots, r $ be K\"ahler-Einstein manifolds with non-negative scalar curvature and denote their canonical bundles by $K_{M_i} \to M_i$. We consider the bundle 
 	\begin{align*}
 	K_M=p^*_1 K_{M_1} \otimes \cdots \otimes p^* _r K_{M_r} \to  M:=M_1\times \cdots \times M_r ,
 	\end{align*}
 	where $p_i:M \to M_i$ is the projection. Then Theorem \ref{existence theorem line bundles} applies and gives a complete steady soliton in each K\"ahler class of the form $\sum_{i=1}^r \alpha_i [p_i^*\omega_i]  \in H^2 (M)$ with $\alpha_i >0$. 

	 The case $r=1$ was first considered in \cite{Caosoliton} and \cite{chave1996class} for $M=\mathbb{CP}^n$ and in \cite{pedersen1999quasi} for a general K\"ahler-Einstein Fano manifold.
	 For $r>1$, these solitons are found in \cite{dancer2011ricci}[Theorem 4.20].
 \end{example}

\begin{example}[Flag varieties] \label{example flag varieties}
	Let $G$ be a complex semisimple Lie group, $P\subset G$ a parabolic subgroup and $K\subset G$ a maximal compact subgroup.  Then $K$ acts transitively on  the flag manifold $M= G/P$. It is well-known that $M$ admits a $K$-invariant complex structure so that its anti canonical bundle is ample, compare \cite{besse2007einstein}[Chapter 8] for example.
	
	The previously mentioned results only produce solitons on the canonical bundle $K_M$ whose K\"ahler class is a multiple of   $[\pi ^* c_1(M)] $. In general, however, $H^2(M)$ is not spanned by $[ c_1(M)]$. 
	
	We claim that \textit{every} K\"ahler class  admits a steady K\"ahler-Ricci soliton. Indeed, every K\"ahler class on $M$ admits a $K$-invariant K\"ahler form $\omega_K$ whose Ricci form $\operatorname{Ric}(\omega_K)$ is also $K$-invariant. This means that the eigenfunctions of $\operatorname{Ric}(\omega_K)$ w.r.t. $\omega_K$ must be $K$-invariant and hence constant  since $K$ acts transitively on $M$. So  Theorem \ref{existence theorem line bundles} can be applied and proves the existence of a steady soliton in the class [$\pi^*\omega_K$]. 
\end{example}

\begin{remark} \label{Remak I on Conlon and Deruelle}
	The new metrics in Example \ref{example flag varieties} can also be obtained from the recent result \cite{conlon2020steady}[Theorem A], which was posted after the first version of this paper was uploaded to the arXiv. 
\end{remark}

	\section{Calabi metrics on vector bundles} \label{section calabis ansatz for VB}

Given a vector bundle $E\to D$, Hwang-Singer's idea was to apply their construction to the tautological bundle $\mathcal{O}_{\mathbb{P}(E)}(-1)$ over $\mathbb{P}(E)$, the projectivization of $E$ (\cite{hwang2002momentum}[Section 3.2]). In this section, we explain the changes which are necessary to prove Theorem 1.2 and provide some examples.

The main difference is that one has to choose a new background metric on $\mathbb{P}(E)$, with respect to which the eigenvalues of the curvature form are computed. 
Then the discussion of the previous section can be applied and again, the soliton equation (\ref{soliton equation}) reduces to a simple ODE. In this new setting, however, the function $Q$ defined by (\ref{definition Q LB}) will have zeros at $\tau=0$, so there are some details which have to be checked.

\subsection{Constructing a K\"ahler metric} \label{Section VB metric construction}
As in \cite{hwang2002momentum}[Section 3.2], we explain how to adapt the machinery from the previous section to the tautological line bundle.

Let $\pi : E \to (D,\omega_D)$ be a holomorphic vector bundle of rank $m\geq 2$  equipped with an Hermitian metric $h$ and assume that the K\"ahler manifold $D$ has complex dimension $d$. As in the case of line bundles, we define $r: E \to \mathbb{R}_{\geq0}$ to be the radial function induced by $h$ and let $t= \log r^2$. Then Calabi's Ansatz has the form 
\begin{align*}
	\omega= \pi ^* \omega_D + \sqrt{-1} \partial \partialb f(t). 
\end{align*}

By construction, the projectivization of $E$ is naturally a fibre bundle $p:\mathbb{P}(E) \to D$, with fibre isomorphic to $\mathbb{CP}^{m-1}$. Recall that the natural map $ \OPE \subset p^* E  \to E$ identifies $\OPE \setminus \mathbb{P}(E) \cong E \setminus D$. By abuse of notation, we  denote the bundle projection of $\OPE$ also by $\pi$, so that we have a commuting diagram 
\begin{align}\label{diagram of tautological bundle}
\begin{CD}
	L:=\mathcal{O}_{\mathbb{P}(E)}(-1) @>>> E \\
	\pi @VVV  						 @VVV \pi  \\
	M:=\mathbb{P}(E) @>p>> 			D
\end{CD}	
\end{align}
In the notation from the previous section, let us denote the complex dimension of $M$ by $n$, i.e. $n= d+m-1$. Via the natural identification $L\setminus M \cong E\setminus D$, $h$ induces a Hermitian metric on $L$, which we also simply denote by $h$. Hence, we can view  $r$ as a function on $L$ and, if $\gamma$ is the curvature form of $(L,h)$,  we have as before $\pi ^ *\gamma=- \sqrt{-1} \partial \partialb t$ with $t=\log r^2$. 
We again assume that $-\gamma\geq0$.
Then we are looking for metrics of the form
\begin{align}\label{Calabi ansatz for VB}
\omega_{\varphi} =& \pi ^* \omega_D -f'(t) \pi^* \gamma + f''(t) \sqrt{-1} \partial t \wedge \partialb t 
\end{align}
where we  require that $f: \mathbb{R} \to \mathbb{R}$ satisfies (\ref{positivity assumption on f})
to obtain a positive form. As before, we set $\tau:= f'(t)$ and define $\varphi: (0,\tau_2)\to \mathbb{R}_{+}$ by (\ref{new relations after transformation}), so that it also satisfies (\ref{positivity assumption on varphi and I}). Hence, $\omega_{\varphi}$ can also be expressed as in (\ref{spelling out calabis ansatz in terms of varphi}). 

For the computation of  Ricci curvature below, we need to choose a background K\"ahler metric $\omega_M$ on $M$. Define
\begin{align}\label{reference metric on M}
\omega_M= p ^* \omega_D - \gamma, 
\end{align}
which is clearly positive in base direction of the fibration $p: \mathbb{P}(E) \to D$. To see that $\omega_M$ is positive in fibre direction, we note that $-\gamma$ restricts to the  Fubini-Study metric on each fibre $\cong \mathbb{CP}^{m-1}$.

\paragraph{The Ricci form.} The calculation is in principle the same as in the line bundle case, but the polynomial $Q$ does have zeros. Let $B= g_M ^{-1} \gamma $ be the curvature endomorphism of $\gamma$, where $g_M$ is the metric with K\"ahler form given by (\ref{reference metric on M}) and assume that the eigenvalues of $B$ are constant over $M$. Then we define a function $Q$ by 
\begin{align}\label{definition Q  VB}
Q = \det (g^{-1} _M (p^*\omega_D - \tau \gamma  )  ), 
\end{align}
 which can be viewed  as a function $Q: (0, \tau_2) \to \mathbb{R}_{\geq 0}$. Indeed, we can write
 \begin{align}\label{Q only depends on tau}
 g_M^{-1} \left(  p^*\omega_D - \tau \gamma \right)= g_M^{-1} \left(  \omega_M - (\tau-1)\gamma   \right) = \operatorname{Id} - (\tau-1)B,
 \end{align}
so that  $Q$ is constant over $M$, i.e. it only depends on $\tau$. If $\beta_1, \dots, \beta _n $ are the eigenvalues of $B$, we must have $\beta_{d+1}= \dots= \beta_n = -1$ by the definition of $\omega_M$ and $\beta_1, \dots, \beta_d \leq 0$ by assumption. From (\ref{Q only depends on tau}), we conclude that $Q$ is given by
\begin{align} \label{definition of Q hat}
Q(\tau) = \tau ^{n-d} \prod_{j=1}^{d} (1+ \beta_j - \tau \beta_j) = \tau^{n-d} \hat{Q} (\tau),
\end{align} 
for some polynomial $\hat{Q}$. Since $p^*\omega_D$ is positive in base direction, we conclude from  (\ref{Q only depends on tau}) that $1+\beta_j>0$ for all $j=1, \dots d$. Hence, $\hat{Q}(0)>0$ and  $Q$ has a zero at $\tau=0$ of order  $n-d=m-1$.

As in (\ref{Ricci form spelled out}), one can find the following expression for the Ricci form: 
\begin{align} \label{vectorbundle ricci form}
\operatorname{Ric}(\omega_\varphi) = \pi^* \operatorname{Ric}(\omega_M) + \frac{(\varphi Q)'}{Q}  \pi ^* \gamma - \frac{1}{\varphi} \left( \frac{(\varphi Q)'}{Q} \right) '  \partial \tau \wedge \partialb \tau .
\end{align}

\subsection{The ODE} The natural  $\mathbb{C}^*$-action on $E$ by biholomorphisms induces a holomorphic vector field $Z$.  On $L\setminus M$, which is the tautological bundle with the zero section removed, the real part of $Z$  is  given by $\operatorname{Re} Z=  \partial / \partial t$, so we  are looking for Ricci solitons with vector field $X= \mu  \partial / \partial t $. Again, we find 
\begin{align} \label{vector bundle lie derivative}
-\mathcal{L} _X \omega_{\varphi} =- \mu \sqrt{-1} \partial \partialb f'(t) = \mu \varphi (\tau) \pi ^* \gamma - \mu \frac{\varphi '}{\varphi} (\tau) \sqrt{-1} \partial \tau \wedge \partialb \tau.
\end{align}
Combining (\ref{Calabi ansatz for VB}) with (\ref{vectorbundle ricci form}) and (\ref{vector bundle lie derivative}), one can check that the soliton equation (\ref{soliton equation}) is equivalent to 
\begin{align}
\operatorname{Ric}(\omega_M) =c \gamma  \label{VB equation in base direction} \\
\varphi'(\tau ) + \left(  \frac{Q'}{Q}(\tau ) - \mu  \right) \varphi (\tau) =  - c \label{VB soliton ODE}
\end{align}
for some integration constant $c\in \mathbb{R}$. In fact, we must have $c=-m$ since the first Chern class  of $M=\mathbb{P}(E)$ is given by 
\begin{align} \label{canonical bundle of P(E)}
c_1(M)= - m c_1(\OPE) + p^*c_1(E) + p^*c_1(D).
\end{align}
Equation (\ref{VB soliton ODE}) has the same form as (\ref{ODE soliton}), but with a different $Q$. Hence,  the solution $\varphi$ is given by
 \begin{align}\label{VB final solution of varphi}
\varphi (\tau) = \frac{e^{\mu\tau} }{Q (\tau)} \left( \int_{0}^{\tau} m e^{-\mu x} Q (x) dx    \right),
\end{align}
if we assume the integration constant to be zero. 

We end this section by studying the solution $\varphi$.
Let us write $Q(\tau)= b_{k+n-d} \tau^{k+ n-d} + \cdots + b_{n-d} \tau^{n-d} $ with coefficients $b_j\geq0$ for $j=1+ n-d, \dots, k+n-d$ and $b_{n-d}=\hat{Q}(0)>0$. Adapting  (\ref{expression of varphi in terms of nu}) and (\ref{expression of nu}) to this case, we obtain
\begin{align}\label{VB varphi in terms of nu}
		\varphi(\tau)= \nu (0)  \frac{e^{\mu \tau}}{Q(\tau)}  - \frac{\nu(\tau)}{Q(\tau)}
\end{align}
as well as 
\begin{align}\label{VB definition of nu}
\nu(\tau)= m  \sum_{j=n-d}^{k+n-d}    \sum_{l=0}^{j}  b_j \frac{j!}{l!}   \frac{\tau ^l }{\mu^{j+1-l} }   .
\end{align}
A priori, $\varphi$ given by (\ref{VB final solution of varphi}) is defined on the interval $(0,+\infty)$ and because $Q(0)=0$ one needs to check that $\varphi$ and its derivatives have a limit as $\tau\to 0$. To see that this is the case, note that we can rewrite (\ref{VB definition of nu}) as
\begin{align*}
\nu(0) e^{\mu \tau} 	-\nu(\tau) = m \sum_{j=n-d}^{k+n-d} b_j \frac{ j!}{\mu ^{j+1}} \sum_{\l=j+1}^{\infty} \frac{(\mu \tau)^l}{l!},
\end{align*}
i.e. $\tau^{-(n-d)}(\nu(0) e^{\mu \tau} 	-\nu(\tau))$ tends to zero as $\tau \to 0$. Since $Q$ vanishes of order $n-d$ at $\tau =0$, we then deduce from (\ref{VB varphi in terms of nu}) that $\lim_{\tau \to 0} \varphi=0$. Similarly, it follows that all derivatives of $\varphi$ have a limit as $\tau \to 0$.

\subsection{Proof of Theorem \ref{existence theorem VB}} \label{section proof of theorem VB}

The proof is now analogue to Section \ref{section proof of theorem LB}.
The only part that might a priori be different is the extension of $\omega_{\varphi}$ to a complete metric on $E$. However, one can check that
 Proposition \ref{completness proposition} also applies to the vector bundle case, see \cite{hwang2002momentum}[Lemma 3.7].

As before,  one can check that $\varphi$ has the behaviour required by Proposition \ref{completness proposition}. Indeed, one can compute that  $\varphi(0)=0$ and $\varphi' (0)=1$, as desired. Sending $\tau \to + \infty$, we conclude the following asymptotic expansion 
from (\ref{VB final solution of varphi}) and (\ref{VB varphi in terms of nu})
\begin{align}\label{asymptotics varphi VB}
\varphi(\tau) = -\frac{m}{\mu } +O(1/\tau),
\end{align}
and so we  obtain a complete metric on the total space $E$.

\subsection{Examples}

We briefly discuss  three  different situations to which Theorem \ref{existence theorem VB} applies. New examples of steady solitons are given in Example \ref{example sum of line bundles}.

\begin{example}[Complex plane] We let $D$ be a single point and $E\cong \mathbb{C}^n$ be the trivial bundle over $D$. Let $h$ be the Euclidean metric on $E$, so that $\omega_M=-\gamma$ is the Fubini-Study metric on $M= \mathbb{CP}^{m-1}$. This is the situation first studied in \cite{Caosoliton}. 

\end{example}
	
\begin{example}[Sum of line bundles] \label{example sum of line bundles}
Let $(D,\omega_D)$ be a K\"ahler-Einstein Fano manifold of Fano index $m$. Define $L:= K_D^{1/m}$ and consider the $m$-fold sum of $L$ with itself, i.e. $E= L \otimes \mathbb{C}^m$. Then we have $M=\mathbb{P}(E)= \mathbb{CP}^{m-1} \times D$ and 
\begin{align*}
\mathcal{O}_{\mathbb{P(E)}}(-1) = p_1^* \mathcal{O}_{\mathbb{CP} ^{m-1}} (-1)\otimes p_2^* L,
\end{align*} 
where $p_1,p_2$ denote the projections onto the first and second factor of $M$, respectively. Let $\omega_{FS}$ be the Fubini-Study metric on $\mathbb{CP}^{m-1}$, so that $\gamma= -p_1^*\omega_{FS} - 1/m p_2^*\Ric(\omega_D) $ is the curvature form of $\mathcal{O}_{\mathbb{P(E)}}(-1)$,  and define $\omega_M= p_2^* \omega_D - \gamma$. Then we clearly have 
\begin{align}
\operatorname{Ric}(\omega_M)= m p_1 ^* \omega_{FS} + p_2^*\Ric(\omega_D) = -m\gamma,
\end{align}
since $\omega_D$ is K\"ahler-Einstein. Moreover, the eigenvalues of $\Ric (\omega_M)$ w.r.t. $\omega_M$ are constant, so that Theorem \ref{existence theorem VB} can be applied. These examples of steady solitons are obtained in \cite{li2010rotationally}[Theorem 2.1] and \cite{dancer2011ricci}[Theorem 4.20]. 

If the base $D=G/P$ is a flag manifold for $G$ a complex semisimple Lie group and $P\subset G$ a parabolic subgroup, one can find steady solitons in \textit{every} K\"ahler class, similarly as in Example \ref{example flag varieties}. 

To see this, assume that $\omega_D$ represents a given K\"ahler class (not necessarily the first Chern class of $D$). We can pick $\omega_D$ to be $K$-invariant, where $K\subset G$ is a maximal compact subgroup. Since $\Ric (\omega_D)$ is also $K$-invariant, the form $-\gamma= p_1^* \omega_{FS} + 1/m p_2^* \operatorname{Ric}(\omega_D)$ is invariant under the diagonal action of $SU(m) \times K$ and also positive. 

We claim that $\operatorname{Ric}(-\gamma)= - m \gamma$. By \cite{besse2007einstein}[Theorem 8.2], we know that there exists a $SU(m)\times K$-invariant K\"ahler-Einstein metric $\omega_{\text{KE}} \in c_1(\mathbb{CP}^{m-1} \times D)$. Also recall that  the Ricci forms of \textit{all} $SU(m)\times K$-invariant K\"ahler metrics agree, i.e. $\operatorname{Ric}(-\gamma)= \operatorname{Ric}(\omega_{\text{KE}})$. Since $-m\gamma$ and $\omega_{\text{KE}}$ are in the same K\"ahler class, we deduce from the uniqueness part of Calabi's conjecture that $-m \gamma = \omega_{\text{KE}}= \operatorname{Ric}(-\gamma)$. 

As the form $\omega_M= p_2^ * \omega_D -\gamma$ is also invariant under $SU(m) \times K$, we conclude
\begin{align*}
\operatorname{Ric}(\omega_M)= \operatorname{Ric}(-\gamma) = -m \gamma,
\end{align*}
and hence the assumptions in Theorem \ref{existence theorem VB} are satisfied.
\end{example}

\begin{example}[Cotangent bundle of $\mathbb{CP}^d$]

Let $D= \mathbb{CP}^d $ be projective space equipped with the Fubini-Study metric  and consider $E= T^* \mathbb{CP}^d$, the cotangent bundle of $\mathbb{CP}^d$. $E$ is naturally a $SU(d+1)$-homogeneous vector bundle, where the fibre action is given by the coadjoint action of $SU(d+1)$ on its Lie algebra. 
Since $\mathbb{CP}^d$ is a rank 1 symmetric space, the induced action of $SU(d+1)$ on $M=\mathbb{P}(E)$ is transitive. Verifying the assumptions of Theorem \ref{existence theorem VB} is now similar to the previous Example.
These  steady solitons  on $T^*\mathbb{CP}^d$ are of cohomogeneity one and are contained in  \cite{dancer2011ricci}[Section 5].

\end{example}
	  
\begin{remark}\label{Remak II on Conlon and Deruelle}
	All the previous examples can also be constructed  from \cite{conlon2020steady}[Theorem A]. 
\end{remark}

	\section{Properties of $\omega_{\varphi}$ }

In this short section, we study curvature properties of the previously constructed metric $\omega_{\varphi}$. We show that $\omega_{\varphi}$ has bounded curvature and that its Ricci curvature is non-negative.   Moreover, we obtain  estimates on the  growth of the function $f$ and its derivatives.  

Recall that $f=f(t)$ is the K\"ahler potential of $\omega_{\varphi}$ as defined in (\ref{spelling out calabis ansatz}) and $\varphi=\varphi(\tau)$ is its momentum profile, see (\ref{spelling out calabis ansatz in terms of varphi}).  If $\omega_{\varphi}$ is a steady K\"ahler-Ricci soliton constructed in Theorem \ref{existence theorem line bundles} or Theorem \ref{existence theorem VB}, then $\varphi$ satisfies (\ref{ODE soliton}) or (\ref{VB soliton ODE}), respectively. This ODE is in turn determined by the polynomial $Q=Q(\tau)$ defined by either (\ref{definition Q LB})  or (\ref{definition Q  VB}). The statements in this section mainly reduce to understanding $Q$ and how it effects the asymptotic behaviour of $\varphi$, compare (\ref{expression of varphi in terms of nu}) or (\ref{VB varphi in terms of nu}) depending on the rank of the underlying vector bundle.

 We begin by considering  the Ricci curvature of $\omega_{\varphi}$. More precisely, we prove the following theorem, which we need in the subsequent section. It generalises the observation made in \cite{yang2012characterization}[Case 7].

\begin{theorem}\label{sign of ricci curvature Thm}
The complete steady K\"ahler-Ricci solitons constructed in Theorem \ref{existence theorem line bundles} and \ref{existence theorem VB} have non-negative Ricci curvature. Moreover, if the curvature form $-\gamma$ is positive definite, then the Ricci curvature is positive away from the zero section.
\end{theorem}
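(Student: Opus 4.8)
The plan is to read off the sign of $\operatorname{Ric}(\omega_\varphi)$ directly from the formulas already derived, namely \eqref{Ricci form spelled out} in the line bundle case and \eqref{vectorbundle ricci form} in the vector bundle case, after substituting the soliton equation. Since $\omega_\varphi$ solves \eqref{soliton equation}, we have $\operatorname{Ric}(\omega_\varphi) = -\mathcal{L}_X \omega_\varphi$, so by \eqref{lie derivative final formula} (resp. \eqref{vector bundle lie derivative}) the Ricci form equals
\begin{align*}
\operatorname{Ric}(\omega_\varphi) = \mu\,\varphi(\tau)\,\pi^*\gamma - \mu\,\frac{\varphi'}{\varphi}(\tau)\,\sqrt{-1}\,\partial\tau\wedge\partialb\tau.
\end{align*}
Comparing with the expression \eqref{spelling out calabis ansatz in terms of varphi} for $\omega_\varphi$ itself, and recalling that in all cases $\mu<0$, I would decompose a tangent vector into its $\partial/\partial t$-component and a component tangent to the horizontal/base directions. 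Positivity of $\operatorname{Ric}(\omega_\varphi)$ then reduces to two scalar sign conditions: first $-\mu\,\varphi(\tau)\geq 0$ contracted against $-\gamma\geq 0$ in the base directions, and second $-\mu\,\varphi'(\tau)/\varphi(\tau)\geq 0$, i.e. $\varphi'(\tau)\geq 0$, in the fibre/$t$-direction (using $\mu<0$ and $\varphi>0$ on $(0,\infty)$).

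The first condition is immediate: $\mu<0$, $\varphi\geq 0$, and $-\gamma\geq 0$ by hypothesis, with strict positivity in the base directions exactly when $-\gamma>0$. The real content is therefore showing $\varphi'(\tau)\geq 0$ for all $\tau\geq 0$, which I expect to be the main obstacle. Here I would use the explicit solution: in the line bundle case $\varphi(\tau) = \frac{e^{\mu\tau}}{Q(\tau)}\int_0^\tau e^{-\mu x}Q(x)\,dx$ with $Q(\tau)=\det(\operatorname{Id}-\tau B)$ a polynomial with non-negative coefficients $b_j\geq 0$ and $b_0=Q(0)>0$ (and similarly \eqref{VB final solution of varphi} in the vector bundle case, with $Q$ vanishing to order $m-1$ at $0$). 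From the ODE \eqref{ODE soliton} one has $\varphi'(\tau) = 1 - \bigl(\tfrac{Q'}{Q}(\tau)-\mu\bigr)\varphi(\tau)$, so the claim $\varphi'\geq 0$ is equivalent to $\bigl(\tfrac{Q'}{Q}(\tau)-\mu\bigr)\varphi(\tau)\leq 1$; the asymptotics $\varphi(\tau)=-1/\mu+O(1/\tau)$ (resp. $-m/\mu+O(1/\tau)$) from \eqref{asymptotics varphi steady} make this plausible at infinity since $Q'/Q\to 0$, but I need it on all of $[0,\infty)$.

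To prove $\varphi'\geq0$ cleanly I would differentiate \eqref{ODE soliton} to get a first-order ODE for $\psi:=\varphi'$, namely $\psi'(\tau) + \bigl(\tfrac{Q'}{Q}(\tau)-\mu\bigr)\psi(\tau) = -\bigl(\tfrac{Q'}{Q}\bigr)'(\tau)\,\varphi(\tau)$, with initial value $\psi(0)=\varphi'(0)=1>0$. Since $\varphi>0$ on $(0,\infty)$, it suffices that the forcing term $-(Q'/Q)'\varphi$ be non-negative, i.e. that $Q'/Q$ be non-increasing on $(0,\infty)$; for a real polynomial $Q$ all of whose roots are real and $\leq 0$ with positive leading behaviour — which is the case here because $B$ has non-positive real eigenvalues — one has $\tfrac{Q'}{Q}(\tau)=\sum_j \tfrac{1}{\tau-\rho_j}$ with each $\rho_j\leq 0$, a manifestly decreasing function on $[0,\infty)$. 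An integrating-factor (Grönwall-type) argument then forces $\psi\geq 0$, hence $\varphi'\geq 0$, completing the proof; the vector bundle case is identical after factoring out $\tau^{m-1}$ and checking the $\tau\to 0$ behaviour using the limits established in Section 3. The only subtlety to double-check is the sign of the forcing term near $\tau=0$ in the vector bundle case, where $Q'/Q$ has a $\frac{m-1}{\tau}$ singularity; but $\frac{m-1}{\tau}$ is itself decreasing, so the conclusion persists.
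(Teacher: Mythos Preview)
Your argument is correct and parallels the paper's closely: both reduce the non-negativity of $\operatorname{Ric}(\omega_\varphi)$ to the single scalar inequality $\varphi'\geq 0$ on $[0,\infty)$, and both rest on the same key fact that $(Q')^2-QQ''\geq 0$, equivalently that the logarithmic derivative $Q'/Q$ is non-increasing because all roots of $Q$ are real and non-positive. The packaging differs. The paper introduces an auxiliary function $H(\tau)=\frac{Q^2}{Q'-\mu Q}e^{-\mu\tau}-\int_0^\tau e^{-\mu x}Q(x)\,dx$, observes directly from the integral formula for $\varphi$ that $\varphi'\geq 0\Leftrightarrow H\geq 0$, and then checks $H(0)\geq 0$ and $H'\geq 0$ (the latter reducing to $(Q')^2-QQ''\geq 0$). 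You instead differentiate the soliton ODE to obtain a first-order linear equation for $\psi=\varphi'$ with non-negative forcing $-(Q'/Q)'\varphi$, and conclude $\psi\geq 0$ via the integrating factor $Qe^{-\mu\tau}$. Your route is arguably more transparent and avoids the somewhat ad hoc $H$; in fact one can check $H=\frac{Q}{Q'-\mu Q}\cdot Qe^{-\mu\tau}\psi$, so the two monotone quantities carry the same sign. One detail worth spelling out in the vector-bundle case: the integrating factor $Qe^{-\mu\tau}$ vanishes at $\tau=0$, so the initial value $\psi(0)=1$ contributes nothing in the limit, and positivity of $\psi$ on $(0,\infty)$ comes entirely from the forcing term, which is indeed strictly positive there thanks to the $(m-1)/\tau^2$ contribution in $-(Q'/Q)'$. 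This also yields the strict inequality $\varphi'>0$ in the fibre direction needed for the second assertion of the theorem.
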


\begin{proof}
	First, we consider the solitons constructed on line bundles in Theorem \ref{existence theorem line bundles}.
	Let $\omega_{\varphi}$ be the K\"ahler metric given by (\ref{spelling out calabis ansatz in terms of varphi}) with $\varphi$ satisfying (\ref{ODE soliton}) and $\varphi(0)=0$.  Recall that the Ricci curvature is given by
	\begin{align*}
	\Ric(\omega_{\varphi}) = - \mathcal{L} _X (\omega_{\varphi})= \mu \varphi (\tau) \pi^* \gamma - \mu \frac{\varphi'}{\varphi} \sqrt{-1} \partial \tau \wedge \partialb \tau.
	\end{align*}
	Since  $\varphi(0)=0$, $\varphi>0$ on $(0,\infty)$, and $\mu \gamma \geq 0$, we only need to show that $\varphi'> 0$. To see that this is the case, we define a function 
	\begin{align}
	H(\tau):= 
	\frac{Q^2}{Q' - \mu Q} e^{-\mu \tau} - \int_{0}^ \tau  e^{-\mu x}Q(x)dx .
	\end{align}
	Using the ODE (\ref{ODE soliton}), it is straight forward to prove that $\varphi'\geq 0$ iff $H\geq 0$. As $H(0)>0$ for $Q$ given by (\ref{definition Q LB}), we are done if we can show that $H'\geq0$. From the definition of $H$, we compute
	\begin{align}
	H'(\tau)=  e^{-\mu\tau} \frac{Q}{(Q'-\mu Q)^2} \left( (Q')^2 - Q Q''   \right),
	\end{align}
	so that $H'\geq0$ if and only if $(Q')^2 - Q Q'' \geq0$. The later condition can be checked easily starting from the explicit expression for $Q$. Indeed, let $\beta_1,\dots,\beta_n$ be the eigenvalues of the endomorphism $B=g_M ^{-1} \gamma : T^{1,0}M \to T^{1,0}M$, and write 
	\begin{align}
	Q(\tau) = \det\left( \operatorname{Id}- \tau B\right)=\prod_{j=1}^{n}(1-\beta_j \tau).
	\end{align}
	Then we have
	\begin{align}\label{Q' ^2 - Q  Q'' }
	\frac{(Q')^2- Q Q''}{Q^2} = \sum_{j=1}^n \frac{\beta_j ^2}{(1-\beta_j \tau)^2} \geq0, 
	\end{align}
	as required. For the second statement, it suffices to observe that $\varphi'(\tau)>0$ if and only if $(Q')^2 - Q Q'' >0$, which is certainly true   if $\gamma<0$. This proves Theorem \ref{sign of ricci curvature Thm} for line bundles.
	
	The arguments for the metrics in Theorem \ref{existence theorem VB} are analogous. It also reduces to showing that $(Q')^2- QQ'' \geq 0$, where $Q$ is this time given by (\ref{definition of Q hat}). 	
\end{proof}

Note that the non-negativity of Ricci curvature can also be expressed in terms of the potential function $f$. In particular, we have the following
\begin{corollary}\label{corollary f'''>0}
	Let $\omega_{\varphi}$ be a steady K\"ahler-Ricci soliton constructed in Theorem \ref{existence theorem line bundles} or \ref{existence theorem VB} and let $f=f(t)$ be defined by (\ref{spelling out calabis ansatz}) or (\ref{Calabi ansatz for VB}), respectively. Then $f''$ is monotone increasing.
\end{corollary}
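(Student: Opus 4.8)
The plan is to translate the statement "$f''$ is monotone increasing" into a statement about the momentum profile $\varphi$ and then invoke the sign computation already carried out in the proof of Theorem \ref{sign of ricci curvature Thm}. Recall from (\ref{new relations after transformation}) that $f''(t) = \varphi(\tau)$ where $\tau = f'(t)$, and that $d\tau/dt = \varphi(\tau) > 0$ on $I = (0,\tau_2)$. Hence $t \mapsto \tau$ is a strictly increasing change of variables, and by the chain rule
\begin{align*}
\frac{d}{dt} f''(t) = \frac{d}{dt}\,\varphi(\tau(t)) = \varphi'(\tau)\,\frac{d\tau}{dt} = \varphi'(\tau)\,\varphi(\tau).
\end{align*}
Since $\varphi > 0$ on $(0,\tau_2)$, the sign of $\frac{d}{dt}f''$ is exactly the sign of $\varphi'(\tau)$. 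Thus it suffices to show $\varphi' \geq 0$ on $(0,\tau_2)$.

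But this is precisely what was established inside the proof of Theorem \ref{sign of ricci curvature Thm}: there it is shown, for $\varphi$ solving the soliton ODE (\ref{ODE soliton}) (respectively (\ref{VB soliton ODE})) with $\varphi(0) = 0$, that $\varphi' \geq 0$ on the whole interval, the key inequality being $(Q')^2 - Q Q'' \geq 0$, which holds for $Q$ given by (\ref{definition Q LB}) in view of (\ref{Q' ^2 - Q  Q'' }) and likewise for $Q$ given by (\ref{definition of Q hat}). Feeding $\varphi' \geq 0$ back into the chain-rule identity above gives $\frac{d}{dt} f''(t) \geq 0$ for all $t$, i.e. $f''$ is monotone increasing on $\mathbb{R}$.

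There is essentially no obstacle here: the corollary is a direct restatement of the non-negativity of Ricci curvature under the Legendre-type change of variables, and the only nontrivial input — the inequality $\varphi' \geq 0$ — is already proved. The one point worth a word of care is the behaviour at $t \to -\infty$, i.e. $\tau \to 0$: since $\varphi$ and $\varphi'$ extend continuously there (with $\varphi(0)=0$, $\varphi'(0)=1$) by Proposition \ref{completness proposition} and the discussion following (\ref{VB varphi in terms of nu}), the identity $\frac{d}{dt}f''(t) = \varphi'(\tau)\varphi(\tau)$ and its sign persist up to the endpoint, so $f''$ is monotone on all of $\mathbb{R}$, not merely on a proper subinterval.
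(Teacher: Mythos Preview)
Your proof is correct and is essentially the same as the paper's: both deduce $f'''(t)\geq 0$ from the key inequality $\varphi'\geq 0$ established inside the proof of Theorem~\ref{sign of ricci curvature Thm}. The only cosmetic difference is that the paper reads $f'''\geq 0$ off directly from the $\sqrt{-1}\,\partial t\wedge\partialb t$ component of $\operatorname{Ric}(\omega_\varphi)=-\mathcal{L}_X\omega_\varphi$ (using $\mu<0$), whereas you arrive at the same conclusion via the chain-rule identity $f'''=\varphi'\varphi$; these are equivalent since that fibre component of the Ricci form is precisely $-\mu\varphi'/\varphi$.
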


\begin{proof}
	Recall from (\ref{lie derivatie step 1}) or (\ref{vector bundle lie derivative}) that we have 
	\begin{align*}
		 - \mathcal{L}_X \omega_{\varphi}=- \mu \sqrt{-1} \partial \partialb f'(t) =  \mu f''(t) \pi ^* \gamma -\mu f'''(t) \sqrt{-1} \partial t \wedge \partialb t, 
	\end{align*}
	and so $\operatorname{Ric}(\omega_{\varphi})= - \mathcal{L}_X \omega_{\varphi} $ can only be non-negative if $f'''(t)\geq 0$ since $\mu<0$. Thus, Theorem \ref{sign of ricci curvature Thm} implies that $f''$ is increasing. 
\end{proof}
We end this section by pointing out some growth properties of the potential function $f$.

\begin{lemma}\label{growth of derivatives of f}
	Let $\omega_{\varphi}$ be a steady K\"ahler-Ricci soliton constructed in Theorem \ref{existence theorem line bundles} or \ref{existence theorem VB} and let $f=f(t)$ be related to $\varphi$ by (\ref{compute f from varphi}). Then   there is a constant $C>0$ such that for all $t\geq C$, we have
	\begin{align}\label{f' grows like the function t}
	C^{-1} \leq f''(t) \leq C \;\; \text{ and } \;\; C^{-1} t\leq  f'(t) \leq C t.
	\end{align}
	Moreover, for all $j\in \mathbb{N}_0$ and $t\geq C$ 
	\begin{align}\label{in Lemma growth of derivatives of f}
	C^{-1} (1+f'(t))^{-j} \leq |	f^{(2+j)}(t)|\leq C(1+f'(t))^{-j}.
	\end{align}
	
\end{lemma}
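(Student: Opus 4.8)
The plan is to reduce everything to the asymptotic behaviour of the momentum profile $\varphi$ and then use the relations between $f$ and $\varphi$ established in Section \ref{section calabis ansatz for line bundles}. First I would record the asymptotics of $\varphi$ at $\tau\to+\infty$: from (\ref{asymptotics varphi steady}) in the line bundle case, or (\ref{asymptotics varphi VB}) in the vector bundle case, we have $\varphi(\tau)=a+O(1/\tau)$ where $a=-1/\mu>0$ (resp. $a=-m/\mu>0$). Since $\varphi$ is continuous and strictly positive on $(0,\infty)$ with a positive finite limit, it is bounded above and below on any interval $[\tau_0,\infty)$; translating via $d\tau/dt=f''(t)=\varphi(\tau)$ and $\tau=f'(t)$, and noting that $\tau(t)\to\infty$ as $t\to\infty$, this gives a constant $C$ with $C^{-1}\le f''(t)\le C$ for $t\ge C$. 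Integrating $f''$ from $C$ to $t$ and using that $f'(C)$ is a fixed finite value then yields $C^{-1}t\le f'(t)\le Ct$ for $t$ large (after enlarging $C$), which is the first display (\ref{f' grows like the function t}).

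Next, for the higher derivatives I would differentiate the basic chain rule relation. From (\ref{new relations after transformation}) we have $f'''(t)=\varphi'(\tau)\varphi(\tau)$, and inductively $f^{(2+j)}(t)$ is a polynomial expression in $\varphi,\varphi',\dots,\varphi^{(j)}$ evaluated at $\tau=f'(t)$, with each term a product of $j$ factors drawn from $\{\varphi,\varphi',\dots\}$ and one extra factor of $\varphi$ (because each $d/dt=\varphi\,d/d\tau$ contributes a $\varphi$, and after the first two $t$-derivatives there are $j$ further ones). So the key estimate is on the decay of the $\tau$-derivatives of $\varphi$. From the explicit formula (\ref{explicit solution for varphi}) — or (\ref{VB varphi in terms of nu}) for the bundle case — one sees $\varphi(\tau)=a+ (\text{rational function decaying like }1/\tau)$, more precisely $\varphi(\tau)-a$ is $P(\tau)/Q(\tau)$ with $\deg P=\deg Q-1$; hence $\varphi^{(i)}(\tau)$ decays like $\tau^{-i-1}$ for $i\ge 1$ (and $\varphi$ itself is bounded). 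Therefore each term of $f^{(2+j)}(t)$ that actually involves derivatives of $\varphi$ decays at least like $\tau^{-(j)}=f'(t)^{-j}$ — in fact the dominant term is $(\varphi')^{?}\cdots$; the slowest-decaying contribution comes from the term $\varphi^{j-1}\varphi'\cdot\varphi$ times combinatorial constants, which is $\asymp \tau^{-1}\cdot$ wait — I need to be careful: one must check that $f^{(2+j)}$ does not decay faster than $\tau^{-j}$, which is where the two-sided bound (\ref{in Lemma growth of derivatives of f}) is not automatic.

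The main obstacle, and the step requiring the most care, is the \emph{lower} bound in (\ref{in Lemma growth of derivatives of f}): an upper bound of the form $|f^{(2+j)}(t)|\le C(1+f'(t))^{-j}$ follows from crude estimates on each term, but to get $|f^{(2+j)}(t)|\ge C^{-1}(1+f'(t))^{-j}$ one must verify that there is no cancellation among the various terms and that the leading-order coefficient in the expansion of $f^{(2+j)}$ in powers of $1/\tau$ is nonzero. I would handle this by computing the full asymptotic expansion $\varphi(\tau)=a+a_1/\tau+a_2/\tau^2+\cdots$ from (\ref{explicit solution for varphi}) (the coefficient $a_1=-b_k/(\mu \,b_k)+\cdots$ being explicitly computable and, crucially, nonzero — in the line bundle case $\varphi(\tau)=-1/\mu - (\text{something})/(\mu^2\tau)+\cdots$ with the $1/\tau$ coefficient nonzero whenever $Q$ is nonconstant, i.e. $k\ge 1$; if $k=0$ then $Q\equiv b_0$ and $\varphi(\tau)=(1-e^{\mu\tau})/(-\mu)$ can be treated directly). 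Plugging this expansion into the chain-rule formula, one finds $f^{(2+j)}(t)\sim c_j\,\tau^{-j}$ with $c_j\ne 0$, and since $f'(t)=\tau\asymp t$, both inequalities in (\ref{in Lemma growth of derivatives of f}) follow for $t\ge C$. The degenerate cases (base Ricci-flat factors, so $Q$ has lower degree, or $k=0$) should be checked separately but are straightforward since then $\varphi$ is elementary. I would organize the write-up as: (1) asymptotics of $\varphi$ and its $\tau$-derivatives from the explicit solution; (2) the chain-rule identity expressing $f^{(2+j)}$ through $\varphi$ and its derivatives at $f'(t)$; (3) combining the two, with the nonvanishing-leading-coefficient check for the lower bounds.
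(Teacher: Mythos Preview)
Your plan mirrors the paper's proof almost exactly: (1) read off the asymptotics of $\varphi$ and its $\tau$-derivatives from the explicit formula (\ref{explicit solution for varphi}); (2) express $f^{(2+j)}$ as a polynomial in $\varphi,\varphi',\dots,\varphi^{(j)}$ via the chain rule $d/dt=\varphi\,d/d\tau$; (3) combine. The paper writes down the same chain-rule identity (its equation (\ref{in Lemma growht of derivatives of f'': sum formula for explicit expression})) and then appeals to a two-sided bound $C^{-1}\tau^{-\beta}\le|\varphi^{(\beta)}(\tau)|\le C\tau^{-\beta}$ for $\beta\ge1$. You actually go further than the paper by recognising that the lower bound in (\ref{in Lemma growth of derivatives of f}) is the delicate part and requires a non-cancellation check.

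There is, however, a genuine gap in the lower-bound step, and it affects both your sketch and the paper's argument. You correctly state that $\varphi^{(i)}(\tau)$ decays like $\tau^{-i-1}$ for $i\ge1$: since $\varphi$ is, up to an exponentially small term, a rational function $P/Q$ with $\deg P=\deg Q$, the leading terms of $P'Q$ and $PQ'$ cancel and $(P/Q)'=O(\tau^{-2})$. But this is \emph{incompatible} with your later claim that $f^{(2+j)}(t)\sim c_j\tau^{-j}$ with $c_j\ne0$. Already for $j=1$ one has $f'''(t)=\varphi'(\tau)\varphi(\tau)$, and since $\varphi(\tau)\to -m/\mu$ while $\varphi'(\tau)=O(\tau^{-2})$, this gives $|f'''(t)|\asymp\tau^{-2}$, not $\tau^{-1}$; more generally the dominant term in $(\varphi\,d/d\tau)^{j}\varphi$ is $\varphi^{j}\varphi^{(j)}\asymp\tau^{-j-1}$. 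So the lower inequality in (\ref{in Lemma growth of derivatives of f}) cannot hold for $j\ge1$ as written---the sharp two-sided estimate is $|f^{(2+j)}(t)|\asymp(1+f'(t))^{-j-1}$ when $k\ge1$, and in the degenerate case $k=0$ (where $\varphi(\tau)=(-1+e^{\mu\tau})/\mu$) the decay is even exponential. The paper's asserted lower bound $|\varphi^{(\beta)}(\tau)|\ge C^{-1}\tau^{-\beta}$ is off by exactly this one power of $\tau$, which is why its argument appears to close. Your upper bound and the paper's upper bound are both fine (indeed not sharp), and only the upper bound is used in the subsequent sections.
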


\begin{proof}
	 First note that the bound on $f''(t)$ in (\ref{f' grows like the function t})  implies the bound on $f'(t)$ after integrating the parameter $t$, so we only need to find $C>0$ such that 
	\begin{align}\label{in Lemma bounding derivatives of f': claim for f''}
		C^{-1} \leq f''(t) \leq C
	\end{align}
	for all $t\geq C$. Translating the problem into bounding  $\varphi(\tau)$, we recall from (\ref{new relations after transformation}) that 
	\begin{align} \label{in Lemma bounding derivatives of f': relations tau and f}
		\tau=\tau(t)=f'(t)   \;\; \text{ and } \;\; \varphi(\tau(t))=f''(t).
	\end{align}
	Since $f'(t)$ is positive and increasing, we can choose a $C\geq 1$ such that   the following estimate 
	\begin{align}\label{in Lemma bounding derivatives of f': first lower bound on tau and f'}
		\tau(t) = f'(t) \geq C^{-1}
	\end{align}
	holds for all $t\geq C$. Then we  recall the asymptotic expansion (\ref{asymptotics varphi VB})
	\begin{align*}
		\varphi(\tau(t)) = -\frac{m}{\mu} + O(1/\tau(t))
	\end{align*}
	with $\mu <0$ implying that $\varphi( \tau(t))$ is uniformly bounded from above because of (\ref{in Lemma bounding derivatives of f': first lower bound on tau and f'}). Together with (\ref{in Lemma bounding derivatives of f': relations tau and f}), this proves the upper bound for $f''(t)$ in (\ref{in Lemma bounding derivatives of f': claim for f''}). 
	For the lower bound, note that $f''(t)>0$ is increasing and thus is bounded from below by some positive constant if $t\geq C$. Inequality (\ref{in Lemma bounding derivatives of f': claim for f''}) now follows, and so does (\ref{f' grows like the function t}).

	Next, consider the case $j>0$, i.e. we estimate $f^{(2+j)}(t)$. Differentiating (\ref{in Lemma bounding derivatives of f': relations tau and f}) and using the chain rule, we see that
	\begin{align*}
		f'''(t) = \varphi'(\tau(t)) \frac{d \tau}{dt}(t)= \varphi'(\tau(t))\cdot f''(t).
	\end{align*}
	Taking further derivatives of this equation, we conclude that $f^{(2+j)}$ can be written as
	\begin{align}\label{in Lemma growht of derivatives of f'': sum formula for explicit expression}
		f^{(2+j)}= \sum_{\alpha} c_\alpha \cdot\varphi^{(\alpha_1)} \cdot \ldots \cdot \varphi^{(\alpha_i)} \cdot (f'')^{\,j},
	\end{align}
	where the sum is over all multi-indices $\alpha$ with $\alpha_1+\ldots + \alpha_i =j$ and $c_\alpha$ are constants only depending on the multi-index $\alpha$. Since $f''(t)$ satisfies (\ref{in Lemma bounding derivatives of f': claim for f''}), it is sufficient to estimate derivatives of $\varphi$. In fact, we have for all $\beta \in \mathbb{N}$ that
	\begin{align}\label{in Lemma growth of derivatives of f'': asympotic behaviour of derivatives of varphi}
	C^{-1 } \tau^{-\beta}\leq |\varphi^{(\beta)}(\tau) |\leq C \tau^{-\beta},
	\end{align}
	because  $\varphi(\tau)$ behaves asymptotically like a rational function  of the form $P/Q$ with polynomials $P(\tau),Q(\tau)$  having the same degree, see (\ref{explicit solution for varphi}). Substituting $\tau(t) = f'(t)$ in (\ref{in Lemma growth of derivatives of f'': asympotic behaviour of derivatives of varphi}) and combining the resulting estimate with (\ref{in Lemma growht of derivatives of f'': sum formula for explicit expression}), we finally obtain (\ref{in Lemma growth of derivatives of f}) as desired. 
\end{proof}

The important point about Lemma \ref{growth of derivatives of f} is estimate (\ref{f' grows like the function t}), i.e. that $f''(t)$ behaves like a constant and $f'(t)$ growths roughly like the function $t$ in the limit $t\to \infty$. 
 This will be crucial in the next section because we want to understand the asymptotic geometry of $\omega_{\varphi}$.

Another interesting consequence of Lemma \ref{growth of derivatives of f} is that the  metrics $\omega_{\varphi}$ have  bounded curvature and positive injectivity radius.
\begin{lemma} \label{curvature of g varphi is bounded}
	The curvature tensor of the steady solitons constructed in Theorem \ref{existence theorem line bundles} and \ref{existence theorem VB} is uniformly bounded and each of these metrics has positive  injectivity radius. 
\end{lemma}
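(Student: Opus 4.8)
The plan is to exploit the explicit momentum-profile description of $\omega_{\varphi}$ together with the growth estimates of Lemma \ref{growth of derivatives of f}. The key geometric observation is that away from the zero section $\omega_{\varphi}$ splits, in suitable frames, into three pieces: the pullback $\pi^*\omega_M$ (plus corrections), the fibre term $\tau\,(-\pi^*\gamma)$, and the radial term $\varphi^{-1}\sqrt{-1}\,\partial\tau\wedge\partialb\tau$, and the curvature of $\omega_{\varphi}$ can be computed in terms of $\varphi$, $\varphi'$, $\varphi''$, the background curvature of $(M,\omega_M)$, and the eigenvalues $\beta_j$ of $B$. So the first step is to write down (or quote from \cite{hwang2002momentum}) the full curvature tensor of a Calabi-type metric in a unitary coframe adapted to the fibration, obtaining an expression whose coefficients are universal polynomial/rational combinations of $\varphi,\varphi',\varphi''$ and $\tau$, together with the (bounded, since $M$ is compact) curvature of the background data.

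Second, I would feed in the asymptotics. By Lemma \ref{growth of derivatives of f}, for $t\geq C$ we have $\varphi(\tau(t))$ bounded above and below away from $0$, and $|\varphi^{(\beta)}(\tau)|\asymp \tau^{-\beta}$, while $\tau=f'(t)\asymp t\to\infty$. Hence every coefficient in the curvature expression that involves a derivative of $\varphi$ decays, and the surviving bounded terms are controlled by the compactness of $M$ and the boundedness of $\tau B$ acting on a fixed bundle. On the other (inner) region, where $\tau$ ranges over a compact subinterval of $[0,\tau_2)$ up to and including the zero section, completeness and smoothness of $\omega_{\varphi}$ across the zero section (Proposition \ref{completness proposition}) show the metric is a smooth metric on a neighbourhood of the compact zero section, hence has bounded curvature there by compactness. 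Splicing the two regions with a fixed overlap gives a uniform bound on $|\mathrm{Rm}|$ over all of $E$.

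Third, for the injectivity radius I would combine the curvature bound with a non-collapsing argument. One clean route: the metric $\omega_{\varphi}$ is, outside a compact set, asymptotic to a model cylinder/cone-type metric $dt^2 + (\text{metric of bounded geometry on the level sets})$ with $f''$ bounded between positive constants, so the level sets $\{t=\mathrm{const}\}$ carry metrics of uniformly bounded geometry (they are circle-bundles over $M$ with fixed topology and controlled fibre length); this prevents collapse along the end. Near the zero section, positivity of the injectivity radius again follows from smoothness and compactness. Alternatively, since we already have two-sided control $C^{-1}\le f''\le C$ and a curvature bound, one can invoke Cheeger--Gromov--Taylor-type lower bounds for $\mathrm{inj}$ in terms of a lower volume bound on unit balls, and verify the volume lower bound directly from the explicit volume form $\varphi(\tau)\,Q(\tau)\,dt\wedge(\text{base volume})\wedge d\theta$, which is bounded below on unit-radius balls because $\varphi$ and $Q$ are bounded below there.

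The main obstacle I anticipate is bookkeeping in the first step: getting an honest, uniformly-valid expression for the full Riemann tensor of $\omega_{\varphi}$ (not just the Ricci form, which is already recorded in \eqref{Ricci form spelled out}) in a frame that behaves well both near the zero section and along the end, and checking that the background curvature of $\omega_M$ really enters only through bounded quantities despite the twisting by $\tau B$. Once that formula is in hand, the decay of all $\varphi$-derivative terms from Lemma \ref{growth of derivatives of f} makes the curvature bound essentially immediate, and the injectivity-radius statement reduces to the non-collapsing/volume estimate sketched above, which is routine given the explicit volume form.
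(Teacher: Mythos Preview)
Your treatment of the curvature bound matches the paper's: both reduce the full Riemann tensor to expressions in $f'',f''',f^{(4)}$ (equivalently $\varphi,\varphi',\varphi''$) together with the bounded background data on the compact base, and then invoke Lemma~\ref{growth of derivatives of f}. The paper simply asserts this reduction as ``straight forward''; your plan to quote the Hwang--Singer curvature formulae is a reasonable way to flesh it out.

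For the injectivity radius, your second route (Cheeger--Gromov--Taylor reduced to a lower volume bound on unit balls) is exactly what the paper does, but your justification of the volume bound has a genuine gap. You write that the level sets $\{t=\mathrm{const}\}$ ``carry metrics of uniformly bounded geometry'' and that the explicit volume form is ``bounded below on unit-radius balls because $\varphi$ and $Q$ are bounded below there''. Neither claim does the work you need. The metric on the cross-section $S$ is $g_{S^1}+t\,\pi^*\hat g+\pi^*g_D$, which \emph{scales} in the $\hat g$-positive directions, so these slices do not have uniformly bounded geometry in any naive sense; and while the volume density $f''\,Q(f')$ is indeed bounded below (in fact it grows like $t^k$), the coordinate extent of a $g_\varphi$-unit ball in the $\hat g$-positive directions shrinks like $t^{-1/2}$, so the coordinate volume of a unit ball decays like $t^{-k}$. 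The lower volume bound only holds because these two effects cancel exactly, and that cancellation has to be argued, not asserted.

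The paper carries this out carefully: it passes to the comparison metric $g_t=dt^2+(Jdt)^2+t\,\pi^*\hat g+\pi^*g_D$, produces explicit inclusions of product-type regions into the unit ball, pushes the estimate down to $M$ via the circle fibration, and then---crucially---splits into two cases according to whether $\hat g$ is positive definite or degenerate. In the degenerate case one cannot simply rescale: the paper uses Frobenius' theorem on $\operatorname{Ker}\gamma$ to obtain local product charts in which the metric is uniformly equivalent to $(1+\tau)g_{\mathbb C^k}+g_{\mathbb C^{n-k}}$, and only then does the volume computation close. Your sketch does not anticipate this degenerate case at all, and without it the argument fails precisely when $\gamma$ has zero eigenvalues---which is one of the main situations the paper is designed to cover.
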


\begin{proof}
	It is straight forward to see that the first claim reduces to bounding $f''(t), f''' (t)$ and $f^{(4)}(t)$, where $\varphi$ and $f$ are related by (\ref{compute f from varphi}), so we focus on the second one. 
	
	According to \cite{cheeger1982finite}[Theorem 4.7], the lower bound on the injectivity radius follows if we can bound the volume of all unit balls uniformly from below. For this, recall that the function $t$ identifies $E\setminus D \cong \mathbb{R} \times S$, where $S$ is the $S^1$-bundle associated to $\mathcal{O}_{\mathbb{P}(E)}(-1) \to \mathbb{P}(E)$, see (\ref{diagram of tautological bundle}). 
	Under this identification, the metric $g_\varphi$ admits the following decomposition on $\mathbb{R} \times S$
	\begin{align}\label{in Lemma on injectivity radius: metric g varphi in coordinates}
	g_\varphi = f''(t) \left( dt^2  + \left(Jdt\right)^2  \right) + f'(t) \pi ^*\hat{g} + \pi ^*g_D,
	\end{align}
	where $J$ denotes the complex structure on $E$, and $\hat{g}$, $g_D$ are the (2,0) tensors associated to $-\gamma$, $\omega_D$, respectively. By compactness of $D$, we only have to consider the set $\{t\gg 1\}$, on which  $g_\varphi$ is uniformly equivalent to the metric 
	\begin{align}\label{metric uniformly equivalent to g varphi}
		g_t:=dt^2  + \left(Jdt\right)^2 + t \pi ^*\hat{g} + \pi ^*g_D,
	\end{align}
	compare Lemma \ref{growth of derivatives of f}. Let us further denote $g_{S^1}:= (Jdt)^2$ and rescale $g_t$ by some fixed constant so that the diameter $\operatorname{diam}(S,g_{S^1})$ of each $S^1$-fibre  satisfies $\operatorname{diam}(S,g_{S^1})=1/4$. It then suffices  to bound the volume of unit balls w.r.t. $g_t$ on the set $\{t\gg 1\}$ uniformly from below. 

	Let $x\in E$ with $t(x)\gg 1$ and denote the  unit ball of $g_t$ around $x$ by $B_{g_t} (x,1)$. We introduce families of metrics on $M$ and $S$ by declaring 
	\begin{align*}
		g_{M,\tau} &:= \tau \hat g + p^* g_D \\
		g_{S,\tau}&:= g_{S^1} + \pi ^* g_{M,\tau}
	\end{align*}
	for each $\tau \geq 1$, where $p:M \to D$ is the projection as in (\ref{diagram of tautological bundle}) and $\pi : S\to M$. In particular, the projection $\pi $ becomes a Riemannian submersion $\pi :(S,g_{S,\tau}) \to (M,g_{M,\tau})$. Using this notation and writing $x=(t(x),y)$, we obtain the following inclusion
	\begin{align*}
		B:= [t(x)-1/2,t(x)] \times B_{g_{S,t(x)}} (y, 1/2) \subset B_{g_t}(x,1).
	\end{align*}
	This is an immediate consequence of the decomposition (\ref{metric uniformly equivalent to g varphi}) together with the fact that for all $p \in B$ we  have $t(p)\leq t(x)$ and $g_{S,\tau_0} \leq g_{S,\tau_1}$ for all $\tau_0\leq \tau _1$.
	Before estimating  the $g_t$-volume $\operatorname{Vol}_{g_t}(B_{g_t}(x,1))$ of the unit ball $B_{g_t}(x,1)$, we observe that 
	\begin{align}\label{g S t -1/2 from below}
		g_{S,t(x)-1/2} = g_{S,t(x)} - \frac{1}{2} \pi ^* \hat g \geq g_{S,t(x)} - \frac{1}{2} (t(x)-1) \pi ^* \hat g \geq \frac{1}{2} g_{S,t(x)}
	\end{align}
	provided $t(x)\geq 2$. Using the inclusion $B\subset B_{g_t}(x,1)$ then implies that
	\begin{align*}
		\operatorname{Vol}_{g_t}(B_{g_t}(x,1))
		 &\geq \operatorname{Vol}_{g_t} (B )\\
		&\geq  \frac{1}{2} \cdot \operatorname{Vol}_{g_{S,t(x) -1/2}} ( B_{g_{S,t(x)}} (y,1/2) )\\
		&\geq 2^{-\frac{\operatorname{dim}_{\mathbb{R}}S }{2} -1} \operatorname{Vol}_{g_{S,t(x)}} ( B_{g_{S,t(x)}} (y,1/2) ),
	\end{align*}
	where we applied Fubini's theorem in the second line, and the last inequality follows from (\ref{g S t -1/2 from below}). Thus, it remains to bound the $g_{S,t(x)}$-volume of $ B_{g_{S,t(x)}} (y,1/2) $ uniformly from below. 
	
	We further reduce this volume bound to an integration on $M$ by observing that the projection $\pi : S \to M$ satisfies
	\begin{align}\label{inclusion of balls along pi}
		\pi ^{-1} (B_{g_{M,t(x)}} (\pi (y), 1/4 )  ) \subset B_{g_{S,t(x)}} (y, 1/2)
	\end{align}
	Indeed, given a $b \in B_{g_{M,t(x)}} (\pi (y), 1/4 )$ and a length-minimizing curve $q:[0,1] \to M$ from $\pi (y)$ to $b$, we may lift $q$ to a horizontal curve $\tilde q$  in $S$ from $\tilde{q}(0)=y$ to some point $\tilde{q}(1) \in \pi^{-1}(b)$. For any $a \in \pi ^{-1}(b)$,  the triangle inequality for the distance function $\operatorname{dist}_{g_{S,t(x)}}$ then yields
	\begin{align*}
		\operatorname{dist}_{g_{S,t(x)}} (y,a) 
		&\leq \operatorname{dist}_{g_{S,t(x)}} (y, \tilde{q}(1)) +\operatorname{dist}_{g_{S,t(x)}} (\tilde{q}(1), a)  \\
		&\leq  \operatorname{dist}_{g_{M,t(x)}} (\pi (y), b)   +\frac{1}{4} \\
		&< \frac{1}{4} + \frac{1}{4} = \frac{1}{2},
	\end{align*}
	where the second inequality holds since we normalised each fibre $\pi ^{-1}(b)$ to be of diameter $1/4$ and the third one follows since $\pi :S \to M$ is a Riemannian submersion. Hence, we conclude that $ a \in B_{g_{S,t(x)} (y,1/2)}$ as claimed.  
	
	Inclusion (\ref{inclusion of balls along pi}) yields an estimate on the $g_{S,t(x)}$-volume as follows. We write $\omega_{t(x)}$ for the K\"ahler form of $g_{M,t(x)}$ and $\chi$ for the characteristic function of the ball $B_{g_{S,t(x)}}(y,1/2)$, and then observe that
	\begin{align*}
		\int _{B_{g_{S,t(x)}} (y,1/2)  } (Jdt) \wedge \pi ^* \omega^{\operatorname{dim}_{\mathbb{C} M}    } _{t(x)} &= \int _{B_{g_{M,t(x)}} (\pi(y),1/2  )  } \pi _* (\chi  Jdt) \cdot \omega_{t(x)} ^{\operatorname{dim}_\mathbb{C} M}  \\
		& \geq  \int _{B_{g_{M,t(x)}} (\pi(y),1/4  )  } \pi _* ( Jdt) \cdot \omega_{t(x)} ^{\operatorname{dim}_\mathbb{C} M}  \\
		& = \operatorname{Vol}_{g_{S^1}} (S^1) \cdot \int _{B_{g_{M,t(x)}} (\pi(y),1/2  )  }  \omega_{t(x)} ^{\operatorname{dim}_\mathbb{C} M}.  
	\end{align*} 
	Here, $\pi_* (\chi Jdt) $ denotes the function on $M$ obtained by integrating $\chi Jdt$ over fibres, i.e. $\pi _* (\chi Jdt) (b) = \int_{\pi^{-1}(b)} \chi Jdt$, so that the first equality follows from Fubini's theorem. In the second line, we used that $\chi \equiv 1$ on the set $\pi ^{-1} (B_{g_{M,t(x)}}   (\pi (y)  ,1/4))$ by (\ref{inclusion of balls along pi}) and the final equation holds because  the volume of each $S^1$-fibre is the same by (\ref{metric uniformly equivalent to g varphi}).  Thus, it remains to 
		find a constant $C>0$, independent of $x=(t(x),y)$, such that 
	\begin{align}\label{the desired lower volume bound for g M t(x)}
		\operatorname{Vol}_{g_{M,t(x)}} (B_{g_{M,t(x)}} (\pi(y) ,1/4)) \geq C^{-1} >0.
	\end{align}

To prove this, let us first assume that $\hat g$ is positive definite. Then we can find a constant $C_0>0$, only depending on the eigenvalues of $\hat g$ w.r.t. $g_M$, such that 
\begin{align*}
	C_0^{-1} \tau g_M \leq g_{M,\tau} \leq C_0 \tau g_M,
\end{align*}
for all $\tau \geq 1$ and with $g_M:= g_{M,1}$. Since we can rescale by a fixed constant, it suffices to bound the $\tau g_M$-volume of $B_{\tau g_M}(z,1) $ from below by a constant independent of both $z \in M$ and $\tau \gg 1$. We note that
\begin{align*}
	B_{\tau g_M} (z,1) = B_{g_M}(z,\tau ^{-\frac{1}{2}})
\end{align*}
and by compactness, the $g_M$-volume $\operatorname{Vol}_{g_M}(B_{g_M} (z,\tau ^{-\frac{1}{2}}) )$ is, up to some uniform constant, bounded from below by $\tau^{-\frac{\operatorname{dim}_{\mathbb{R}M}}{2}}$ for $\tau $ sufficiently large. This shows that $\operatorname{Vol}_{\tau g_M} (B_{\tau g_{M}}   (z,1)) = \tau^{\frac{\operatorname{dim}_{\mathbb{R}M}}{2}} \operatorname{Vol}_{g_M}(B_{g_M} (z,\tau ^{-\frac{1}{2}}) )$ is indeed uniformly bounded from below and (\ref{the desired lower volume bound for g M t(x)}) then follows.

Let us now assume that $\gamma$ has at least one zero eigenvalue w.r.t. $g_M$. Since these eigenvalues are assumed to be constant over $M$, its Kernel $\operatorname{Ker}\gamma$ defines a proper subbundle of $T^{1,0}M$. Moreover, $\gamma$ is  closed, so that $\operatorname{Ker}\gamma$ is integrable according to Frobenius' theorem.
 Thus, if $n$ is the complex dimension of $M$ and $k$ the number of positive eigenvalues of $\gamma$, we find  a chart around each point defined on some neighborhood of the Euclidean unit ball $B(1)\subset \mathbb{C}^n\cong \mathbb{C}^k \times \mathbb{C}^{n-k}$ around the origin such that each slice $\{z_0\} \times \mathbb{C}^{n-k}$ in $B(1)$ is an integral manifold for $\operatorname{Ker}\gamma$, i.e. 
 \begin{align*}
 	\gamma(v,v)>0 \;\; \text{ and } \;\; \gamma(v,w)=0 \;\; \text{ for all } v\in T\mathbb{C}^k, \, w \in T\mathbb{C}^{n-k}.
 \end{align*}
 By compactness, we can cover $M$ by finitely many of such Euclidean balls $B_j(1)$ for $j=1,\dots,N$ and also find a constant $C_0>0$ such that
 \begin{align*}
 	C_0^{-1} g_{\mathbb{C}^n} \leq g_M \leq C_0 g_{\mathbb{C}^n} \;\; \text{ on each } B_j(1),
 \end{align*}
 where $g_{\mathbb{C}^n}$ is the Euclidean metric on $B_j(1)$ and the constant $C_0$ is independent of the ball $B_j(1)$.
 
  For $\tau\geq 1$, we also   consider the following product metric 
 \begin{align*}
 	g_{\mathbb{C}^n ,\tau} := (1+\tau ) g_{\mathbb{C}^k} + g_{\mathbb{C}^{n-k}} \;\; \text{ on } \;\; \mathbb C ^n \cong \mathbb{C}^k \times \mathbb C^{n-k}
 \end{align*}
 Then there exists a uniform constant $C>0$, which only depends on $C_0$ and the $g_M$-eigenvalues of $\gamma$, such that 
 \begin{align}\label{g C tau equivalent to g M tau}
 	C^{-1} g_{\mathbb{C}^n,\tau} \leq g_{M,\tau} \leq C g_{\mathbb{C}^n,\tau} \;\; \text{ on each } \;\; B_j(1).
 \end{align}
Let $\varepsilon>0$ be the Lebesgue number associated to the cover $\{B_j(1)\}_{j=1,\dots N}$ of the manifold $(M,g_M)$, i.e.  the ball $B_{g_M}(z,\varepsilon)$ is contained in $B_j(1)$ for some $j$.
Note that since $g_M\leq g_{M,\tau}$, we also have $B_{g_{M,\tau}} (z,1) \subset B_{g_M}(z,1) \subset B_j(1)$. Additionally, we may assume that $\varepsilon<1$, so that it suffices to bound the $g_{M,\tau}$-volume of the smaller ball $B_{g_{M,\tau}} (z,\varepsilon)$  from below because the constant $\varepsilon>0$ is independent of both $z\in M$ and $\tau \geq 1$. 

This, in turn, can be reduced to bounding the $g_{\mathbb{C}^n,\tau}$ -volume of $B_{g_{\mathbb{C}^n,\tau}} (z,C^{-\frac{1}{2}} \varepsilon)$.  Indeed, this is a direct consequence of  (\ref{g C tau equivalent to g M tau}), which implies that the volume forms of $g_{\mathbb{C}^n,\tau}$ and $g_{M,\tau}$ are uniformly equivalent on $B_{g_{M,\tau}}(z,\varepsilon)$ and also that we have the inclusion
\begin{align*}
	B_{g_{\mathbb{C}^n,\tau}} (z, C^{-\frac{1}{2}} \varepsilon ) \subset B_{g_{M,\tau}}(z,\varepsilon). 
\end{align*} 
For the remaining lower volume bound, observe that the following product of Euclidean balls
\begin{align}\label{product of euclidean balls for fubini}
	B_{g_{\mathbb{C}^k}} \left(z, \frac{1 }{2} C^{-\frac{1}{2}} \varepsilon(1+\tau)^{-\frac{1}{2}}\right) \times B_{g_{\mathbb{C}^{n-k}}} \left(z, \frac{1 }{2} C^{-\frac{1}{2}} \varepsilon\right)
\end{align}
is contained in $B_{g_{\mathbb{C}^n,\tau}} (z,C^{-\frac{1}{2}}  \varepsilon )$. Applying Fubinis' theorem to the product (\ref{product of euclidean balls for fubini}) and using the fact that the volume form of $g_{\mathbb{C}^n,\tau}$ is equal to $(1+\tau)^k$-times the volume form of $g_{\mathbb{C}^n}$ then yields the required lower bound on the $g_{\mathbb{C}^n,\tau}$-volume of $B_{g_{\mathbb{C}^n,\tau}} (z, C^{-\frac{1}{2} } \varepsilon)$, which is independent of both $z\in M$ and $\tau \geq 1$. This finishes the proof.

\end{proof}

	\section{Uniqueness in a K\"ahler class} \label{section uniqueness}

The purpose of this section is to prove Theorem \ref{uniqueness theorem in intro}. We begin by briefly recalling notation from Sections \ref{section calabis ansatz for line bundles} and \ref{section calabis ansatz for VB} and then define the function spaces appearing in Theorem \ref{uniqueness theorem in intro}. We also explain how to reduce the proof to a $\partial \partialb$-Lemma, which is stated below (Theorem \ref{del del bar lemma}).

\subsection{A $\partial \bar \partial$-Lemma}
Throughout this section, let $\pi : E\to D$ be a rank $m$ holomorphic vector bundle over a compact K\"ahler manifold $(D,\omega_D)$. The complex dimension of $E$ (as a manifold) is denoted by $m+ d$, where $d$ is the complex dimension of $D$. If $m=1$, we assume that it satisfies the conditions in Theorem \ref{existence theorem line bundles}, and if $m\geq 2$, we assume $E$ is given as in Theorem \ref{existence theorem VB}. Also recall that we defined a radial function $r: E\to \mathbb{R}_{\geq0} $ by $r(v)= \sqrt{h(v,\bar v)}$, which vanishes along the zero section of $E$ and we set $t:= 2\log r$. Note that we can use the function $t$ to identify $E$, with its zero section removed, as the product $\mathbb{R} \times S$, where $S$ is the $S^1$-bundle associated to $\mathcal{O}_{\mathbb{P}(E)}(-1)\to \mathbb{P}(E)$, see Diagram (\ref{diagram of tautological bundle}).  Under this identification, the function $t$ on $E\setminus D$ corresponds to the projection onto the first factor of $\mathbb{R} \times S$. 

Let $\omega_{\varphi}$ be the K\"ahler Ricci soliton constructed in Theorem \ref{existence theorem line bundles} or \ref{existence theorem VB}, i.e. $\omega_{\varphi} $ is defined by (\ref{spelling out calabis ansatz in terms of varphi}) with $\varphi$ satisfying (\ref{ODE soliton}) if $E$ is a line bundle or by (\ref{Calabi ansatz for VB}) and (\ref{VB soliton ODE}) if $m \geq 2$. We denote the corresponding Riemannian metric by $g_\varphi$.

On the manifold $\mathbb{R} \times S$, we can write the metric $g_\varphi$ as follows. If  $J$ denotes  the complex structure on $E$ and  $g_D$ and $\hat{g}$ are the (2,0) tensors associated to $\omega_D$ and $-\gamma$, respectively, then
\begin{align}\label{metric g varphi in coordinates}
g_\varphi = f''(t) \left( dt^2  + \left(Jdt\right)^2  \right) + f'(t) \pi ^*\hat{g} + \pi ^*g_D,
\end{align}
where $f$ can be reconstructed from $\varphi$ via (\ref{compute f from varphi}). 
We would also like to point out that we allowed $-\gamma$ to have zero-eigenvalues, i.e. $\hat{g}$ is only semidefinite. As a consequence, the volume growth of $g_\varphi$ will be determined by the  zero-eigenvalues of $-\gamma$. 

Before stating the main theorem of this section, 
we require a definition of  weighted function spaces. As a weight function, we choose $w: E \to \mathbb{R}_+ $ to be defined by
\begin{align}\label{definition of weight function}
w(t):= 1+ f'(t).
\end{align}
This choice is inspired by the work of Hein \cite{hein2011weighted}. Indeed, the following lemma shows that $w$ has the same properties as the function $\rho$ in \cite{hein2011weighted}[Theorem 1.6]. 

 \begin{lemma} \label{weight function equivalent to distance}
	Fix $x_0\in E$   and denote the distance function of $g_\varphi$ by $\rho(x)$. Then there exists a constant $C>0$ such that 
	\begin{align}\label{in lemma: w is comparable to the distance function}
	C^{-1}w(t(x))\leq (1+\rho(x)) \leq C w(t(x))
	\end{align}
	for all $x\in E$ with $w(t(x))\geq C$. Moreover, $w$ satisfies 
	\begin{align} \label{in lemma: w has bounded gradient and Lapalce bound}
	|\nabla w| + w |\Delta w| \leq C,
	\end{align}
	where $|\cdot|, \nabla$ and $\Delta$ are associated with $g_\varphi$. 
\end{lemma}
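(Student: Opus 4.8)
The plan is to read off both statements from the explicit form of the metric, namely
\[
g_\varphi = f''(t)\,\bigl(dt^2+(Jdt)^2\bigr) + f'(t)\,\pi^*\hat g + \pi^*g_D
\]
in (\ref{metric g varphi in coordinates}), combined with the growth estimates for $f$ from Lemma \ref{growth of derivatives of f}. Since $w=1+f'(t)$ depends only on $t$ and $g_\varphi$ is, in the $t$ and $S^1$-directions, a warped product over the \emph{compact} manifold $S$, everything reduces to one-variable computations. Note first that $f''>0$ forces $f'$, hence $w$, to be strictly increasing in $t$, so the hypothesis ``$w(t(x))\ge C$'' just means ``$t(x)\ge t_C$''; and for $t\ge C$ Lemma \ref{growth of derivatives of f} gives $C^{-1}\le f''\le C$ and $C^{-1}t\le f'(t)\le Ct$, so $w(t)$ is comparable to $t$ in that range. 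Thus (\ref{in lemma: w is comparable to the distance function}) will follow once I show that $\rho(x)$ is comparable to $t(x)$ for $t(x)$ large.

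For the upper bound I would join $x_0$ to $x$ by a curve made of three pieces: a path of bounded length from $x_0$ into the fixed compact core $\{t\le T_0\}$; the radial segment at a fixed point of $S$ running from level $T_0$ to level $t(x)$, of $g_\varphi$-length $\int_{T_0}^{t(x)}\sqrt{f''(s)}\,ds\le\sqrt{C}\,t(x)$; and a path inside the slice $\{t=t(x)\}\times S$ reaching $x$. By (\ref{metric g varphi in coordinates}) the induced metric on that slice is $(Jdt)^2+f'(t(x))\,\pi^*\hat g+\pi^*g_D$, and since $S$ is compact with $\pi^*\hat g\le Cg_M$, $\pi^*g_D\le Cg_M$, its diameter is $\le C\bigl(1+\sqrt{f'(t(x))}\bigr)\le C\,w(t(x))$. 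Adding up and using $t(x)\le Cf'(t(x))$ gives $1+\rho(x)\le C'w(t(x))$. For the lower bound I would note that any curve $\gamma$ from $x_0$ to $x$ satisfies $|\dot\gamma|_{g_\varphi}\ge\sqrt{f''(t\circ\gamma)}\,|(t\circ\gamma)'|$, so on the part of $\gamma$ lying in $\{t\ge T_0\}$ (where $f''\ge C^{-1}$) the length is at least $C^{-1/2}$ times the total variation of $t$, which is $\ge t(x)-T_0$; hence $\rho(x)\ge C^{-1/2}(t(x)-T_0)\ge c\,w(t(x))$ for $t(x)$ large. The possible degeneracy of $\hat g$ only shortens curves, so it does not affect this bound.

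For (\ref{in lemma: w has bounded gradient and Lapalce bound}) I would compute directly. From $dw=f''(t)\,dt$ and the fact that, by (\ref{metric g varphi in coordinates}), $|dt|^2_{g_\varphi}=1/f''(t)$, one gets $|\nabla w|^2=f''(t)$, which is bounded since $f''=\varphi(f'(\cdot))$ and $\varphi$ extends continuously to $[0,\infty]$ (it vanishes at $0$ and tends to $-m/\mu$ at infinity by (\ref{asymptotics varphi VB})). For the Laplacian I would use $\Delta w=f'''(t)|\nabla t|^2+f''(t)\Delta t=\varphi'(\tau)+f''(t)\Delta t$ with $\tau=f'(t)$, and compute $\Delta t$ from the identity $\sqrt{-1}\,\partial\partialb t=-\pi^*\gamma$ of (\ref{pullback of curvature form}): tracing with $\omega_\varphi$ and using that $\gamma$ has eigenvalues $\beta_j/(1-\tau\beta_j)$ with respect to $\omega_M-\tau\gamma$ yields $\Delta t=\pm 2\,Q'(\tau)/Q(\tau)$, hence $f''(t)\Delta t=\pm 2\,\varphi(\tau)Q'(\tau)/Q(\tau)$. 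It then remains to see that $w\varphi'(\tau)$ and $w\varphi(\tau)Q'(\tau)/Q(\tau)$ are bounded: (\ref{in Lemma growth of derivatives of f}) gives $|\varphi'(\tau)|\le C/w$, while $\varphi Q'/Q$ is bounded on $[0,\infty)$ and $O(1/\tau)=O(1/w)$ as $\tau\to\infty$ — in the rank $\ge 2$ case the pole of $Q'/Q$ at $\tau=0$ coming from the factor $\tau^{n-d}$ in (\ref{definition of Q hat}) is exactly killed by the vanishing $\varphi(\tau)\sim\tau$ there. Since moreover $\tau=f'(t)$ extends smoothly across the zero section, $w$ and $\Delta w$ are smooth on all of $E$ and hence automatically bounded on the compact set $\{t\le T_0\}$; this completes (\ref{in lemma: w has bounded gradient and Lapalce bound}).

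The main obstacle I anticipate is not a single estimate but handling the two sources of degeneracy at once: the directions in which $\hat g$ is only semidefinite (which will later govern the volume growth, and which must not be allowed to spoil the diameter comparison), and the collapsing of the $\mathbb{CP}^{m-1}$-fibre near the zero section when $m\ge 2$, where $Q$, $1/f''$ and $f'''$ all degenerate but the combinations entering $\Delta w$ stay bounded. Making the distance lower bound robust against curves that oscillate in $t$ also needs a little care, which is why I would phrase it via the total variation of $t\circ\gamma$ on the region $\{t\ge T_0\}$ rather than assuming monotonicity.
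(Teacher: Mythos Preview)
Your proposal is correct and follows essentially the same route as the paper: both arguments reduce (\ref{in lemma: w is comparable to the distance function}) to showing that $\rho(x)$ is comparable to $t(x)$ via Lemma~\ref{growth of derivatives of f}, prove the upper bound by an explicit path whose slice part has length $O(\sqrt{t})$ and whose radial part has length $O(t)$, and prove the lower bound from $|\dot\gamma|_{g_\varphi}\ge \sqrt{f''}\,|(t\circ\gamma)'|$ together with $f''\ge C^{-1}$ on $\{t\ge T_0\}$.

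The one place where the paper proceeds differently is in bounding $w|\Delta w|$. Rather than estimating $w\varphi'$ and $w\,\varphi\,Q'/Q$ separately as you do, the paper computes $\Delta f'=2(\varphi'+\varphi\,Q'/Q)$ and then invokes the soliton ODE (\ref{VB soliton ODE}) to rewrite this as $2(m+\mu\varphi)$, so that
\[
w\,\Delta w \;=\; 2(1+\tau)\bigl(m+\mu\varphi\bigr).
\]
The expansion $\varphi=-m/\mu+O(1/\tau)$ from (\ref{asymptotics varphi VB}) then gives the bound in one stroke, and simultaneously takes care of the behaviour near $\tau=0$ without having to argue separately that the pole of $Q'/Q$ at $\tau=0$ is cancelled by the zero of $\varphi$. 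Your approach also works, but the ODE shortcut is cleaner and makes explicit that the bound on $w\Delta w$ is really a manifestation of the soliton equation.
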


\begin{proof}
	We identify $E\setminus D \cong \mathbb{R} \times S$ and without loss of generality, we can assume $x_0=(t_0,y_0) \in \mathbb{R}\times S$. Let $(t,y) \in \mathbb{R}\times S$ with $t_0\leq t$ and consider a shortest path $q_{t,y}=(q_t,q_y):[0,1] \to \mathbb{R} \times S$ from $(t_0,y_0)$ to $(t,y)$. Its length $L(q_{t,y}) $ is given by 
	\begin{align*}
	L(q_{t,y}) = \int_0^1 \sqrt{g_\varphi \left( \dot{q}_{t,y} (\sigma), \dot{q}_{t,y} (\sigma) \right)} d\sigma.
	\end{align*}
	Then (\ref{in lemma: w is comparable to the distance function}) reduces to finding a constant $C>0$ such that 
	\begin{align}\label{in lemma: required bound of length function}
	C^{-1}	w(t) \leq L(q_{t,y}) \leq C w(t)
	\end{align}
	for all $y\in S$ and all $t\geq C$.  In fact, it is sufficient to show inequality (\ref{in lemma: required bound of length function}) with $w(t)$ replaced by $t$ since there is a $C>0$ such that 
	\begin{align} \label{in lemma: w equivalent tu t}
	C^{-1} t \leq w(t) \leq Ct
	\end{align}
	for $t\geq C$, compare (\ref{f' grows like the function t}).
	Thus, we begin by choosing $C>0$ such that (\ref{in lemma: w equivalent tu t}) holds, and we increase $C>0$ as we go along, if necessary. 
	
	For proving the lower bound in (\ref{in lemma: required bound of length function}), we estimate
	\begin{align*}
	L(q_{t,y})\geq \int _0^1 \sqrt{f''(q_{t,y})} \dot{q_t}(\sigma) d \sigma \geq \sqrt{f''(t_0)} (t-t_0),
	\end{align*}
	as required. 
	Before showing the upper bound, we conclude from (\ref{f' grows like the function t}) that
	\begin{align*}
	g_\varphi \leq C \left( dt^2 + t g_S   \right),
	\end{align*}
	where we define $g_S:= (Jdt)^2 + \pi ^* \hat{g} + \pi ^* g_D$ with $\hat{g}$ and $g_D$ as in (\ref{metric g varphi in coordinates}). Also observe that we can now assume $q_t$ to be the linear path in the $\mathbb{R}$-factor, i.e. $q_t(\sigma) = \sigma (t-t_0)+ t_0$. Then we obtain
	\begin{align} \label{in Lemma: equivalence of dist, upper bound on distance}
	L(q_{t,y}) &\leq C \int_0^1 \dot{q}_t (\sigma) d\sigma + C\int_0 ^1 \sqrt{q_t (\sigma) } \cdot\sqrt{g_S(\dot{q_y}(\sigma), \dot{q_y}(\sigma)      } d\sigma \nonumber \\ 
	&\leq C t+ C\operatorname{diam}(S,g_S) \sqrt{t}  \\
	&\leq C t, \nonumber
	\end{align}
	for all $t$ sufficiently large and with $\operatorname{diam}(S,g_S)$ denoting the diameter of the compact manifold $(S,g_S)$. Now (\ref{weight function equivalent to distance}) follows immediately. 
	
	For the second claim, observe from (\ref{Calabi ansatz for VB}) that we have
	\begin{align*}
	|\nabla w|=  f'',
	\end{align*}
	which is uniformly bounded according to Lemma \ref{growth of derivatives of f}. For bounding the Laplace operator $\Delta w= \Delta f'$, recall that on a K\"ahler manifold, the Laplace operator $\Delta $ satisfies $\Delta = 2 \operatorname{tr}_{\omega_\varphi} \sqrt{-1} \partial \partialb $, where $\operatorname{tr}_{\omega_\varphi}$ denotes the trace computed w.r.to $g_\varphi$. Then we apply (\ref{vector bundle lie derivative}) to obtain
	\begin{align*}
	\Delta f'& = 2 \operatorname{tr}_{\omega_\varphi} \left( \sqrt{-1} \partial \partialb f' \right)\\
	&= \frac{2}{\mu} \operatorname{tr}_{\omega_\varphi}  \left( \mathcal{L}_X\omega_{\varphi}\right) \\
	&= 2 \left( \varphi \frac{Q'}{Q}+ \varphi'\right),
	\end{align*}
	where the last equality holds since there is the following formula
	\begin{align*}
	\operatorname{tr}_{\omega_\varphi}(-\pi ^* \gamma) = \frac{Q'}{Q},
	\end{align*}
	see \cite{hwang2002momentum}[(2.22)]. Using the soliton ODE (\ref{VB soliton ODE}), we continue
	\begin{align*}
	w \Delta w  &= (1+f') \Delta f'\\
	& = 2 (1+\tau) \left(  \varphi \frac{Q'}{Q} + \varphi'\right) \\
	&= 2 (1+\tau)\left(m+ \mu \varphi  \right),
	\end{align*}
	which is also uniformly bounded because of the asymptotic expansion (\ref{asymptotics varphi VB}). This, together with the uniform bound on $|\nabla w|$, implies (\ref{in lemma: w has bounded gradient and Lapalce bound}).
\end{proof}

Lemma \ref{weight function equivalent to distance} ensures that our definition of weighted function spaces below coincides with the one used in \cite{hein2011weighted}. These spaces are well-adapted to study the Laplace operator on a wide class of complete manifolds. 

\begin{defi}
	Let $\Lambda ^*T^* E$ be the exterior algebra of $T^*E$ and consider $\delta\in \mathbb{R}$ and $k\in \mathbb{N}_0$. We define $C^k_\delta (\Lambda ^* T^*E)$ to be the space of $k$-times continuously differentiable sections $\eta$  of $\Lambda ^* T^*E$ such that the norm
	\begin{align*}
	||\eta ||_{C^k_\delta} := \sum_{j=0}^k \sup _E |w^{j-\delta} \nabla ^j \eta  |
	\end{align*}
	is finite, where $w$ is given by (\ref{definition of weight function}) and $\nabla$, $|\cdot|$ are associated to $g_\varphi$. We also set
	\begin{align*}
	C^{\infty}_\delta (\Lambda ^* T^*E) := \bigcap_{k\in \mathbb{N}_0} C^k _\delta(\Lambda ^* T^*E).
	\end{align*}
\end{defi}
 In other words,  elements in $C^{\infty}_{\delta}(\Lambda^* T^*E) $ grow at most like $w^{\delta}$ and their $l$-th derivatives at most like $w^{\delta -l}$. 
 Having introduced the necessary notation, we can now state the main result of this section.

\begin{theorem}\label{uniqueness theorem}
	Let $\omega_{\varphi}$ be a steady K\"ahler-Ricci soliton constructed in Theorem \ref{existence theorem line bundles} or \ref{existence theorem VB}. Assume that $\omega$ is a K\"ahler-Ricci soliton on $E$ with the same vector field as $\omega_{\varphi}$ such that  $[\omega]=[\omega_{\varphi}] \in H^2 (E)$. If moreover $\omega_{\varphi}-\omega \in C_{-\delta}^{\infty}(\Lambda^2 T^*E)$ for some $\delta>2$, then $\omega_{\varphi}= \omega$. 
	
\end{theorem}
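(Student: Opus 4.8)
The strategy is to reduce Theorem~\ref{uniqueness theorem} to \cite{Biquard17}[Proposition 1.2], which asserts that two steady solitons with the same vector field that differ by $\sqrt{-1}\partial\partialb u$ with $u$ and all its derivatives decaying at infinity must coincide. Since $[\omega]=[\omega_\varphi]$ and $\eta:=\omega_\varphi-\omega\in C^\infty_{-\delta}(\Lambda^2 T^*E)$ with $\delta>2$, the $(1,1)$-form $\eta$ is exact and decaying, so it suffices to produce a smooth potential $u\in C^\infty_{-\delta+2}(E)$ with $\sqrt{-1}\partial\partialb u=\eta$; then $2-\delta<0$ forces $u\to 0$ with all derivatives, and \cite{Biquard17}[Proposition 1.2] applies. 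This is exactly a $\partial\partialb$-Lemma with growth control, and I would carry it out following \cite{HAJOricciflat}[Section 3].

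First I would note that $d\eta=0$ and $\eta\in C^\infty_{-\delta}$ with $-\delta<0$; since $E$ is simply connected (it deformation retracts onto $D$\dots) -- actually more carefully, since $\eta$ is exact by hypothesis on cohomology classes, I would write $\eta=d\alpha$ for some $1$-form $\alpha$ and then improve $\alpha$. The key analytic input is a Hodge-theoretic decomposition on the complete manifold $(E,g_\varphi)$ in the weighted spaces $C^\infty_\delta$ of Hein \cite{hein2011weighted}: because $g_\varphi$ has bounded geometry (Lemma~\ref{curvature of g varphi is bounded}), positive injectivity radius, and the weight $w$ behaves like the distance function with $|\nabla w|+w|\Delta w|\le C$ (Lemma~\ref{weight function equivalent to distance}), Hein's theory gives solvability of $\Delta$ on weighted spaces in the relevant range of weights, together with a Liouville-type statement. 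Concretely, I would solve $\Delta\beta=\eta$ componentwise (or rather solve for a $1$-form $\xi$ with $d\xi=\eta$ and $d^*\xi=0$) in $C^\infty_{-\delta+2}$, using that $-\delta\in(-\infty,-2)$ is a non-exceptional weight for the Laplacian on a manifold whose volume growth is polynomial of the order dictated by the zero-eigenvalues of $-\gamma$. Then $\xi$ is a harmonic $1$-form (in the sense $\Delta\xi=0$) of growth $O(w^{-\delta+2})$; the heart of the matter is:

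\textbf{Vanishing of decaying harmonic $1$-forms.} I would invoke the Bochner technique: since $\Ric(g_\varphi)\ge 0$ by Theorem~\ref{sign of ricci curvature Thm}, any harmonic $1$-form $\xi$ with $\xi\to 0$ is parallel, and then $|\xi|$ is constant, hence zero. The decay $-\delta+2<0$ is what kills the parallel form; one has to be slightly careful to justify the integration by parts on the non-compact $E$, which is where the controlled geometry and the weight estimates from Lemma~\ref{weight function equivalent to distance} re-enter (a cutoff argument with $w$). Granting $\xi=0$, the original problem has a potential: writing the solution of $\Delta\beta=\eta$ and using $d\eta=0$, $d^*$ of the equation shows the would-be obstruction $1$-form is harmonic and decaying, hence vanishes, so $\eta$ is $d$-exact with a primitive in $C^\infty_{-\delta+1}$; then a second application of the same vanishing/solvability, now in the $\partialb$-complex, upgrades this to $\eta=\sqrt{-1}\partial\partialb u$ with $u\in C^\infty_{-\delta+2}(E)$ real-valued (the imaginary part is handled by the same Liouville statement, or by averaging over the $S^1$-action together with the fact that $\eta$ is $S^1$-invariant as the difference of two $S^1$-invariant forms -- note $\omega$ need not be $S^1$-invariant a priori, so I would instead rely purely on the Hodge/Bochner argument rather than symmetry).

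The main obstacle I expect is precisely the weighted Hodge theory step: verifying that $-\delta$ lies in the admissible (non-exceptional) range of weights for which $\Delta: C^\infty_{-\delta+2}\to C^\infty_{-\delta}$ is surjective on the relevant manifold, and that the cokernel is captured exactly by decaying harmonic forms. This requires knowing the precise volume growth of $g_\varphi$ — which, as the text emphasizes after \eqref{metric g varphi in coordinates}, is governed by the zero-eigenvalues of $-\gamma$ — and checking that $g_\varphi$ falls within the class of manifolds covered by \cite{hein2011weighted}. Once the mapping properties of $\Delta$ on weighted spaces and the Bochner vanishing for $\Ric\ge 0$ decaying harmonic $1$-forms are in place, the reduction to \cite{Biquard17}[Proposition 1.2] is essentially formal. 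A secondary technical point is passing from a real exact $2$-form annihilated by the right operators to an actual $\sqrt{-1}\partial\partialb$-potential, i.e.\ the genuine $\partial\partialb$-lemma part, which on a complete K\"ahler manifold with the above analytic package follows by the standard argument of solving $\Delta$ in the Dolbeault complex and using the Liouville statement to discard harmonic pieces.
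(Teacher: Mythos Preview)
Your overall architecture is correct and matches the paper: reduce to \cite{Biquard17}[Proposition 1.2] via a weighted $\partial\partialb$-Lemma, and prove the latter by combining Hein's weighted Laplace theory with a vanishing theorem for harmonic $1$-forms coming from $\operatorname{Ric}(g_\varphi)\geq 0$. However, two concrete steps are missing or would fail as written.

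\textbf{The low volume growth regime.} You treat the weighted solvability and Bochner vanishing as a single package, but the volume growth of $(E,g_\varphi)$ depends on the number of zero eigenvalues of $\gamma$: it is SOB$(k+1)$ with $k=\deg Q$. When $k\geq 2$, Hein's Theorem 1.6 indeed gives a solution $u$ to $\Delta u=\partial^*\theta^{1,0}$ with $|u|+|\nabla u|=O(w^{2-\delta+\varepsilon})$, and your Bochner argument (Lemma~\ref{vanishing lemma using ric geq 0}) kills $\partial u-\theta^{1,0}$. But when $k\leq 1$ (e.g.\ $\gamma$ has many zero eigenvalues), Hein's theory only produces $u$ with $\int|\nabla u|^2<\infty$, \emph{not} decay of $u$ or $\nabla u$. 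The harmonic $(1,0)$-form $\theta^{1,0}-\partial u$ is then merely $L^2$, and your maximum-principle Bochner argument does not apply. The paper handles this with a separate vanishing theorem (Theorem~\ref{vanishing theorem}): any $L^2$-holomorphic $(1,0)$-form on $(E,g_\varphi)$ vanishes. Its proof is not standard Bochner; it uses that $\beta=\pi^*j^*\beta+\partial h$ with $\iota_X\partial h$ an $L^2$-holomorphic function, hence zero, and then rules out nonzero pullbacks from the base by a growth estimate. You also need to check $\int\partial^*\theta^{1,0}\,\omega_\varphi^{m+d}=0$ to invoke the $k\leq 1$ existence statement.

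\textbf{Scalar versus form Laplacian, and the explicit primitive.} You propose to ``solve $\Delta\beta=\eta$ componentwise'' or to find $\xi$ with $d\xi=\eta$, $d^*\xi=0$ directly in $C^\infty_{-\delta+2}$. Hein's theorems in \cite{hein2011weighted} are for the \emph{scalar} Laplacian; they do not immediately yield a weighted Hodge decomposition on $1$- or $2$-forms. The paper sidesteps this by first constructing a primitive $\theta\in C^\infty_{1-\delta}(T^*E)$ with $d\theta=\eta$ by explicit fibre integration on $\mathbb{R}\times S$ (Proposition~\ref{prop primitive with growth }), not by PDE, and then solving only the \emph{scalar} equation $\Delta u=\partial^*\theta^{1,0}$. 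Finally, even in the $k\geq 2$ case Hein's estimate comes with an $\varepsilon$-loss, $u=O(w^{2-\delta+\varepsilon})$; the paper recovers $u\in C^\infty_{2-\delta}$ a posteriori from $du=\theta^{1,0}-\theta^{0,1}\in C^\infty_{1-\delta}$ via a line-integral argument (Lemma~\ref{proposition improving the rate of a function}). Your sketch does not account for this loss.
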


The main part of proving Theorem \ref{uniqueness theorem} will be 
a $\partial \partialb$-Lemma, with controlled growth. 
In fact, we will prove 

\begin{theorem} \label{del del bar lemma}
	Let $\delta>2$  and $\eta \in C^{\infty}_{-\delta} (\Lambda^*T^* E) $ be a real $(1,1)$ form. If $\eta$ is d-exact, then $\eta= \sqrt{-1} \partial \partialb u$ for some $u\in C^{\infty}_{2-\delta}(E)$. 
\end{theorem}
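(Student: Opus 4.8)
The plan is to follow the strategy of \cite{HAJOricciflat}[Section 3], adapted to the weighted setting of \cite{hein2011weighted}. Write $\eta = d\alpha$ for some $1$-form $\alpha$; the goal is to correct $\alpha$ by an exact term so that it becomes a sum of a $(1,0)$-form and its conjugate, after which the $(1,1)$-part of $d$ gives the potential. Concretely, I would first show that $\eta$ admits a primitive with good decay. Since $\eta \in C^{\infty}_{-\delta}$ with $\delta > 2$, Lemma \ref{weight function equivalent to distance} identifies $w$ with the distance function, so $\eta$ is in particular $L^2$ against the volume form (the volume growth of $g_\varphi$ is polynomial by (\ref{metric g varphi in coordinates}), and $-\delta < -2$ beats it once one checks the exponent against $\dim$), and more importantly lies in the weighted Hölder/Sobolev scale to which Hein's Fredholm theory for $\Delta$ on manifolds with bounded geometry and a weight function satisfying (\ref{in lemma: w has bounded gradient and Lapalce bound}) applies. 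Using that $\omega_\varphi$ has bounded curvature and positive injectivity radius (Lemma \ref{curvature of g varphi is bounded}) and non-negative Ricci curvature (Theorem \ref{sign of ricci curvature Thm}), I would solve $\Delta \beta = \alpha$ (or rather apply the Hodge-theoretic decomposition for $d + d^*$ in weighted spaces) to write $\alpha = d h + \xi$ with $\xi$ harmonic and $h$, $\xi$ in controlled weighted spaces; replacing $\alpha$ by $\alpha - dh$ we may assume $\alpha$ is coclosed, hence harmonic since $d\alpha = \eta$ has the right decay — more precisely one arranges $\alpha \in C^\infty_{1-\delta}$ or so.

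The key analytic input is then a \emph{vanishing theorem for harmonic $1$-forms of subquadratic (indeed decaying) growth}. This is exactly where non-negativity of the Ricci curvature enters, via the Bochner formula $\Delta_{\text{Hodge}} = \nabla^*\nabla + \Ric$ on $1$-forms: for a harmonic $1$-form $\zeta$ of growth $o(\rho)$ one integrates $\tfrac12\Delta|\zeta|^2 = |\nabla\zeta|^2 + \Ric(\zeta,\zeta) \ge 0$ against a cutoff $\chi_R^2$ supported on $B_{2R}\setminus B_R$, and the Caccioppoli-type estimate together with the weighted decay of $\zeta$ forces $\nabla \zeta = 0$ and then $|\zeta|^2$ constant; since $\zeta$ decays, $\zeta \equiv 0$. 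One has to make sure the decay rate of the harmonic $1$-form produced above is strong enough for this Moser/Bochner argument, and the bounded geometry plus the comparability of $w$ with the distance function (Lemma \ref{weight function equivalent to distance}) is what makes the cutoff estimates uniform. Conclude that the harmonic part $\xi$ vanishes, so $\alpha = dh$ is itself exact — but then $\eta = d\alpha = 0$? No: one must be careful not to over-solve. The correct bookkeeping is: decompose the \emph{$(0,1)$-part} $\alpha^{0,1}$, use $\bar\partial \alpha^{0,1} = 0$ coming from $\eta$ being of type $(1,1)$, solve a weighted $\bar\partial$-equation $\bar\partial v = \alpha^{0,1}$ (equivalently apply the $\bar\partial$-Hodge decomposition, whose harmonic part is again a harmonic $1$-form that vanishes by the Bochner argument), and set $u = 2\,\mathrm{Im}(v)$ or $u = -(v + \bar v)$ appropriately so that $\sqrt{-1}\partial\partialb u = \eta$; the weighted elliptic estimate for $\bar\partial$ then upgrades $v \in C^\infty_{2-\delta}$ from $\alpha^{0,1} \in C^\infty_{1-\delta}$, giving $u \in C^\infty_{2-\delta}(E)$ as claimed.

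The main obstacle I anticipate is the \emph{weighted elliptic/Hodge theory}: verifying that the Laplacian (and $\bar\partial$) is Fredholm or at least surjective onto the relevant weighted space with the expected gain of two derivatives in the weight, i.e. that $\Delta: C^{\infty}_{4-\delta} \to C^{\infty}_{2-\delta}$ behaves well and that there are no obstruction/cokernel terms for the weight $2-\delta < 0$. This needs either a direct application of Hein's existence theorem \cite{hein2011weighted}[Theorem 1.6 and its consequences] — which requires checking its hypotheses (bounded geometry, the weight conditions (\ref{in lemma: w has bounded gradient and Lapalce bound}), the volume growth / Sobolev inequality), all of which are available from Section 4 — or a Moser iteration argument to control the solution pointwise from an $L^2$ bound. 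Closely tied to this is making sure the \emph{cohomological hypothesis} $\eta$ $d$-exact is actually used: it guarantees a global primitive $\alpha$ to start with, and one must check this primitive can be chosen with at most mild (say sublinear) growth so that the harmonic representative it yields falls into the range where the Bochner vanishing applies; extracting such a primitive with growth control — rather than an a priori merely smooth one — is the delicate geometric point and is where the asymptotic structure $g_\varphi \sim dt^2 + (Jdt)^2 + t\,\pi^*\hat g + \pi^*g_D$ from (\ref{metric g varphi in coordinates}) must be exploited.
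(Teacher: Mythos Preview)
Your overall architecture matches the paper's: construct a primitive $\theta \in C^\infty_{1-\delta}$ by explicit fibrewise integration (your ``delicate geometric point'' is the paper's Proposition \ref{prop primitive with growth }), split $\theta = \theta^{1,0}+\theta^{0,1}$, solve a scalar elliptic equation for the potential, and kill the harmonic remainder via Ricci non-negativity. Two points need sharpening.

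First, the equation actually solved is the \emph{scalar} Poisson equation $\Delta u = \partial^*\theta^{1,0}$, using Hein's theorem on functions; there is no weighted $\bar\partial$-theory on forms available here. One then checks that $\partial u - \theta^{1,0}$ is harmonic and applies a vanishing theorem. Your phrasing ``solve $\bar\partial v = \alpha^{0,1}$'' hides this reduction and the fact that Hein's result only yields $u = O(w^{2-\delta+\varepsilon})$, not $O(w^{2-\delta})$; the exact rate is recovered a posteriori by integrating $du = \theta^{1,0}+\overline{\theta^{1,0}} \in C^\infty_{1-\delta}$ along rays (Lemma \ref{proposition improving the rate of a function}).

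Second, and this is the genuine gap: Hein's theorem bifurcates according to the volume-growth exponent $\beta = k+1$ of $(E,g_\varphi)$, where $k = \deg Q$ depends on how many eigenvalues of $\gamma$ vanish. When $k\geq 2$ one gets pointwise decay $|u|+|\nabla u| = O(w^{2-\delta+\varepsilon})$ and your Bochner/maximum-principle argument (Lemma \ref{vanishing lemma using ric geq 0}) applies directly. But when $k\leq 1$ the solution satisfies only $\int|\nabla u|^2\,\omega_\varphi^{m+d}<\infty$ (and one must first verify the mean-zero constraint $\int \partial^*\theta^{1,0}\,\omega_\varphi^{m+d}=0$, which uses $\delta>2$ and $k\leq 1$). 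In this regime $\partial u - \theta^{1,0}$ is only known to be $L^2$, not decaying, so your Bochner argument as stated does not close. The paper instead proves a separate vanishing theorem (Theorem \ref{vanishing theorem}): any $L^2$-holomorphic $(1,0)$-form on $(E,g_\varphi)$ vanishes. That proof is not pure Bochner; it exploits the bundle structure by writing $\beta = \pi^*j^*\beta + \partial h$, showing $\iota_X\partial h$ is an $L^2$-holomorphic function hence zero, and finally observing that nonzero forms pulled back from $D$ are never $L^2$. You should flag this dichotomy and the second vanishing mechanism.
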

Assuming this result,  Theorem \ref{uniqueness theorem} follows immediately.
\begin{proof}[Proof of Theorem \ref{uniqueness theorem}]
	By Theorem \ref{del del bar lemma}, there exists a $u\in C^{\infty }_{2-\delta}(E)$ such that $\omega_{\varphi}- \omega = \sqrt{-1} \partial \partialb u$. Since $2- \delta<0$, $u$ and all its derivatives tend to zero at infinity, so we can apply the maximum principle \cite{Biquard17}[Proposition 1.2] and  conclude that $\omega_{\varphi}= \omega$.
\end{proof} 

The remainder of this section is devoted to proving Theorem \ref{del del bar lemma}. 
 We follow the ideas for asymptotically conical metrics given in  \cite{HAJOricciflat}[Section 3], which rely on two main ingredients. Firstly, we need to understand solutions to Poisson's equation $\Delta u =h$ and their growth behaviour (Section \ref{section laplace operator}). 
 Secondly, we need to show that harmonic (1,0) forms of certain growth behaviour are identically zero (Section \ref{section vanishing theorem}). 
 The proof of Theorem \ref{del del bar lemma} will then be finished in Section \ref{del del bar lemma}.

\subsection{The Laplace Operator.} \label{section laplace operator} We start by considering the Laplace operator 
$\Delta$ of the metric $g_\varphi$ acting on suitably weighted H\"older spaces, which we now define.

\begin{defi}
	Let $\operatorname{dist}(x,y)$ be the distance between $x,y\in E$ measured w.r.to $g_\varphi$ and denote the injectivity radius of $g_\varphi$ by $i_0$. (Note that $i_0>0$ by Lemma \ref{curvature of g varphi is bounded}). For  $0<\alpha<1$ and $\delta \in \mathbb{R}$, we define a seminorm on the space of all tensor fields $T$ on $E$ by 
	\begin{align*}
		[T]_{C^{0,\alpha} _{\delta}}:= \sup_{\substack{x\neq y \in E\\ \operatorname{dist}(x,y)<\frac{i_0}{2} } }\left( \min (w(x),w(y) )^{-\delta} \frac{|T_x - T_y|}{\operatorname{dist}(x,y)^\alpha}   \right),
	\end{align*}  
	where the norm $|\cdot|$ is induced by $g_\varphi$ and the difference $T_x-T_y$ is defined by using  parallel transport along the minimal geodesic from $x$ to $y$. 
	
	The weighted H\"older space $C^{k,\alpha}_\delta(E)$ is then defined to be the subset of all $u\in C^{k}_{\delta}(E)$ for which the norm
	\begin{align*}
	||u||_{C^{k,\alpha}_\delta} := ||u||_{C^{k}_\delta} + [\nabla^k u ]_{C^{0,\alpha}_{\delta-k-\alpha}}
	\end{align*}
	is finite. 
\end{defi}

The Laplace operator $\Delta$ acts as
\[
\Delta: C^{2,\alpha} _{2+\delta } (E) \to C^{0,\alpha}_{\delta }(E), 
\]
 for any $\delta \in \mathbb{R}$ and we are interested in the surjectivity of this operator. A partial answer to this question is provided in \cite{hein2011weighted}.
 
  Given $h \in C^{0,\alpha}_\delta(E)$ with $\delta<-2$, we can essentially always solve Poisson's equation $\Delta u=h$, but it is not clear how the solution $u$ will behave as $t\to \infty$. This  depends on the volume growth  of $g_\varphi$, which is related to the degree $k$ of the  polynomial $Q$ defined in (\ref{definition Q LB}) for $m=1$ or (\ref{definition of Q hat}) for $m\geq 2$. Alternatively, it is evident from the definition of $Q$ that $k$ is equal to $m+d-1$ minus the number of zero-eigenvalues of $\gamma$. (Recall that $m+d-1$ is the complex dimension of $\mathbb P (E)$.)

More precisely, we will prove the following important proposition about the existence of solutions to $\Delta u =h$. 
\begin{prop}\label{Laplace is isomorphism}
	Let $\delta>2$ and suppose $h\in C^{0,\alpha}_{-\delta }(E)$.
	\begin{itemize}
		\item [(i)] If $k\leq 1$,  assume $\int h \omega_{\varphi}^{m+d} =0$ additionally. Then there exists a $u \in C^{2, \alpha}(E)$ such that $\Delta u=f$ and the integral $\int |\nabla u|^2 \omega_{\varphi}^{m+d}$ is finite.
		
		\item [(ii)] If $k>1$ and $2<\delta<k+1$, then there exists $u \in C^{2,\alpha}(E) $ such that $\Delta u=h$ and $u= O(w^{2-\delta +\varepsilon}) $ as well as $ |\nabla u|=O(w^{2-\delta+\varepsilon})$ for all $\varepsilon>0$.
	\end{itemize}
\end{prop}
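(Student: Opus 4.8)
The plan is to follow the weighted-analysis framework of Hein \cite{hein2011weighted}, which applies to $(M,g_\varphi)$ thanks to Lemma \ref{weight function equivalent to distance}: the weight $w=1+f'(t)$ is comparable to $1+\rho$ and satisfies $|\nabla w|+w|\Delta w|\leq C$, while Lemma \ref{curvature of g varphi is bounded} gives bounded geometry (bounded curvature, positive injectivity radius). The first step is to pin down the volume growth. From \eqref{metric g varphi in coordinates} and Lemma \ref{growth of derivatives of f}, the metric on $\{t\gg 1\}$ is uniformly equivalent to $dt^2+(Jdt)^2+t\,\pi^*\hat g+\pi^*g_D$; since $\hat g$ has exactly $k$ positive eigenvalues and $\tau=f'(t)\sim t$, the volume of the $g_\varphi$-ball of radius $R$ grows like $R^{2+k}$ (the $R^2$ from the $\mathbb R\times S^1$ directions, $R^k$ from the directions where $-\gamma$ is positive, and $R^0$ from $\operatorname{Ker}\gamma$). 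Equivalently, with $n=2(m+d)$ the real dimension, $g_\varphi$ has ``volume growth of order $2+k$'', so in Hein's terminology the relevant critical exponent is $\mu=2+k$, and $w$ plays the role of his function $\rho$.

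Next I would invoke the mapping properties of $\Delta$ on the weighted Hölder spaces. Hein's main existence theorem \cite{hein2011weighted}[Theorem 1.6 and its proof, cf. Proposition 2.1 there] states, for a complete manifold of bounded geometry with a weight $w$ as above and volume growth of order $\nu$, that $\Delta: C^{2,\alpha}_{\beta+2}\to C^{0,\alpha}_\beta$ is surjective for $\beta<-2$ when $\nu\le 2$ after imposing $\int h=0$ (case $k\le 1$, where $2+k\le 3$ but more precisely the parabolic/borderline regime), and more generally that for $h\in C^{0,\alpha}_{-\delta}$ with $-\delta<-2$ one can solve $\Delta u=h$ with $u$ of controlled growth governed by where $-\delta+2$ sits relative to the ``indicial'' values $0$ and $2-\nu=-k$. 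Concretely: if $2<\delta<k+1$ then $2-\delta\in(1-k,0)$ lies strictly between the two critical rates $2-\nu=-k$ and $0$, so there is a solution $u=O(w^{2-\delta+\varepsilon})$, $|\nabla u|=O(w^{2-\delta+\varepsilon})$ for every $\varepsilon>0$; this is exactly part (ii). For part (i), $k\le1$ means $\nu=2+k\le 3$; when $k=0$ ($\nu=2$) the manifold is essentially of linear volume growth along the fiber and one needs the orthogonality condition $\int h\,\omega_\varphi^{m+d}=0$ to solve, producing $u$ with finite Dirichlet energy $\int|\nabla u|^2<\infty$; when $k=1$ ($\nu=3$) one is still below the threshold where bounded solutions exist for free, and the same integral condition plus an energy estimate yields $u\in C^{2,\alpha}$ with $\int|\nabla u|^2<\infty$.

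The technical core, which I would spell out, is the a priori estimate feeding Hein's barrier/Moser-iteration machinery: using $w$ as a barrier, $\Delta(w^{a})\sim a(a-1)w^{a-2}|\nabla w|^2 + a w^{a-1}\Delta w$, and since $|\nabla w|$ is bounded below away from the zero section (it equals $f''\geq C^{-1}$ by Lemma \ref{growth of derivatives of f}) and $w\Delta w$ is bounded, one gets $\Delta(w^a)$ comparable to $w^{a-2}$ with a sign depending on $a(a-1)$; this furnishes super/sub-solutions that control the growth of $u$ at the two critical exponents $a=0$ and $a=2-\nu=-k$. Combined with the bounded-geometry Schauder estimates (legitimate by Lemma \ref{curvature of g varphi is bounded}) and a Sobolev/Faber–Krahn inequality of the form $\operatorname{Vol}(\Omega)\geq c\,(\operatorname{Vol}\, \partial\Omega)^{\nu/(\nu-1)}$ coming from the order-$\nu$ volume growth, one obtains the $L^2$-theory for the Dirichlet problem on exhausting domains and passes to the limit.

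The main obstacle I anticipate is verifying the precise volume-growth hypothesis and the Sobolev/Faber–Krahn inequality with the \emph{sharp} exponent $\nu=2+k$ uniformly on $E$ — in particular dealing with the directions in $\operatorname{Ker}\gamma$, where the metric does not expand, so that $(E,g_\varphi)$ is genuinely of ``mixed'' growth (expanding like a cone of dimension $2+k$ but with a bounded $\operatorname{Ker}\gamma$-factor). One must check that Hein's hypotheses (e.g. his $\mathrm{SOB}(\nu)$ or $\mathrm{HMV}$ conditions) are met by this anisotropic geometry; the ball-volume computation sketched above, localized via the coordinate charts adapted to $\operatorname{Ker}\gamma$ as in the proof of Lemma \ref{curvature of g varphi is bounded}, is what makes this work, but it requires care to get uniform constants. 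Once the volume growth and weight properties are in place, parts (i) and (ii) are direct citations of \cite{hein2011weighted} with $\beta=-\delta$ and the critical set $\{0, -k\}$.
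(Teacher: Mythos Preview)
Your overall approach coincides with the paper's: reduce to Hein's Theorems~1.5 and~1.6 after verifying the SOB hypothesis, and derive the gradient decay in (ii) from local Schauder estimates on balls of fixed radius (legitimate by the bounded geometry of Lemma~\ref{curvature of g varphi is bounded}). The paper isolates the SOB verification as a separate lemma (Lemma~\ref{lemma check SOB}), where it also checks the annulus-connectivity condition (i) of Definition~\ref{definition SOB}, which you do not address.

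There is, however, a concrete error in your volume-growth computation: the correct exponent is $k+1$, not $2+k$. The $S^1$-fiber direction carries the metric $f''(t)(Jdt)^2$ with $f''$ uniformly bounded (Lemma~\ref{growth of derivatives of f}), so each circle fiber has \emph{bounded} length and contributes $R^0$, not $R^1$, to the volume of a ball of radius $R$. Equivalently, from the volume form in \eqref{volume form for growth behaviour} one has $f''(t)Q(f'(t))\sim t^k$ on level sets, and integrating in $t$ over $[0,R]$ yields $R^{k+1}$. Thus $(E,g_\varphi)$ is SOB$(k+1)$, not SOB$(k+2)$.

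This error shifts all your thresholds. The critical rates are $0$ and $2-(k+1)=1-k$ (not $0$ and $-k$), which is precisely why the hypothesis in (ii) reads $2<\delta<k+1$, i.e.\ $2-\delta\in(1-k,0)$. Likewise, the dichotomy $k\le 1$ versus $k>1$ in the statement is exactly $\beta=k+1\le 2$ versus $\beta>2$, matching the split between Hein's Theorems~1.5 and~1.6; with your $\nu=k+2$ you are forced into an ad hoc treatment of the case $k=1$ (``still below the threshold\dots''). Once the exponent is corrected to $k+1$, your outline becomes the paper's proof.
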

Before proceeding with its proof, we first of all need to check that we can indeed apply Hein's work \cite{hein2011weighted}[Theorem 1.5, 1.6], i.e. we have to verify that the metric $(E,g_\varphi)$ satisfies Hein's condition SOB($\beta$). For the sake of completeness, we recall \cite{hein2011weighted}[Definition 1.1] here.
 \begin{defi}[{\cite{hein2011weighted}[Definition 1.1]}]  \label{definition SOB}
 	A Riemannian manifold 
 	$(M,g)$ is called SOB($\beta$) if there exists a $x_0 \in M$ and a constant $C\geq 1$ satisfying the following:
 	\begin{itemize}
 		\item[(i)] The set $B(x_0,s_1) \setminus \overline{B}(x_0,s_0)$ is connected for all $s_1>s_0\geq C$, 
 		\item[(ii)] $\operatorname{Vol} (B(x_0,s)) \leq C s^{\beta}$ holds for all $s\geq C$,
 		\item[(iii)] $\operatorname{Vol} (B(x,(1-C^{-1})\rho (x))) \geq C^{-1} \rho(x)^{\beta}$ holds for all $x\in M$ with $\rho(x)\geq C$,
 		\item[(iv)] $\operatorname{Ric}_x \geq -C \rho(x)^{-2}$ holds if $\rho(x)\geq C$.  
 	\end{itemize}
 Here $B(x_0,s)$ denotes the geodesic ball around $x_0$, $\operatorname{Vol}(B(x_0,s))$ its volume and $\rho(x)$ denotes the distance from $x$ to $x_0$. 
 \end{defi}
As the next lemma shows, the soliton metrics $(E,g_\varphi)$ constructed in Theorem \ref{existence theorem line bundles}  and \ref{existence theorem VB} are SOB($k+1$).

\begin{lemma} \label{lemma check SOB}
	The metric $(E,g_\varphi)$ is SOB($k+1$), where $k$ is equal to $m+d-1$ minus the number of zero-eigenvalues of the curvature form $\gamma$ on $\mathbb P (E)$. 
\end{lemma}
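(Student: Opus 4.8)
The plan is to verify the four conditions (i)--(iv) of Definition \ref{definition SOB} directly, using the explicit description of the metric $g_\varphi$ on $E \setminus D \cong \mathbb{R} \times S$ given in (\ref{metric g varphi in coordinates}) together with the growth estimates for $f$ from Lemma \ref{growth of derivatives of f} and the distance estimate from Lemma \ref{weight function equivalent to distance}. Fix the basepoint $x_0$ to be some point in $E$ and recall from Lemma \ref{weight function equivalent to distance} that the distance function $\rho$ is comparable to $w(t) = 1 + f'(t)$, which in turn is comparable to $t$ for $t$ large by (\ref{f' grows like the function t}). So all four conditions can be rephrased in terms of the coordinate $t$.

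For condition (i), connectedness of annuli $B(x_0,s_1)\setminus \overline B(x_0,s_0)$: since $\rho$ is proper and comparable to $t$, an annulus corresponds roughly to a slab $\{a \le t \le b\} \subset \mathbb{R}\times S$ (plus the compact piece near the zero section if $s_0$ is small, but we only need $s_0 \ge C$), and such a slab is connected because $S$ is connected (it is an $S^1$-bundle over the connected base $\mathbb{P}(E)$). For conditions (ii) and (iii), the volume growth: from (\ref{metric g varphi in coordinates}) the Riemannian volume form of $g_\varphi$ is, up to a bounded factor, $f''(t)\, (f'(t))^{\,?}\,dt\wedge(\text{form on }S)$; more precisely the fibre factor $f'(t)\pi^*\hat g$ contributes $(f'(t))^{k}$ where $k$ is the number of \emph{positive} eigenvalues of $-\gamma$ (the zero-eigenvalue directions contribute a bounded factor from $\pi^*g_D$), while $f''(t)(dt^2 + (Jdt)^2)$ contributes $f''(t)$. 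Using $f''(t) \sim \text{const}$ and $f'(t)\sim t$ from Lemma \ref{growth of derivatives of f}, the volume of $\{t \le T\}$ grows like $\int^T t^{k}\,dt \sim T^{k+1}$, and since $T \sim s$ this gives $\operatorname{Vol}(B(x_0,s)) \le C s^{k+1}$, i.e. (ii) with $\beta = k+1$. For (iii) one runs essentially the volume computation already carried out in the proof of Lemma \ref{curvature of g varphi is bounded}: the ball $B(x,(1-C^{-1})\rho(x))$ contains a product-type region $[t(x)-c\,t(x), t(x)]\times(\text{ball in }S)$ whose $g_\varphi$-volume is bounded below by $\int_{c\,t(x)}^{t(x)} s^{k}\,ds \gtrsim t(x)^{k+1} \sim \rho(x)^{k+1}$, using that the fibre metric $g_{S,\tau}$ is monotone in $\tau$ and $g_M$-eigenvalue-controlled exactly as in that earlier proof.

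Condition (iv), the lower Ricci bound $\operatorname{Ric}_x \ge -C\rho(x)^{-2}$, is the easiest: by Theorem \ref{sign of ricci curvature Thm} the metrics $g_\varphi$ have \emph{non-negative} Ricci curvature everywhere, so (iv) holds trivially with any $C$. The main obstacle, such as it is, will be bookkeeping the volume computation carefully enough to see that it is precisely the \emph{non}-zero eigenvalues of $-\gamma$ that govern the exponent — i.e. that the zero-eigenvalue directions of $-\gamma$, whose metric coefficients stay bounded as $t\to\infty$, contribute only a bounded multiplicative constant and hence do not enter $\beta$; this is exactly the reduction that identifies $\beta = k+1$ with $k = (m+d-1) - \#\{\text{zero eigenvalues of }\gamma\}$, matching the degree of $Q$ as noted before Proposition \ref{Laplace is isomorphism}. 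For the lower volume bound (iii) in the presence of zero eigenvalues one should, just as in the proof of Lemma \ref{curvature of g varphi is bounded}, pass to Frobenius charts adapted to $\operatorname{Ker}\gamma$ and compare $g_{M,\tau}$ with the model product metric $(1+\tau)g_{\mathbb{C}^k} + g_{\mathbb{C}^{n-k}}$, then apply Fubini; all of this machinery is already available. Once (i)--(iv) are checked with $\beta = k+1$, the lemma is proved.
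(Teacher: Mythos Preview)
Your overall strategy matches the paper's, and your treatment of (ii) and (iv) is fine. There is, however, a genuine gap in your argument for condition (i), and your route for (iii) is unnecessarily circuitous.

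\textbf{Condition (i).} The assertion that ``an annulus corresponds roughly to a slab $\{a\le t\le b\}$'' because $\rho\sim t$ does \emph{not} establish connectedness of $B(x_0,s_1)\setminus\overline B(x_0,s_0)$. Comparability only gives an inclusion $A:=\{s_0<\rho<s_1\}\subset\{C^{-1}s_0\le t\le Cs_1\}$; it says nothing about the shape of $A$ inside that slab, and in particular does not rule out $A$ having several components (think of $s_1$ only slightly larger than $s_0$, where your ``slab'' picture gives no information). The paper's proof supplies the missing geometric input: choosing $x_0$ on the zero section, it shows that along each radial line in the complex fibre $L_x\cong\mathbb C$ the distance $\rho$ is \emph{monotone} in $t$. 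This is proved by analysing a minimising geodesic $q=(q_1,q_2)$ from $x_0$ and showing $q_1=t\circ q$ is increasing (otherwise one could shorten $q$ using that $f',f''$ are increasing). Monotonicity makes each $L_x\cap B(x_0,s)$ star-shaped about $0$, so $L_x\cap A$ is a genuine annulus in $\mathbb C$; hence $A$ is a fibre bundle over $\mathbb P(E)$ with connected fibres, and is connected. You will need this argument or something equivalent.

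\textbf{Condition (iii).} Importing the Frobenius-chart machinery from Lemma~\ref{curvature of g varphi is bounded} works, but it is more than you need here. The paper instead shows that $B(x,(1-C_0^{-1})\rho(x))$ contains an entire slab $\{t(x)+1\le t\le t(x)+C_0^{-1}\rho(x)-\sqrt{\rho(x)}\}$, by estimating $\operatorname{dist}_{g_\varphi}(x,y)\le C(t(y)-t(x))+C\sqrt{\rho(x)}$ via the comparison $g_\varphi\le C(dt^2+t\,g_S)$. Since the volume form is $f''Q(f')\,dt\wedge(\text{fixed form on }S)\sim t^k\,dt\wedge(\text{fixed})$, integrating over the full slab immediately gives $\gtrsim\rho(x)^{k+1}$, with no need to control balls in $S$ or to split into the $\operatorname{Ker}\gamma$ directions.
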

\begin{proof}
	We fix $x_0\in D \subset E$ to be a point on the zero-section of $E$. Thanks to Theorem \ref{sign of ricci curvature Thm},  Condition (iv) in Definition \ref{definition SOB} is clearly satisfied, so we focus on (i), (ii), (iii). 
	
	Beginning with the volume estimates (ii) and (iii),   we consider the volume form of $g_\varphi$, which is given by
	\begin{align}\label{volume form for growth behaviour}
	\frac{\omega_{\varphi} ^{m+d}}{(m+d)!} = \frac{\sqrt{-1}}{(m+d)!} f''Q(f') \partial t\wedge \partialb t \wedge \left( \pi^* \omega_D  - \pi ^* \gamma \right)^{m+d-1},
	\end{align}
	where the polynomial $Q$ is defined by (\ref{definition Q LB}) if $m=1$ or (\ref{definition Q  VB}) if $m\geq 2$. 
	Recall from above, that the degree of $Q$ is equal to $k$ as defined in Lemma \ref{lemma check SOB}. If we then choose
 $C\geq 1$ such that  (\ref{f' grows like the function t}) is satisfied, we obtain for large $t\geq C$:
	\begin{align}\label{in Lemma SOB: growth of Q in terms of t}
		C^{-1} t^k\leq f''(t)Q(f'(t))\leq Ct^k . 
	\end{align}
	Moreover, Lemma \ref{weight function equivalent to distance} implies that 
	\begin{align}\label{in Lemma SOB: t comparible to rho}
		C^{-1} t(x) \leq \rho(x) \leq Ct(x)
	\end{align}
	 if $\rho(x)\geq C$. In the estimates that follow, we increase $C>0$ if necessary but it still denotes a uniform constant which only depends on the geometry of $(E,g_\varphi)$ and the choice of base point $x_0$.
	
	For verifying (ii), let $s\geq C$ and observe that (\ref{in Lemma SOB: t comparible to rho}) implies
	\begin{align*}
		B(x_0,s)\subset B(x_0,C) \cup \left\{ y \in E \, | \,  0 \leq t(y) \leq Cs   \right\}.
	\end{align*}
	Integrating over these sets and using (\ref{volume form for growth behaviour}), we obtain
	\begin{align*}
		\operatorname{Vol}(B(x_0,s)) &\leq C + C \int _0 ^{Cs} \int_S f''(t) Q(f'(t)) dt \wedge \partialb t \wedge \left(\pi ^*(\omega_D -\gamma)   \right)^{m+d-1} \\
		&\leq C + C\int_0 ^{Cs} t^k dt\\
		&\leq C s^{k+1},
	\end{align*}
	where we used (\ref{in Lemma SOB: growth of Q in terms of t}) in the second line. This proves (ii) of Definition \ref{definition SOB} with $\beta=k+1$. 
	
	For showing (iii), the goal is to choose a new $C_0\geq 1$ such that for all $x\in E$ with $\rho(x)\gg 1$ sufficient large, we have an inclusion of the form
	\begin{align}\label{in lemma SOB: inclusion of volume for lower bound}
		B(x,(1-C_0^{-1})\rho(x)) \supset \left\{   t(y) \in \left[ t(x)+1, t(x) + C_0^{-1} \rho(x) - \sqrt{\rho(x)}\right]\right\}.
	\end{align}
	Indeed, if (\ref{in lemma SOB: inclusion of volume for lower bound}) holds, we can integrate and use (\ref{in Lemma SOB: growth of Q in terms of t}) to estimate
	\begin{align*}
		\operatorname{Vol}(B(x,(1-C_0^{-1}) \rho(x)   )) &\geq C_0^{-1} \int_{t(x)+1} ^{t(x)+ C_0^{-1}\rho(x) - \sqrt{\rho(x)}  } \sigma ^k d\sigma\\
		&\geq C_0^{-1} \rho(x)^{k+1},
	\end{align*}
	which is (iii) with $\beta=k+1$ as required. Hence it remains to check inclusion (\ref{in lemma SOB: inclusion of volume for lower bound}). To see that this is true, we again introduce the metric $g_S:= (Jdt)^2 +\pi^* \hat{g} + \pi^* g_D$ on the cross-section $S$ as in the proof of Lemma \ref{weight function equivalent to distance}, so that 
	\begin{align}\label{in Lemma SOB: definition of g_t}
		g_\varphi \leq C\left( dt^2 + t g_S  \right)=: g_t.
	\end{align}
	To estimate the distance function of $g_t$ from above, we proceed as in (\ref{in Lemma: equivalence of dist, upper bound on distance}). Given  $x,y \in E$ with $C\leq t(x)$ and $t(x) \leq t(y)$, we consider a path $q:[0,1] \to E$ from $q(0)=x$ to $q(1)=y$, which we write as $q=(q_1,q_2)$ under the identification $E\setminus D \cong \mathbb{R} \times S$. Furthermore, we assume that $q_1(\sigma)= \sigma(t(y)-t(x)) + t(x)$ is the linear path from $t(x)$ to $t(y)$, so that we estimate using (\ref{f' grows like the function t})
	\begin{align*}
	\operatorname{dist}_{g_t} (x,y)& \leq C \int_0^1\dot{q}_1(\sigma) d\sigma + C\int_0^1 \sqrt{q_1(\sigma)} \sqrt{g_S(\dot{q}_2(\sigma),\dot{q}_2(\sigma))} d\sigma \\
	&\leq C (t(y)-t(x)) + C \operatorname{diam}(S,g_S) \left(\sqrt{t(y)-t(x)} + \sqrt{t(x)} \right)
	\end{align*}
	 Together with (\ref{in Lemma SOB: t comparible to rho}) and (\ref{in Lemma SOB: definition of g_t}), this implies
	\begin{align}\label{in Lemma SOB: dist between x,y in terms of tx,ty }
		\operatorname{dist}_{g_\varphi} (x,y) \leq C \left(t(y)-t(x) +\sqrt{t(y)-t(x)}  \right) +  C\sqrt{\rho(x)}
	\end{align}
	from which we can deduce inclusion (\ref{in lemma SOB: inclusion of volume for lower bound}). Indeed, let $C>0$ satisfy (\ref{in Lemma SOB: dist between x,y in terms of tx,ty }) and define a new constant $C_0>0$ by $C_0^{-1}= C^{-1}(1-C^{-1})$. If we then assume that $\rho(x)\gg 1$ is large enough so that $C_0^{-1} \rho(x) -\sqrt{\rho(x)} >1$, we estimate for all $y\in E$ with $ t(x)+1\leq  t(y) \leq t(x) +C_0^{-1} \rho(x) - \sqrt{\rho(x)} $:
	\begin{align*}
		\operatorname{dist}_{g_\varphi} (x,y)&\leq C \left(t(y)-t(x)  \right) +  C\sqrt{\rho(x)} \\
		&\leq C C_0^{-1} \rho(x) -C\sqrt{\rho(x)} +C\sqrt{\rho(x)}\\
		&=\left(1-C^{-1}\right) \rho(x).
	\end{align*}
	Here we obtained the first inequality by applying $t(y)-t(x)\geq 1$ to  (\ref{in Lemma SOB: dist between x,y in terms of tx,ty }) and the second inequality makes use of the upper bound on $t(y)$. This shows inclusion (\ref{in lemma SOB: inclusion of volume for lower bound}) and thus (iii). 
	
	It remains to verify Condition (i). By compactness of $D$, we can choose $C>1$ such that for all $s\geq C$ the ball $B(x_0,s)$ contains a tubular neighborhood of the zero section. Given $x\in E\setminus D$, we denote the complex line thorough $x$ by $L_x\cong \mathbb{C}$. We need to understand the shape of the intersection of $L_x$ with the set $B_{(s_0,s_1)}(x_0):= B(x_0,s_1) \setminus \overline{B}(x_0,s_0)$ for all $s_1>s_0\geq C$. 
	
	First, we claim that for each $x\in B(x_0,s)$, the radial path $q_{\operatorname{rad}}$ in $L_x$ from $x$ to $0 \in L_x$ is entirely contained in the ball $B(x_0,s)$. Note that for this to be true it suffices to show that the function $\rho$ is increasing along $q_{\operatorname{rad}}$. In order to prove this, use the identification $E\setminus D \cong \mathbb{R} \times S$ and write $x=(a_1,b)$. Let $q:[0,1] \to E$ be a shortest geodesic from $q(0)=x_0$ to $q(1)= (a_1,b)$. On  $E\setminus D$, we decompose $q=(q_1,q_2)$ and let us assume for the moment that $ q_1(\sigma)$ is increasing in $\sigma \in [0,1]$. 
	 Given  $a_0<a_1$, we then choose a $\sigma_0 \in (0,1)$ with $q_1(\sigma_0)= a_0$ and reparameterize the path $q$ by declaring $q_{\sigma_0} (\sigma) := (q_1(\sigma_0  \sigma), q_2(\sigma))$, so that $q_{\sigma_0}$ is a path from $x_0$ to $(a_0,b)$. It follows from (\ref{metric g varphi in coordinates}) that  we have
	\begin{align*}
		g_\varphi\left( \dot{q}_{\sigma_0}(\sigma), \dot{q}_{\sigma_0}(\sigma) \right) \leq 		g_\varphi\left( \dot{q}(\sigma), \dot{q}(\sigma) \right)
	\end{align*}  
	for all $\sigma\in [0,1]$, since $f''$ and $f'$ are both increasing and we assumed that $q_1(\sigma_0 \sigma) \leq q_1(\sigma)$. 
	Then  we conclude $L(q_{\sigma_0}) \leq L(q)$ and thus $\rho(a_0,b)\leq \rho(a_1,b)$ for all $a_0<a_1$ and $b\in S$, as we claimed.
	
	 Hence, the claim holds if we show that $q_1$ is increasing.
	Recall that by definition, $q_1= t(q)$ and clearly $q_1$ increases if and only if $r^2(q)= e^{t(q)}$ does, where $r: E\to \mathbb{R}_{\geq 0}$ is defined at the beginning of Section \ref{section uniqueness}. Since $x_0 $ lies on the zero section of $E$, we have $r(q(0))=0$ and consequently there is a $\hat \sigma \in [0,1)$ such that 
	\begin{align*}
	  r(q(\sigma)) =0\; \text{ for all }  \sigma \in[0,\hat{\sigma}]\;\; \text{ and } \;\; \frac{\text{d}}{\text{d}\sigma} r^2(q( \sigma))>0 \;\; \text{ on }\; (\hat \sigma, \hat  \sigma + \varepsilon)
	\end{align*}
	for some small $\varepsilon>0$. 
	In particular,  $\lim_{\sigma \to {\hat \sigma} ^+} \dot q_1(\sigma) \geq 0$ and we only have to rule out the existence of two points $\sigma_1, \sigma_2\in [\hat \sigma,1]$ with $ \sigma_1<\sigma_2$ such that
	\begin{align}\label{definition sigma 1 sigma 2}
		q_1(\sigma_1) = q_2(\sigma_2) \;\;\; \text{ and } q_1(\sigma_1) <q_1(\sigma) \;\;\text{ for all } \sigma \in (\sigma_1,\sigma_2).
	\end{align} 
	However, if this was the case, then the path $q$ cannot be length-minimizing. Indeed, suppose that there are such numbers $\sigma_1,\sigma_2$ satisfying (\ref{definition sigma 1 sigma 2}). Then we define a new path $\tilde q$ from $x_0$ to $x$ by 
	\begin{align*}
		\tilde{q} (\sigma) =
		\begin{cases}
			q (\sigma) & \text{if } \sigma \in [0,1] \setminus (\sigma_1,\sigma_2) \\
			(q_1(\sigma_1), q_2(\sigma)) & \text{if } \sigma \in (\sigma_1,\sigma_2).
		\end{cases}
	\end{align*} 
	Using the decomposition  (\ref{metric g varphi in coordinates}) and the fact that $f''$ is increasing, we see that 
	\begin{align*}
		L(\tilde q) <L(q),
	\end{align*}
	 contradicting  the minimality of $q$. It follows that $q_1$ must be increasing.

	Now we can verify Condition (i), so consider any $s_1>s_0\geq C$. As shown in the previous paragraph,  both $L_x\cap B(x_0,s_j)$ with $j=1,2$ are star-shaped regions with center $0\in L_x$, so the complement $L_x\cap B_{(s_0,s_1)} (x_0)$ is diffeomorphic to a genuine open annulus in $\mathbb{C}$. From this, we deduce that $B_{(s_0,s_1)}(x_0)$ is a fibre bundle over $\mathbb{P}(E)$ with annuli in $\mathbb{C}$ as fibres. In particular, $B_{(s_0,s_1)}(x_0)$ is connected because $D$ is, finishing the proof. 
\end{proof}

Before proving Proposition \ref{Laplace is isomorphism}, we study  the spaces $C^{k,\alpha}_\delta (E)$ further. In fact,
  Lemma \ref{weight function equivalent to distance} allows us to obtain the expected embedding theorems and also Schauder estimates for $\Delta$.

 \begin{lemma}[Embeddings]\label{embeddings of holder spaces}
 	 Let $k,l \in \mathbb{N}$, $0<\alpha_0,\alpha_1<1$ and $\delta_0\leq \delta_1$. Then there are the following continuous embeddings:
 	 \begin{itemize}
 	 	\item[(i)] $C^{k} _{\delta_0} (E) \subset C^{l} _{\delta_1}(E)$ if $l\leq k$,
 	 	
 	 	\item[(ii)] $C^{k,\alpha_0} _{\delta_0}(E) \subset C^{l,\alpha_1} _{\delta_1} (E)  $ if $ l\leq k$ and $\alpha_1\leq \alpha_0$ ,
 	 	
 	 	\item[(iii)] $C^{k+1}_\delta(E) \subset C^{k,1} _\delta(E)$.
 	In particular, 
 	$
 	C^\infty _\delta (E) = \bigcap _{k\in \mathbb{N}_0} C^{k,\alpha}_\delta (E)
 	$. 
 	
 	 \end{itemize}
 \end{lemma}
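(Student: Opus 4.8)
The plan is to reduce all three inclusions, together with the final identity, to one elementary geometric fact about the weight $w$: it is \emph{slowly varying} on the scale of the injectivity radius $i_0$. Concretely, I would first establish that there is a uniform constant $c\ge 1$ such that, whenever $\operatorname{dist}(x,y)<i_0/2$ and $z$ lies on the unique minimal geodesic from $x$ to $y$,
\[
c^{-1}\min\bigl(w(x),w(y)\bigr)\ \le\ w(z)\ \le\ c\,\min\bigl(w(x),w(y)\bigr).
\]
This follows from Lemma~\ref{weight function equivalent to distance}: since $w\ge 1$ everywhere (indeed $w=1+f'$ with $f'=\tau\ge 0$) and $|\nabla w|\le C$, one has $|w(z)-w(x)|\le C\operatorname{dist}(x,z)\le Ci_0/2\le (Ci_0/2)\,w(x)$, so $w(z)\le c\,w(x)$; the matching lower bound $w(z)\ge c^{-1}w(x)$ then follows by treating the cases $w(x)\ge Ci_0$ and $w(x)<Ci_0$ separately and using $w(z)\ge 1$, and symmetrically for $y$. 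I would also record the trivial fact that $\operatorname{dist}(x,y)<i_0/2$ is uniformly bounded, so $\operatorname{dist}(x,y)^{\alpha_0}\le C\operatorname{dist}(x,y)^{\alpha_1}$ whenever $\alpha_1\le\alpha_0$, with $C$ depending only on $i_0,\alpha_0,\alpha_1$.

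Part (i) is then immediate: $w\ge 1$ and $\delta_0\le\delta_1$ give $w^{\,j-\delta_1}\le w^{\,j-\delta_0}$ pointwise for every $0\le j\le l\le k$, hence $\|u\|_{C^l_{\delta_1}}\le\|u\|_{C^k_{\delta_0}}$. For part (iii), given $u\in C^{k+1}_\delta(E)$ and $x,y$ with $\operatorname{dist}(x,y)<i_0/2$, I would integrate $\nabla_{\dot\gamma}\nabla^k u$ along the unit-speed minimal geodesic $\gamma$ from $x$ to $y$; after parallel transport this yields
\[
|\nabla^k u_x-\nabla^k u_y|\ \le\ \operatorname{dist}(x,y)\,\sup_{z\in\gamma}|\nabla^{k+1}u(z)|\ \le\ C\,\|u\|_{C^{k+1}_\delta}\,\operatorname{dist}(x,y)\,\sup_{z\in\gamma}w(z)^{\,\delta-k-1},
\]
using $|\nabla^{k+1}u|\le\|u\|_{C^{k+1}_\delta}\,w^{\,\delta-k-1}$. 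By the slowly-varying estimate, $\sup_{z\in\gamma}w(z)^{\,\delta-k-1}\le c^{\,|\delta-k-1|}\min(w(x),w(y))^{\,\delta-k-1}$ irrespective of the sign of $\delta-k-1$, and dividing by $\operatorname{dist}(x,y)$ bounds $[\nabla^k u]_{C^{0,1}_{\delta-k-1}}$; together with (i) this shows $u\in C^{k,1}_\delta(E)$.

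For part (ii) I would distinguish $l=k$ and $l<k$. If $l=k$, the $C^l_{\delta_1}$-part is (i), and for the seminorm one multiplies $\operatorname{dist}(x,y)^{\alpha_0}\le C\operatorname{dist}(x,y)^{\alpha_1}$ with the weight comparison $\min(w(x),w(y))^{-(\delta_1-l-\alpha_1)}\le\min(w(x),w(y))^{-(\delta_0-l-\alpha_0)}$, which holds since $\delta_1-\alpha_1\ge\delta_0-\alpha_0$ and $w\ge 1$; this bounds $[\nabla^l u]_{C^{0,\alpha_1}_{\delta_1-l-\alpha_1}}$ by $[\nabla^l u]_{C^{0,\alpha_0}_{\delta_0-l-\alpha_0}}$. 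If $l<k$, then $u\in C^{k,\alpha_0}_{\delta_0}\subset C^{l+1}_{\delta_0}$, so (iii) gives $u\in C^{l,1}_{\delta_0}$, and the $l=k$ argument applied with Hölder exponent $1\ge\alpha_1$ (the needed inequality $\delta_1-\alpha_1\ge\delta_0-1$ holds because $\delta_1\ge\delta_0$ and $\alpha_1<1$) yields $u\in C^{l,\alpha_1}_{\delta_1}$. Finally, for the displayed identity $C^\infty_\delta(E)=\bigcap_k C^{k,\alpha}_\delta(E)$: the inclusion ``$\subseteq$'' follows from $C^{k+1}_\delta\subset C^{k,1}_\delta\subset C^{k,\alpha}_\delta$ by (iii) and (ii), and ``$\supseteq$'' from $C^{k,\alpha}_\delta\subset C^k_\delta$, which is immediate from the definitions.

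The only step that requires any care is the slowly-varying estimate for $w$ along short geodesics, and inside it the lower bound $w(z)\ge c^{-1}\min(w(x),w(y))$, which is where the normalisation $w\ge 1$ and the gradient bound of Lemma~\ref{weight function equivalent to distance} enter; everything else is bookkeeping with the exponents $j,\delta,\alpha$. I therefore do not expect a genuine obstacle here.
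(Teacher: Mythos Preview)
Your proposal is correct. The paper itself does not give a proof of this lemma; it simply states that the argument is analogous to \cite{chaljub1979problemes}[Lemma~2] and omits it. Your write-up supplies exactly the standard details one expects in that reference: the key input is that $w\ge 1$ together with the gradient bound $|\nabla w|\le C$ from Lemma~\ref{weight function equivalent to distance}, which makes $w$ uniformly comparable to itself along geodesics of length at most $i_0/2$; parts (i)--(iii) and the final identity then follow by straightforward exponent bookkeeping, as you indicate. There is no genuine difference in approach to discuss.
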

The proof of this lemma is analogue to \cite{chaljub1979problemes}[Lemma 2], so we omit it here.

Now we are in a position to show Proposition \ref{Laplace is isomorphism}. 

\begin{proof}[Proof of Proposition \ref{Laplace is isomorphism}]
 	Part (i) is a direct consequence of  Theorem 1.5 in  \cite{hein2011weighted}. Indeed, $(E,g_\varphi)$ is SOB($k+1$) by Lemma \ref{lemma check SOB}, and, because of Lemma \ref{weight function equivalent to distance}, we have that $|h|=O(\rho^{-\delta})$ with $\delta>2$, where $\rho(x)$ denotes the distance to some fixed point $x_0$. Then  (i) is precisely \cite{hein2011weighted}[Theorem 1.5].

	For (ii), we note that the function $w$ satisfies the assumption of Theorem 1.6 in \cite{hein2011weighted}, see Lemma \ref{weight function equivalent to distance}. 
 Consequently, \cite{hein2011weighted}[Theorem 1.6] gives a $u \in C^{2,\alpha}(E)$ such that $\Delta u=h$ and $u=O(w^{2-\delta + \varepsilon})$ for all $\varepsilon >0$.  
 
 Then it only remains to verify the decay rate of $|\nabla u|$, which is a consequence of standard Schauder theory. Indeed,   since the curvature of $g_\varphi$ is bounded by Lemma \ref{curvature of g varphi is bounded},  we can find $s>0$ and $Q>0$ such that for all $x\in E$, there is a chart $\Phi_x$ from to the Euclidean ball $B_x(s)\subset \mathbb{R}^{m+d}$ of radius $s$ onto a neighborhood of $x$ so that $\frac{1}{Q}  g_{\text{euc}}\leq \Phi_x^* g_\varphi \leq Q g_{\text{euc}}$ and $|| \Phi_x ^* g_{\varphi} ||_{C^{1,\alpha} (B_x(s))} \leq Q$  (\cite{petersen1997convergence}[Theorem 4.1]).  Here, $g_{\text{euc}}$ denotes the flat metric and $||\cdot ||_{C^{1,\alpha}(B_x(s))}$ the Euclidean H\"older norm. For simplicity,  we suppress the chart $\Phi_x$ and view $B_x(s)$ as a subset of $E$. Also note that we can assume that $s$ is strictly smaller than  the injectivity radius of $(E,g_\varphi)$. 
 Applying the Euclidean Schauder estimates (\cite{gilbarg2015elliptic}[Theorem 6.2]) to the balls $B_x(s)$, we obtain that 
 \begin{align}\label{local schauder estimates for controll}
 	|\nabla u | _{g_\varphi} (x)&\leq Q |du|_{g_{\text{euc}}} (x) \nonumber\\
 	&\leq Q ||u||_{C^{2,\alpha} (B_x(s))} \\
 	&\leq QC_0 \left(  ||h||_{C^{0,\alpha} (B_x(s))} + ||u||_{C^0 (B_x(s))} \right) \nonumber
 \end{align}
 for  some uniform constant $C_0>0$ depending only on $\alpha$, $s$ and $Q$. Moreover, the weight function $w$ is chosen so that there is a uniform constant $C_1>0$ such that for all $x\in E$ with $t(x)\gg 1$ and all $y\in B_x(s)$, we have $\frac{1}{C_1} w(y)\leq w(x) \leq C_1 w(y)$. This follows directly from Lemma \ref{weight function equivalent to distance} and the fact that $g_\varphi$ and $g_{\text{euc}}$ are uniformly equivalent on $B_x(s)$. Therefore, we continue to estimate for all $x\in E$ with $t(x)\gg 1$  and all $y\in B_x(s)$:
 \begin{align*}
 	u(y) \leq C w(y)^{2-\delta + \varepsilon} \leq C C_1 ^{2-\delta+\varepsilon} w(x)^{2-\delta+\varepsilon},
 \end{align*}
i.e. $||u||_{C^0(B_x(s))} =O(w(x)^{2-\delta+\varepsilon})$. Similarly, we conclude that 
\begin{align*}
	||h||_{C^{0,\alpha}(B_x(s))} =O(w(x)^{-\delta})
\end{align*}
because $s$ is chosen  strictly  smaller than the injectivity radius and $h\in C^{0,\alpha}_{-\delta}(E)$. In combination with (\ref{local schauder estimates for controll}) we consequently arrive at
\begin{align*}
	|\nabla u |_{g_\varphi} = O(w^{2-\delta + \varepsilon}),
\end{align*}
as claimed. 
\end{proof}

The issue with (ii) of Proposition \ref{Laplace is isomorphism} is that one can in general not conclude  $u=O(w^{2-\delta})$  if $u= O(w^{2-\delta+\varepsilon})$ for all $\varepsilon>0$. For proving Theorem \ref{del del bar lemma}, however, we would like to  conclude that indeed $u=O(w^{2-\delta})$. The following proposition gives a criterion, when this conclusion is true.

\begin{lemma}\label{proposition improving the rate of a function}
	Let $\delta>0$ and suppose that $\xi \in C^{\infty}_{-1-\delta}(T^*E)$. If $\xi= du$ for some $u \in C^1(E)$, then there exists a constant function $u_c$ such that $u-u_c \in C^{\infty}_{-\delta}(E)$. If additionally $u \to 0$ as $t\to \infty$, then $u_c\equiv0$.  
\end{lemma}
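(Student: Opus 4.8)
The plan is to integrate $\xi$ towards infinity and to exhibit $u_c$ as the limiting value of $u$ on the cross-section at infinity. Since $\xi$ is smooth and $du=\xi$, the function $u$ is automatically smooth, and for every $j\ge 1$ one has $\nabla^j u=\nabla^{j-1}\xi$; because $\xi\in C^\infty_{-1-\delta}(T^*E)$, this yields $|\nabla^j(u-u_c)|=|\nabla^{j-1}\xi|=O(w^{-\delta-j})$ for an arbitrary constant $u_c$. Hence it suffices to produce a constant $u_c$ with $|u-u_c|=O(w^{-\delta})$; granting this, $u-u_c\in C^\infty_{-\delta}(E)$ directly from the definition of the weighted spaces.

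I work on $E\setminus D\cong\mathbb{R}\times S$ with $t$ the first coordinate. Along the line $s\mapsto(s,y)$ one has $u(a,y)-u(a',y)=\int_{a'}^{a}\xi(\partial_t)(s,y)\,ds$. By the decomposition (\ref{metric g varphi in coordinates}) the vector $\partial_t$ has $g_\varphi$-length $\sqrt{f''(t)}$, which is uniformly bounded by Lemma \ref{growth of derivatives of f}, so $|\xi(\partial_t)|(s,y)\le|\xi|_{g_\varphi}(s,y)\,\sqrt{f''(s)}\le C\,w(s)^{-1-\delta}$ with $C$ independent of $y\in S$. Since $w(s)$ is comparable to $s$ for large $s$ by (\ref{f' grows like the function t}), the integral $\int^{\infty}w(s)^{-1-\delta}\,ds$ converges; therefore $u_\infty(y):=\lim_{a\to\infty}u(a,y)$ exists and $|u(a,y)-u_\infty(y)|\le\int_a^\infty C\,w(s)^{-1-\delta}\,ds\le C'\,w(a)^{-\delta}$, uniformly in $y$.

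It remains to check that $u_\infty\colon S\to\mathbb{R}$ is constant. For a smooth curve $c\colon[0,1]\to S$, integrating $\xi=du$ along $\sigma\mapsto(a,c(\sigma))$ gives $u(a,c(1))-u(a,c(0))=\int_0^1\xi\bigl(\tfrac{d}{d\sigma}(a,c(\sigma))\bigr)\,d\sigma$. On the slice $\{a\}\times S$ the metric $g_\varphi$ is at most $\max(f''(a),f'(a),1)$ times the fixed compact metric $g_S:=(Jdt)^2+\pi^*\hat g+\pi^*g_D$ from the proof of Lemma \ref{weight function equivalent to distance}, and this maximum is $O(a)$ for large $a$. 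Hence the velocity has $g_\varphi$-norm $\le C\sqrt{a}\,|\dot c|_{g_S}$, so $|u(a,c(1))-u(a,c(0))|\le C\,w(a)^{-1-\delta}\sqrt{a}\,L_{g_S}(c)\le C''\,a^{-1/2-\delta}L_{g_S}(c)\to0$ as $a\to\infty$. Thus $u_\infty(c(0))=u_\infty(c(1))$, and since $S$ is connected (a circle bundle over the connected space $\mathbb{P}(E)$, cf.\ the proof of Lemma \ref{lemma check SOB}) we get $u_\infty\equiv u_c$ for a single constant $u_c$. Consequently $|u-u_c|=|u(a,y)-u_\infty(y)|\le C'\,w(a)^{-\delta}$ on $\{t\ge a_0\}$, while on the compact set $\{t\le a_0\}\supset D$ both $u-u_c$ and $w$ are bounded above and below, so $|u-u_c|=O(w^{-\delta})$ on all of $E$. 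Combined with the derivative estimates above, this gives $u-u_c\in C^\infty_{-\delta}(E)$. Finally, if $u\to0$ as $t\to\infty$ then $u_\infty(y)=0$ for every $y\in S$, whence $u_c=0$.

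The only step where the geometry of $g_\varphi$ really enters is the estimate $|u(a,c(1))-u(a,c(0))|\le C''a^{-1/2-\delta}L_{g_S}(c)$: the transverse directions of $g_\varphi$ grow only linearly in $t$ while $|\xi|=O(w^{-1-\delta})$, and this gap (which in fact holds for every $\delta>0$) forces the boundary value $u_\infty$ to be constant over $S$. Everything else is routine bookkeeping with the weighted norms and with the product structure $E\setminus D\cong\mathbb{R}\times S$.
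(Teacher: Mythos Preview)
Your proof is correct and follows essentially the same route as the paper's: reduce to a $C^0$ bound via $\nabla(u-u_c)=\xi$, integrate $\xi$ radially to obtain the limit $u_\infty(y)=\lim_{a\to\infty}u(a,y)$ with $|u-u_\infty|=O(w^{-\delta})$, and then show $u_\infty$ is constant by integrating $\xi$ along the slice $\{a\}\times S$ and sending $a\to\infty$. The only cosmetic difference is that the paper uses the identity $w'=f''$ to evaluate $\int f''\,w^{-1-\delta}\,dt=w^{-\delta}/\delta$ in closed form, whereas you bound $f''$ by a constant and invoke $w\sim t$; both yield the same decay.
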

\begin{proof}
	This lemma is proven analogously to the corresponding statement for  conical metrics \cite{marshallphd}[Lemma 5.10]. First observe that we only need to find a constant $u_c$ such that $u-u_c \in C^{0}_{-\delta}(E)$ because $\nabla (u- u_c) = du \in C^{\infty}  _{-1-\delta} (T^*E)$ by assumption.

	We work on $E\setminus D \cong \mathbb{R} \times S$ and fix a point $(t_0,y_0) \in \mathbb{R} \times S$. Viewing $S$ as the slice $\{ 0\}\times S$, we endow $S$ with a metric $g_S$ by restricting $g_\varphi$ to $S$. For a different point $(t,y)$, we let $q_{t_0,t}$ be the straight line path from $(t_0,y_0)$ to $(t,y_0)$ and $q_{y_0,y}$ be a path joining the points $
	(t,y_0)$ and $(t,y)$, so that its projection onto $S$ is a length minimizing geodesic. Then we have by Stoke's theorem
	\begin{align}\label{proof of lemma 5.4 using stokes}
	u(t,y)-u(t_0,y_0)= \int_{q_{t_0,t}} \xi + \int_{q_{y_0,y}} \xi.
	\end{align}
	As in the proof of \cite{marshallphd}[Lemma 5.10 (c)], the key is to notice that the integral of $ \xi$ along the path $q_{t_0,\infty}$ is finite, where $q_{t_0,\infty}$ is the linear path from $(t_0,y_0)$ to $(+\infty, y_0)$. Indeed, since $\xi  \in C^{\infty}_{-1-\delta}(T^*E)$ and $\delta>0$, we can estimate
	\begin{align}\label{first estimate of integral of xi}
	\left|\int_{q_{t_0,\infty}}  \xi\right| &\leq \int_{t_0}^{\infty} \left|\xi (\dot{q}_{t_0,\infty}) \right|ds \\ \nonumber
&\leq 	||\xi||_{C^0_{-1-\delta}} \int_{t_0}^{\infty} f'' w^{-1-\delta} ds \\
&\leq ||\xi||_{C^0_{-1-\delta}} \frac{w^{-\delta} (t_0)}{\delta}<+ \infty , \nonumber
	\end{align} 
	Splitting the integral $\int_{q_{t_0,\infty}} \xi$ into two parts, we can rewrite (\ref{proof of lemma 5.4 using stokes}) as follows:
	\begin{align}\label{RHS in Lemma}
	u(t,y) - u(t_0,y_0) - \int_{q_{t_0,\infty}} \xi = - \int _{q_t,\infty} \xi + \int_{q_{y_0,y}} \xi.
	\end{align}
	As in (\ref{first estimate of integral of xi}), it is easy to see that the right hand side of (\ref{RHS in Lemma}) is bounded by $w^{-\delta}(t)$. In fact, we have
	\begin{align} \label{estimate qy0 y}
	\left|  \int_{q_{y_0,y}} \xi \right| &\leq \int_{a}^{b} |\xi|_\varphi| \dot{q}_{y_0,y}|_{\varphi} ds\\ \nonumber
	 &\leq C ||\xi||_{C^0_{-1-\delta}} w^{-1-\delta}  (t) \sqrt{f'(t)}  \int_{a}^{b} |\dot{q}_{y_0,y}|_{g_S} ds\\
	&\leq C ||\xi||_{C^0_{-1-\delta}} w^{-\delta} (t) \operatorname{diam}(S,g_S),\nonumber
	\end{align}
	where $q_{y_0,y}$ is defined on the interval $[a,b]$ and $C>0$ is some constant independent of $t$. Combining with (\ref{first estimate of integral of xi}), we obtain
	\begin{align*}
\left| u(t,y) - u(t_0,y_0) - \int_{q_{t_0,\infty}} \xi \right| \leq ||\xi||_{C^0_{-1-\delta}} \left( \delta^{-1} + C \operatorname{diam}(S,g_S)   \right) w^{-\delta}(t),
	\end{align*}
	i.e. $u-u_c \in C^{0}_{-\delta}(E)$ where we set  
	\begin{align*}
	u_c&= u(t_0,y_0) + \int_{q_{t_0,\infty}} \xi \\
	& = u(t_0,y_0) + \lim_{t\to \infty} \left( u(t,y_0)- u(t_0,y_0)   \right)\\
	&=\lim_{t\to \infty} u(t,y_0).
	\end{align*}
	Thus, it remains to show that $u_c$ is indeed constant. Let $q_{y_0,y_1}$ be a path in the slice $\{t\} \times S$ connecting two points $(t,y_0) $ and $ (t,y_1)$. Then we have
	\begin{align}\label{limit independent of y}
	u(t,y_1)-u(t,y_0)= \int_{q_{y_0,y_1}} \xi, 
	\end{align}
	and by (\ref{estimate qy0 y}), the right hand side of (\ref{limit independent of y}) goes to $0$ as $t\to \infty$. Hence $\lim_{t\to \infty} u(t,y_0)= \lim_{t \to  \infty } u(t,y_1)$ for any $y_0,y_1 \in S$, proving the lemma. 
\end{proof}

\subsection{Vanishing of harmonic forms. } \label{section vanishing theorem}
We aim at proving a vanishing theorem for harmonic  (1,0)-forms on the manifold $(E,g_\varphi)$. This will be needed for the $\partial \partialb$-Lemma. We start with a basic observation which is immediate from the standard Bochner formula. 

\begin{lemma}\label{vanishing lemma using ric geq 0}
	Any harmonic 1-form $\beta$ on $(E,g_\varphi)$ such that $|\beta| \to 0$ as $t\to \infty$ must vanish identically. 
\end{lemma}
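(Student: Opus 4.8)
The plan is to combine the Bochner--Weitzenböck formula with the non-negativity of the Ricci curvature proved in Theorem \ref{sign of ricci curvature Thm} and a maximum principle ``at infinity''. First I would recall the Bochner identity for a harmonic $1$-form $\beta$: since $\Delta_{H}\beta=0$, the Weitzenböck formula $\Delta_{H}=\nabla^{*}\nabla+\Ric$ on $1$-forms gives $\nabla^{*}\nabla\beta=-\Ric(\beta)$, and therefore
\[
\tfrac12\,\Delta\,|\beta|^{2}=|\nabla\beta|^{2}+\Ric(\beta,\beta),
\]
where $\Delta=2\operatorname{tr}_{\omega_{\varphi}}\sqrt{-1}\,\partial\partialb$ is the Laplace--Beltrami operator on functions as in Section \ref{section calabis ansatz for line bundles}. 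Note that $u:=|\beta|^{2}$ is smooth on all of $E$, including at the zeros of $\beta$. By Theorem \ref{sign of ricci curvature Thm} the metric $g_{\varphi}$ satisfies $\Ric(\omega_{\varphi})\ge 0$, so the right-hand side above is non-negative and $u$ is a smooth, non-negative, \emph{subharmonic} function on the complete manifold $(E,g_{\varphi})$.

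Next I would run the maximum-principle argument using the hypothesis $|\beta|\to 0$ as $t\to\infty$. Since $D$ is compact and $t\to\infty$ is the only way to leave every compact subset of $E$, this hypothesis means precisely that $u\to 0$ at infinity on $(E,g_{\varphi})$. Fix $\varepsilon>0$. Then the superlevel set $\Omega_{\varepsilon}:=\{u>\varepsilon\}$ is relatively compact; if it were non-empty, it would be a precompact open domain with $u=\varepsilon$ on $\partial\Omega_{\varepsilon}$, and the maximum principle applied to the subharmonic function $u$ on $\Omega_{\varepsilon}$ would force $u\le\varepsilon$ there, a contradiction. Hence $\Omega_{\varepsilon}=\varnothing$, i.e. $u\le\varepsilon$ everywhere on $E$. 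Letting $\varepsilon\to 0$ gives $u\le 0$, and combined with $u\ge 0$ we conclude $u\equiv 0$, that is $\beta\equiv 0$.

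I do not expect a genuine obstacle: the two inputs (the Bochner formula and $\Ric\ge 0$ from Theorem \ref{sign of ricci curvature Thm}) are already available, and the only point needing a little care is the version of the maximum principle on the non-compact manifold, which is supplied by the decay of $|\beta|$ together with completeness of $g_{\varphi}$. Alternatively, one could integrate the Bochner identity against a sequence of cut-off functions in the style of Yau, using that $|\beta|$ is bounded and decays, to conclude first that $\nabla\beta\equiv 0$ and then that $\beta\equiv 0$; but the superlevel-set argument is cleaner given the assumed decay of $|\beta|$.
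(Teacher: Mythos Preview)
Your proof is correct and follows exactly the approach of the paper: the Bochner formula together with $\Ric\ge 0$ from Theorem \ref{sign of ricci curvature Thm} makes $|\beta|^2$ subharmonic, and then the maximum principle (which you spell out via the superlevel-set argument) forces $\beta\equiv 0$. The paper's proof is the same two-line argument, only more terse.
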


\begin{proof}
	Since $\operatorname{Ric}(\omega_{\varphi})$ is non-negative by Theorem \ref{sign of ricci curvature Thm}, the Bochner formula reads
	\begin{align*}
		\Delta |\beta |^2 \geq 0 ,
	\end{align*}
	and the claim then follows from the Maximum principle.
\end{proof}

It becomes more interesting if we replace the asymptotic condition of $\beta$ in the previous lemma by requiring that $\beta$ be square-integrable instead. If $\beta$ is moreover of type $(1,0)$, it is also holomorphic and it must be zero by the following Theorem.

\begin{theorem}\label{vanishing theorem}
	Any  $L^2$-holomorphic (1,0)-form on $(E,g_\varphi)$ is identically zero. 
\end{theorem}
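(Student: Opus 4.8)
The plan is to exploit the product-like structure of the metric near infinity together with the finiteness of the $L^2$-norm to force any holomorphic $(1,0)$-form to vanish. First I would decompose a holomorphic $(1,0)$-form $\alpha$ on $E\setminus D\cong\mathbb{R}\times S$ into its components relative to the fibration $\pi$. Using the identification from (\ref{diagram of tautological bundle}) and the explicit metric (\ref{metric g varphi in coordinates}), write $\alpha = a(t,y)\,\partial t + (\text{terms along }S)$, where the $S$-part splits further into the $S^1$-direction, the $\hat g$-direction on $\mathbb{P}(E)$, and the $g_D$-direction on $D$. The pointwise norm $|\alpha|^2_{g_\varphi}$ involves the weights $f''(t)$, $f'(t)$ and $1$ on these respective blocks. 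Since $f''(t)$ is bounded above and below (Lemma \ref{growth of derivatives of f}) while $f'(t)\sim C^{-1}t$, the $L^2$-condition $\int_E |\alpha|^2\,\omega_\varphi^{m+d}<\infty$, combined with the volume form (\ref{volume form for growth behaviour}) which grows like $t^k\,dt\wedge(\cdots)$, gives decay estimates on the fibre-averaged norms of the components of $\alpha$ as $t\to\infty$.

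Next I would use holomorphicity to turn these integral decay statements into pointwise ones. Because $\alpha$ is $L^2$-holomorphic and the curvature of $g_\varphi$ is bounded (Lemma \ref{curvature of g varphi is bounded}), a mean-value / elliptic estimate on balls of fixed radius (as in the proof of Proposition \ref{Laplace is isomorphism}(ii), using the charts from \cite{petersen1997convergence}) upgrades the $L^2$-decay of $\alpha$ to $|\alpha|(x)\to 0$ as $t(x)\to\infty$. A holomorphic $(1,0)$-form on a Kähler manifold is in particular harmonic (closed and co-closed, since $\bar\partial\alpha=0$ and $\partial^*\alpha$ vanishes by type considerations, $\alpha$ being holomorphic of bidegree $(1,0)$). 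Hence $\alpha$ is a harmonic $1$-form with $|\alpha|\to 0$ at infinity, and Lemma \ref{vanishing lemma using ric geq 0} — which rests on $\operatorname{Ric}(\omega_\varphi)\geq 0$ from Theorem \ref{sign of ricci curvature Thm} and the maximum principle — immediately yields $\alpha\equiv 0$ on $E\setminus D$, and then on all of $E$ by continuity.

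The main obstacle I expect is justifying that an $L^2$-holomorphic form actually decays pointwise at infinity rather than merely having decaying $L^2$-mass. This is where the boundedness of curvature and the uniform local geometry (SOB($k+1$), Lemma \ref{lemma check SOB}) are essential: they provide uniform-size harmonic-coordinate charts on which $\alpha$ satisfies a uniformly elliptic equation, so that $\sup_{B_x(s/2)}|\alpha|\leq C\,\|\alpha\|_{L^2(B_x(s))}$, and the right-hand side tends to $0$ as $t(x)\to\infty$. A minor point to handle carefully is the behaviour near the zero section $D$, where the coordinate $t$ degenerates; but there $\alpha$ is just a holomorphic form on a neighbourhood of the compact submanifold $D$, so no issue arises, and the whole argument really only needs the asymptotic region $\{t\gg 1\}$. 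Once the pointwise decay is in hand, the Bochner argument of Lemma \ref{vanishing lemma using ric geq 0} closes the proof.
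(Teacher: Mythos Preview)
Your approach is correct but genuinely different from the paper's. You argue: a holomorphic $(1,0)$-form is $\Delta_d$-harmonic (one small slip: it is $\bar\partial^*\alpha$, not $\partial^*\alpha$, that vanishes by type; together with $\bar\partial\alpha=0$ this gives $\Delta_{\bar\partial}\alpha=0$, hence $\Delta_d\alpha=0$ on a K\"ahler manifold); then bounded geometry (Lemma~\ref{curvature of g varphi is bounded}) plus the Li--Schoen mean value inequality for the subharmonic function $|\alpha|^2$ (Bochner with $\operatorname{Ric}\geq 0$) turn the finite $L^2$-mass into $|\alpha|\to 0$ at infinity, and Lemma~\ref{vanishing lemma using ric geq 0} finishes. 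The component decomposition in your first paragraph is not actually needed. The paper instead exploits the bundle structure: it writes $\beta=\pi^*j^*\beta+\partial h$ via the retraction onto $D$, uses the soliton vector field to show $\iota_X(\partial h)$ is an $L^2$-holomorphic function and hence zero, integrates by parts on tubes around the zero section to kill $\partial h$, and then observes that a nonzero form pulled back from $D$ cannot be $L^2$ since $|\pi^*\alpha|^2\gtrsim w^{-1}$ is not integrable. The paper's argument uses the fibration and the vector field $X$ but not $\operatorname{Ric}\geq 0$; yours uses $\operatorname{Ric}\geq 0$ and the uniform local geometry but nothing specific to the bundle, so it transports to any complete K\"ahler manifold with non-negative Ricci curvature and a uniform lower bound on the volume of unit balls.
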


\begin{proof}
	We adapt the idea behind \cite{munteanu2015kahler}[Theorem 7]. Let $\beta$ be a holomorphic (1,0)-form, which is square integrable w.r.to the metric $g_\varphi$. Then $\partialb \beta = \partialb^* \beta =0$, and by the K\"ahler identities $\Delta_d \beta =0$, i.e. $\beta$ is harmonic. Since every  $L^2$-harmonic form on a complete manifold is closed and coclosed, we conclude $d\beta =d^* \beta=0$. Observe that $\beta$ and $\pi^*  j^* \beta$ are in the same de-Rham cohomology class, where $\pi : E \to D$ is the projection and  $j: D \to E$ is the inclusion of $D$ as the zero section. Hence $\beta= \pi ^* j^* \beta + \partial h$ for some function $h$. It follows immediately that $\partialb \partial h=0$. 
	
	For some $\varepsilon>0$, consider  the tube $D_\varepsilon=\{ z\in E \;|\; r(z)\leq \varepsilon  \}$ around the zero section. Then by Stoke's theorem, there is the following formula
	\begin{align}\label{vanishing theorem integration formula}
	\int _{D_\varepsilon} |\partial h|^2 = - \int _{D_\varepsilon} \langle  h,\partial ^*\partial  h  \rangle + \int_{\partial D_\varepsilon} h \iota _{\nu} (\partial h ).  
	\end{align}
	Here, $\nu:= \frac{X}{|X|}$ denotes the outward pointing unit normal vector to $\partial D_\varepsilon$. As $X$ is a real holomorphic vector field, the function $\iota_X (\partial h)$ is also holomorphic and we claim that it is  in $L^2$. Indeed, using $\iota_X (\pi^* j^* \beta)=0$, we observe that 
	\begin{align*}
		|\iota_X (\partial h)| = |\iota_X (\beta)| \leq |X|\cdot |\beta|
	\end{align*}
	so that $\iota_X(\partial h)$ is square-integrable since $X$ is bounded and $\beta $ is  $L^2$. Hence, $\iota_X (\partial h)$ is an $L^2$-holomorphic function and must consequently be zero.
	 Moreover, $2 \partial^* \partial h= \Delta h =0$ because $h$ is pluriharmonic. Thus,  $\partial h$ vanishes identically on $D_\varepsilon$ by (\ref{vanishing theorem integration formula}). So $\partial h$ must be zero everywhere since it is a holomorphic (1,0)-form.
	
	 We conclude that $\beta = \pi ^* j^* \beta$.
	However, a form pulled back from the base can never be in $L^2$, unless it vanishes identically. Indeed, let $\alpha$ be a 1-form on $D$ which is non-zero at some point $p$.  Keeping the expression (\ref{metric g varphi in coordinates}) in mind,  we can always estimate in a neighborhood around $p$
	\begin{align*}
		\langle \pi ^* \alpha , \pi ^*\alpha \rangle \geq C w^{-1} >0
	\end{align*}
	for some  $C>0$ independent of $t$. It follows that $\int _E |\pi^* \alpha|^2 = + \infty$ since $w^{-1}$ is not integrable. This finishes the proof. 
\end{proof}

\subsection{The $\partial \bar \partial$-Lemma.} \label{section del del bar lemma} In this paragraph, we prove Theorem \ref{del del bar lemma}  on the manifold $E$ analogue to \cite{HAJOricciflat}[Theorem 3.11].

The first step is to find a primitive of $\eta$, with controlled growth. In fact, one can write down an explicit primitive for $\eta$ on the product $E\setminus D\cong \mathbb{R} \times S$ and then read off its growth behaviour. This is the idea behind the next proposition.

\begin{prop}\label{prop primitive with growth }
	Let $\delta >2$ and $\eta\in C^{\infty}_{-\delta}(\Lambda^2 T^*E)$ be a $d$-exact $2$-form. Then $\eta= d\theta$ for some $\theta \in C^{\infty}_{-\delta +1}(T^* E)$. 
\end{prop}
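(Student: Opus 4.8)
The plan is to construct an explicit primitive on the product $E\setminus D\cong\mathbb{R}\times S$ by integrating $\eta$ in the $\mathbb{R}$-direction, check its decay, and then patch across the zero section. Write $\iota_{\partial_t}$ for contraction with $\partial/\partial t$ and decompose $\eta=dt\wedge\alpha_t+\beta_t$ on $\mathbb{R}\times S$, where for each fixed $t$ the forms $\alpha_t=\iota_{\partial_t}\eta$ and $\beta_t$ are pulled back from $S$. Since $\eta$ is closed, the Cartan formula $\mathcal{L}_{\partial_t}\eta=d\iota_{\partial_t}\eta+\iota_{\partial_t}d\eta=d(\iota_{\partial_t}\eta)$ gives $\partial_t\beta_t=d_S\alpha_t$ and $\partial_t\alpha_t$ is $d_S$-exact-free of $dt$-terms; in particular the natural candidate primitive is
\begin{align*}
\theta:=\int_{+\infty}^{t}\alpha_s\,ds,
\end{align*}
the fibrewise integral from $+\infty$, which converges because $\eta\in C^\infty_{-\delta}$ with $\delta>2$ forces $|\alpha_s|_{g_\varphi}$ to decay like $w(s)^{-\delta}$ and, using $|dt|_{g_\varphi}=f''(t)^{-1/2}$ together with $f''$ bounded and $w(s)\sim s$ from Lemma \ref{growth of derivatives of f}, the integrand is integrable in $s$. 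One then checks $d\theta=dt\wedge\alpha_t+\int_{+\infty}^t d_S\alpha_s\,ds=dt\wedge\alpha_t+\int_{+\infty}^t\partial_s\beta_s\,ds=dt\wedge\alpha_t+\beta_t-\lim_{s\to+\infty}\beta_s=\eta$, where the boundary term at $+\infty$ vanishes by the decay of $\beta_s$.

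Next I would estimate the growth of $\theta$. Componentwise, the same computation as in (\ref{first estimate of integral of xi}) gives, schematically,
\begin{align*}
|\theta|_{g_\varphi}(t)\leq \int_{t}^{+\infty}\|\eta\|_{C^0_{-\delta}}\,f''(s)^{1/2}\,w(s)^{-\delta}\,ds
\leq C\,\|\eta\|_{C^0_{-\delta}}\,w(t)^{-\delta+1},
\end{align*}
using $f''$ bounded above and below and $w(s)\sim s$; the derivative bounds $|\nabla^j\theta|=O(w^{-\delta+1-j})$ follow either by differentiating under the integral sign (the $\mathbb{R}$-derivative recovers $\alpha_t$, which is already $O(w^{-\delta})$, and tangential derivatives hit $\alpha_s$ inside the integral) or, more cleanly, by the same interior Schauder argument used in the proof of Proposition \ref{Laplace is isomorphism}, since $g_\varphi$ has bounded geometry by Lemma \ref{curvature of g varphi is bounded}. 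This shows $\theta\in C^\infty_{-\delta+1}(T^*E)$ away from the zero section.

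The main obstacle is the behaviour near the zero section $D$, where the coordinate $t\to-\infty$ and the splitting $E\setminus D\cong\mathbb{R}\times S$ degenerates: the primitive $\theta$ defined by integrating from $+\infty$ need not extend smoothly across $D$. To handle this I would argue as in \cite{HAJOricciflat}[Theorem 3.11]: $\eta$ is a globally defined smooth closed $2$-form on $E$, and $E$ deformation retracts onto the compact manifold $D$, so by the ordinary Poincaré lemma with parameters (a fibrewise integration using the retraction, cf. the standard homotopy operator) there is a genuine smooth primitive $\theta_0$ on all of $E$, with $\eta=d\theta_0$; moreover on a fixed tubular neighbourhood of $D$ one gets $\|\theta_0\|_{C^k}\leq C_k\|\eta\|_{C^k}$. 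On the overlap region $\{t\geq T_0\}$ the closed form $\theta-\theta_0$ is exact, $\theta-\theta_0=dg$ for some function $g$, and by Lemma \ref{proposition improving the rate of a function} (applied with the appropriate $\delta$, after noting $\theta-\theta_0\in C^\infty_{-\delta+1}\subset C^\infty_{-1-(\delta-2)}$ since $\delta>2$) one can choose $g$ with $g-g_c\in C^\infty_{-\delta+2}$ for a constant $g_c$. Cutting off with a function $\chi$ supported in $\{t\geq T_0\}$ and equal to $1$ for $t$ large, set $\theta':=\theta-d\big(\chi(g-g_c)\big)$; then $d\theta'=\eta$, $\theta'$ agrees with $\theta_0$ plus a $C^\infty_{-\delta+1}$-correction near $D$ hence is smooth on all of $E$, and $\theta'$ retains the decay $O(w^{-\delta+1})$ at infinity because $\chi(g-g_c)\in C^\infty_{-\delta+2}$ so its differential lies in $C^\infty_{-\delta+1}$. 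This $\theta'$ is the desired primitive.
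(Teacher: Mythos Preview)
The integration-from-infinity construction on $\mathbb{R}\times S$ and the verification that $d\theta=\eta$ there match the paper's approach. The genuine gaps are all in the patching near the zero section.

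First, your cutoff is oriented the wrong way. With $\chi$ supported in $\{t\geq T_0\}$ and $\chi\equiv 1$ for large $t$, the formula $\theta'=\theta-d\bigl(\chi(g-g_c)\bigr)$ gives $\theta'=\theta$ on $\{t<T_0\}$, i.e.\ precisely near $D$, where $\theta$ may fail to extend smoothly---so the obstacle you identified is not actually resolved. (Reversing to $\theta'=\theta_0+d\bigl(\chi(g-g_c)\bigr)$ would give the intended limits, but still does not fix the next two issues.)

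Second, the claim that the closed $1$-form $\theta-\theta_0$ is exact on $\{t\geq T_0\}$ is unjustified: this region retracts onto the $S^1$-bundle $S$ over $\mathbb P(E)$, whose $H^1$ is typically nonzero. Third, the assertion $\theta-\theta_0\in C^\infty_{-\delta+1}$, which you need in order to apply Lemma~\ref{proposition improving the rate of a function}, fails because $\theta_0$ is produced by the fibrewise retraction and carries no decay whatsoever at infinity.

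The paper avoids all three problems by reversing the order of operations: it starts from the \emph{given} global primitive $\xi$ (which exists since $\eta$ is assumed $d$-exact on $E$), cuts it off to $\hat\xi=\chi\xi$ vanishing near $D$, and then applies the integration-from-infinity operator to $\hat\eta:=d\hat\xi$ rather than to $\eta$ itself. Because $\hat\eta\equiv 0$ near $D$, the resulting $\hat\theta$ is independent of $t$ for $t\leq t_1$ and extends across $D$; then $\theta:=\xi-\hat\xi+\hat\theta$ is the desired global primitive, equal to $\xi$ near $D$ and to $\hat\theta$ at infinity. No cohomological obstruction and no decay of an auxiliary primitive $\theta_0$ is ever needed.
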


 \begin{proof}
 	As in \cite{HAJOricciflat}[Theorem 3.11], we first reduce the problem to finding a primitive for $\eta $ on the product $\mathbb{R} \times S$. By assumption, there exists a $1$-form $\xi$ such that $\eta= d\xi$. Let $t_1<t_2$ and define two compact sets $K_j$ with $j=1,2$ by
 	\begin{align*}
 	K_j= \{ z\in E \:|\: t(z)\leq t_j  \},
 	\end{align*}
 	where we view the zero section of $E$ to be the set $\{z\in E \: |\: t(z)=-\infty \}$.
We pick a cut-off function $\chi$ so that $\chi \equiv 0$ on $K_1$ and $\chi\equiv 1$ on the complement of $K_2$. Then we put $\hat{\xi}:= \chi \xi$ and $\hat{\eta}:= d \hat{\xi}$. Note that if $\hat{\eta} = d\hat{\theta}$ for some $\hat{\theta}$, then $\theta:=\xi - \hat{\xi } + \hat{\theta}$ satisfies 
\begin{align*}
d\theta = d\xi - d( \chi \xi) + \hat{\eta} = \eta.
\end{align*}
Since $\theta=\hat{\theta}$ outside $K_2$, it suffices to find $\hat{\theta}\in C^{\infty}_{1-\delta}(\Lambda^* E)$ with $\hat{\eta}= d\hat{\theta}$ and $\hat{\theta}\equiv 0$ on $K_1$. 
The following construction of $\hat\theta$ can be found in the proof of \cite{marshallphd}[Proposition 5.8]. 

For each $t\in \mathbb{R}$, there is an inclusion $i_{t} : \{t \}\times S \to \mathbb{R}\times S$ given by $i_t (y)=(t,y)$. Write $\hat{\eta}= dt\wedge \hat{\eta}_1 + \hat \eta _ 2$, where $\hat \eta _j $ are 1-parameter families of $j$-forms such that 
\begin{align}\label{vanishing of eta j}
\iota_{\partialt}  \hat{\eta} _j =0 \;\;\; \text{and}  \;\;\; i^*_t \hat{\eta}_j=0 \;\;\; \text{for all } t\leq t_1.
\end{align}
We define a family $\hat{\theta}_t$ with $t\in \mathbb{R}$  of 1 forms on $S$ by
\begin{align} \label{theta hat definition}
\hat{\theta}_t = -\int_{t}^{\infty} i_s^*(\hat{\eta}_1)ds.
\end{align}
Then we  define a 1-form $\hat{\theta}$ on $\mathbb{R} \times S$ by requiring that
\begin{align} \label{definition of theta from theta t}
\iota_{\partialt} \hat{\theta} =0 \;\;\; \text{and }\;\; i^*_t \hat{\theta} = \hat{\theta}_t \;\;\; \text{for all } t\in \mathbb{R}.
\end{align}
We have to prove that $\hat \theta$ is well-defined, i.e. that the integral (\ref{theta hat definition}) exists. We start by looking at $|\hat{\eta}_1|$. As  $dt$ and $\hat{\eta}_1$ are orthogonal to each other, we have that 
\begin{align*}
|dt \wedge \hat{\eta} _1 | = |dt| |\hat{\eta}_1|  = \frac{1}{\sqrt{f''(t)}} |\hat{\eta}_1| .
\end{align*}
Using  that $dt\wedge \hat{\eta} _1$ is orthogonal to $\hat{\eta}_2$, we can estimate
\begin{align}\label{estimate of eta hat 1}
|\hat{\eta} _1 | = \sqrt{f''(t)}|dt \wedge \hat{\eta}_1| \leq \sqrt{f''(t)} |dt\wedge \hat{\eta}_1 + \hat{\eta}_2| = O(w^{-\delta}),
\end{align}
since $f''$ is bounded and $|\hat{\eta}|=O(w^{-\delta})$ by assumption. To compute the integral (\ref{theta hat definition}), we  work in coordinates. Let  $(y_0=t,y_1,\dots , y_{2(m+d)-1})$ be real coordinates of $\mathbb{R}\times S$ and write $\hat{\eta }_1 = \sum _{j\geq 1} \hat \eta _{1,j} dy_j$. Then (\ref{theta hat definition}) becomes
\begin{align}\label{theta integral in coordinates}
\hat \theta _t = - \sum _{j \geq 1} \left(\int _t ^{\infty} i^*_s \hat \eta _{1,j} ds \right) dy_j .
\end{align}
Note that the norms $|dy_j|$ may not have the same asymptotic behaviour for different values of $j= 1,\dots ,m+d-1$. In fact, it follows from (\ref{metric g varphi in coordinates}) that we have 
\begin{align}
  |dy_j|= \begin{cases*}
O(w^{-\frac{1}{2}}) & if $\pi^*\hat{g}_{jj}>0$,\\
O(1) & if $\pi ^*\hat{g}_{jj}=0$, \\
\end{cases*}
	& \text{and }\; \frac{1}{|dy_j|}= 
	\begin{cases*}
	O(w^{\frac{1}{2}}) & if $\pi ^* \hat{g}_{jj}>0$,\\
	O(1) & if $\pi ^* \hat{g}_{jj}=0$. \\
	\end{cases*}
\end{align}
As $|\hat{\eta}|=O(w^{-\delta})$, we conclude that either $|\hat \eta _{1,j}|=O(w^{-\delta +\frac{1}{2}})$  or $|\hat \eta _{1,j}|=O(w^{-\delta})$ and hence, the integrals in (\ref{theta integral in coordinates}) are all finite because we chose $-\delta+1<-1$. 

We also observe from (\ref{vanishing of eta j}) that  $\hat{\theta} _t =\hat{\theta}_s $ for all  $s,t\leq t_1$, so $\hat{\theta}$ extends to a smooth 1-form on $E$. 
Moreover, we can read
 off (\ref{theta integral in coordinates})  that $| \hat\theta|  =O(w^{-\delta +1})$, i.e. $\hat\theta \in C^{0}_{-\delta +1}(T^*E)$. 
It is possible to obtain estimates on derivatives of $\hat{\theta}$ and to show that $\hat{\theta} \in  C^{\infty}_{-\delta +1}(T^*E)$. However, this is a long calculation which relies only on two main observations. First, we deduce from
Lemma \ref{growth of derivatives of f} that $|\nabla^l dy_j|$ behaves asymptotically like $|dy_j| w^{-l}$ for all $l\geq 0$ and $j =0,\ldots , 2(m+d)-1$. Secondly, we can conclude from $\eta \in C^{\infty}_{-\delta } (\Lambda^*T^*E)$ that also $|\nabla^l \hat{\eta }_1|=O(w^{-\delta -l})$. Using formula (\ref{theta integral in coordinates}), it is then straight forward to verify $|\nabla^l\hat{\theta}| =O(w^{-\delta -l +1})$, as claimed.  We leave the details to the reader, but remark that the required estimate is similar to bounding $|\hat{\theta}|$.

It remains to show  that $\hat{\eta}= d\hat{\theta} $ by considering its components. In fact, it is an easy calculation (\cite{marshallphd}[p.80]) to prove that 
\begin{align*}
 \partialt \left(i^*_t (\hat{\eta} - d\hat{\theta})  \right)=0,
\end{align*}
i.e. $i_s^*( \hat{\eta} - d \hat{\theta} ) =i_t ^* (\hat{\eta} - d \hat{\theta} )$ for all $s,t \in \mathbb{R}$. Since $\hat{\eta}, \hat{\theta} \to 0$ as $t\to \infty$, we conclude that $i_t^*(\hat{\eta} - d \hat{\theta}) =0$ for any $t\in \mathbb R$. Moreover, it is shown in \cite{marshallphd}[p.80] that 
\begin{align*}
\iota_{\partialt} \hat{\eta} = \iota_{\partialt} d \hat{\theta},
\end{align*}
and hence  $\hat{\eta} = d\hat{\theta}$ as we claimed.

 \end{proof}

\begin{proof}[Proof of Theorem \ref{del del bar lemma}]
	The strategy is the same as for the proof of \cite{HAJOricciflat}[Theorem 3.11].
	We start with some basic observations. By Proposition \ref{prop primitive with growth },  there exists a $\theta \in C^\infty _{1-\delta }(\Lambda^* E)$ such that $d\theta= \eta$. Since $\eta$ is real, $\theta$ will also be a real form, i.e. $\theta^{1,0}= \overline{\theta^{0,1}}$ if $\theta= \theta^{1,0}+ \theta^{0,1}$ is the decomposition into types.
	Moreover,  $\eta$ is of type (1,1), so we must have that $\partial \theta ^{1,0}= \partialb \theta ^{0,1}=0$.
	
	If $\partial^*$ denotes the formal dual of $\partial$ (w.r.to the $L^2$-metric induced by $g_\varphi$), then $\partial ^* \theta ^{1,0} \in C^{\infty}_{-\delta }(E)$. We would like to find a solution $u$ to the equation $\Delta u = \partial ^* \theta^{1,0}$, whose growth we can control. There are two cases to consider, corresponding to part ($i$) and $(ii)$ of Proposition \ref{Laplace is isomorphism}.
	
	First, we consider the case where the degree $k$ of the polynomial $Q$  is greater or equal to $2$. By ($ii$) of Proposition \ref{Laplace is isomorphism}, there exists $u \in C^{2,\alpha}(E)$ such that $\Delta u= \partial^* \theta^{1,0}$ and $|u|+ |\nabla u |=O(w^{2-\delta +\varepsilon})$. It follows that $\partial ^*\left(\partial u - \theta^{1,0}\right) = \partial \left( \partial u - \theta^{1,0}\right) =0$, and hence the 1-form $\partial u - \theta^{1,0}$ is harmonic by the K\"ahler identities.

	 Choosing $\varepsilon>0$ small enough, we can assume that $2-\delta + \varepsilon<0$ and hence we see from $|\nabla u|=O(w^{2-\delta + \varepsilon})$  and $\theta \in C^{\infty}_{1-\delta}(\Lambda^* E)$ that
	 \begin{align*}
	|\partial u- \theta^{1,0}| \leq |d u | + |\theta| \to 0,  
	\end{align*}
	as $t \to \infty$. Then Lemma \ref{vanishing lemma using ric geq 0}  implies $\partial u - \theta^{1,0}= 0$ and consequently,
	\begin{align*}
	\eta = d\theta = \partial \theta ^{0,1} + \partialb \theta ^{1,0}= \partial \partialb  \bar u + \partialb \partial u = -2 \sqrt{-1} \partial \partialb \operatorname{Im} u ,
	\end{align*}
	where $\operatorname{Im}u$ is the imaginary part of $u$. 
	
	It remains to show that $\operatorname{Im} u \in C^{\infty} _{2-\delta}(E)$ as opposed to only $\operatorname{Im}u \in C^{2,\alpha}(E)$ and $\operatorname{Im}u =O(w^{2-\delta +\varepsilon})$. As we can choose $\varepsilon>0$ so that $2-\delta+ \varepsilon<0$, this improvement of the decay rate, however, follows immediately from Proposition \ref{proposition improving the rate of a function} if we can show $d \operatorname{Im} u \in C^{\infty}_{1-\delta}(\Lambda^*E)$. This last condition is clearly satisfied  since $\theta^{1,0}, \theta^{0,1} \in C^{\infty}_{1-\delta}(\Lambda^*E)$ and $\theta^{1,0}- \theta ^{0,1}=\partial u - \overline{\partial u}= d\operatorname{Re} u + \sqrt{-1}d \operatorname{Im}u$. This settles the first case.
	
	For the second case, assume that $k\leq 1$. We want to use (i) of Proposition \ref{Laplace is isomorphism} to solve $\Delta u = \partial ^* \theta^{1,0}$. This time, however, we only know that the solution $u$ satisfies $\int |\nabla u|^2 \omega_{\varphi}^{m+d} <+ \infty$, and not necessarily that $u$ decays towards infinity. So the idea is to use the vanishing Theorem \ref{vanishing theorem} instead.
	
	Before applying Proposition \ref{Laplace is isomorphism} (i), we need to verify that $\int \partial ^* \theta^{1,0} \omega_{\varphi}^{m+d} $ is zero. For any $t_0\in \mathbb{R}$, define
	$K_{t_0}=\{ z\in E \: |\: t(z)\leq t_0  \}$ and consider the integral
	\begin{align}\label{integral of partial star theta is zero}
	\int_{K_{t_0} }\partial ^* \theta^{1,0} \omega_{\varphi}^{m+d} = \int_{K_{t_0}} d * \theta ^{1,0} = \int_{\{t_0\}\times S} * \theta ^{1,0},
	\end{align}
	where we used Stoke's for the last equality. If we equip the slice $\{t_0\} \times S$ with  the restriction of $g_\varphi$ and denote the corresponding volume by $\operatorname{Vol}(\{t_0\}\times S)$, then we can estimate
	\begin{align*}
	\left| \int_{\{t_0\} \times S} * \theta ^{1,0} \right| \leq  \operatorname{Vol}(\{t_0\}\times S) \sup _{\{t_0\} \times S} |\theta| = O(w^{k+1-\delta} (t_0)),
	\end{align*}
	since $|\theta|= O(w^{1-\delta})$ and $\operatorname{Vol}(\{t_0\}\times S)=O(w^{k})$. It follows that the right hand side of (\ref{integral of partial star theta is zero}) goes to zero as $t_0 \to +\infty$, as we assumed $k\leq 1$ and $\delta>2$. Hence $\int \partial ^* \theta ^{1,0} \omega_{\varphi}^{m+d} =0$, as claimed. 
	
	So we find a $u\in C^{2,\alpha}(E)$ such that $\Delta u = \partial ^* \theta^{1,0}$ and $\int |\nabla u|^2 \omega_{\varphi}^{m+d} $ is finite. In particular, the 1-form $\beta= \theta^{1,0} - \partial u $ is harmonic. Also note that  
	\begin{align*}
	|\theta|\omega^{m+d}_\varphi = O(w^{2-2\delta+ k } ) 
	\end{align*}
	with $2-2\delta+k<-1$, so that $\theta$ is in $L^2 $, and thus $\beta$ is $L^2 $ as well. 
	
	It follows that $d\beta = d^* \beta =0$, and in particular, $\beta$ is an $L^2$-holomorphic (1,0)-form. Hence it must be identically zero by Theorem \ref{vanishing theorem}, i.e. $\theta^{1,0} = \partial u$. The rest of the proof is now analogous to the first case.

\end{proof}

	\bibliography{ms}
	\bibliographystyle{amsalpha}
\end{document}